\newtheorem{thm}{Theorem}
\newtheorem{prop}[thm]{Proposition}
\newtheorem{lem}[thm]{Lemma}
\newtheorem{con}[thm]{Conjecture}
\newtheorem{claim}[thm]{Claim}
\theoremstyle{remark}
\newtheorem{rem}[thm]{Remark}
\theoremstyle{definition}
\newcommand{\C}{\mathbb C}
\newcommand{\R}{\mathbb R}
\newcommand{\Z}{\mathbb Z}
\newcommand{\HH}{\mathbb H}
\newcommand{\Id}{\mathrm{Id}}
\newcommand{\End}{\mathrm{End}}
\newcommand{\Diff}{\mathrm{Diff}}
\newcommand{\T}{\mathcal T}
\newcommand{\E}{\mathrm{e}}
\newcommand{\I}{\mathrm{i}}
\newcommand{\QDL}{{\mathrm{QDL}}}
\newcommand{\Tr}{\operatorname{\mathrm{Trace}}}
\newcommand{\vol}{\operatorname{\mathrm{vol}}}
\newcommand{\SL}{\mathrm{SL}_2(\C)}
\newcommand{\PSL}{\mathrm{PSL}_2(\C)}
\newcommand{\SSS}{\mathcal{K}^q}
\newcommand{\XX}{\mathcal X_{\SL}}
\newcommand{\XP}{\mathcal X_{\PSL}}
\newcommand{\ZZ}{\mathcal Z^{q^{\frac14}}}
\newcommand{\CP}{\mathbb{CP}}
\newcommand{\dbar}{/\negthinspace\negthinspace/}
\newcommand{\hyp}{\mathrm{hyp}}
\newcommand{\vbar}{\,|\,}
\renewcommand{\leq}{\leqslant}
\renewcommand{\geq}{\geqslant}
\renewcommand{\phi}{\varphi}
\renewcommand{\epsilon}{\varepsilon}
\renewcommand{\Im}{\operatorname{\mathfrak{Im}}}
\title
[Asymptotics of quantum invariants of surface diffeomorphisms I]
{Asymptotics of quantum invariants\\ of surface diffeomorphisms I:
\\
Conjecture and algebraic computations}
\author{Francis Bonahon}
\address {Department
of Mathematics,  University of
Southern California, Los Angeles
CA~90089-2532, U.S.A.}
\email{fbonahon@usc.edu}
\address {Department of Mathematics,  Michigan State University, 
East Lansing MI 48824, U.S.A.}
\email{bonahonf@msu.edu}
\urladdr{https://dornsife.usc.edu/francis-bonahon/}
\author{Helen Wong}
\address {Department of Mathematics,  Claremont McKenna College, 
Claremont CA 91711, U.S.A.}
\email{hwong@cmc.edu}
\urladdr{https://sites.google.com/view/helenwong/}
\author{Tian Yang}
\address {Department of Mathematics,  Texas A\&M University,
College Station TX 77843, U.S.A.}
\email{tianyang@math.tamu.edu}
\urladdr{https://www.math.tamu.edu/~tianyang/}
\date{\today}
\thanks{This work was partially supported by the grants DMS-1711297, DMS-2005656 (PI: Francis Bonahon), DMS-1841221, DMS-1906323 (PI: Helen Wong) and DMS-1812008 (PI: Tian Yang) from the US National Science Foundation.}
\begin{document}
\begin{abstract}
The Kashaev-Murakami-Murakami Volume Conjecture connects the hyperbolic volume of a knot complement to the asymptotics of certain evaluations of the colored Jones polynomials of the knot. We introduce a closely related conjecture for diffeomorphisms of surfaces, backed up by numerical evidence. The conjecture involves isomorphisms between certain representations of the Kauffman bracket skein algebra of the surface, and the bulk of the article is devoted to the development of explicit methods to compute these isomorphisms. These combinatorial and algebraic techniques are exploited in two subsequent articles, which prove the conjecture for a large family of diffeomorphisms of the one-puncture torus and are much more analytic. 
\end{abstract}

\maketitle

\tableofcontents

\section*{Introduction}

This work is motivated by the Kashaev Volume Conjecture \cite{Kash3}, as rephrased by Murakami-Murakami \cite{MurMur}, which connects the asymptotics of the $n$--th colored Jones polynomial $J_K^{(n)}(q) \in \Z[q^{\pm1}]$ of a knot $K\subset S^3$ to the volume $\vol_\hyp(S^3-K)$ of the complete hyperbolic metric of its complement (if it exists). More precisely, the conjecture is that
$$
\lim_{n\to \infty} \frac 1n \log \left\lvert J_K^{(n)}\big( \E^{\frac{2\pi\I}n} \big) \right\rvert = \frac1{2\pi} \vol_\hyp(S^3-K). 
$$
This attractive conjecture, combining two very different areas of low-dimensional topology and geometry, has generated much work in the past twenty years, much of it on the combinatorial side. 

The current series of three articles, consisting of this one and of its companions \cite{BWY2, BWY3}, aims at developing analytic and geometric tools to attack this Kashaev-Murakami-Murakami Conjecture. With this goal in mind, it introduces another conjecture, which has the deceptive appearance of being only 2--dimensional, but similarly relates the asymptotics of purely combinatorial invariants to 3--dimensional hyperbolic volumes. The reader familiar with Kashaev's original approach \cite{Kash1, Kash2}, as rigorously justified by Baseilhac-Benedetti \cite{BasBen1, BasBen2, BasBen3}, may  recognize the filiation of our conjecture. The conjecture is proved for a large family of diffeomorphisms of the one-puncture torus in \cite{BWY3}

This new conjecture involves relatively recent work \cite{BonWon3, FroKanLe1, GanJorSaf} on the representation theory of the Kauffman bracket skein algebra $\SSS(S)$ of an oriented surface $S$. This algebra $\SSS(S)$ was introduced \cite{Tur2, PrzS, BFK2} as a quantization of the character variety $\XX(S)$ formed by the characters of group homomorphisms $\pi_1(S) \to \SL$ or, equivalently, consisting of flat $\SL$--bundles over $S$. From a physical point of view, a quantization of $\XX(S)$ is actually a  \emph{representation} of $\SSS(S)$, usually over a Hilbert space. When the quantum parameter $q = \E^{2\pi \hbar}$ is a root of unity, $\SSS(S)$ turns out to have a rich finite-dimensional representation theory. In particular, the results of \cite{BonWon3, FroKanLe1, GanJorSaf} essentially establish a one-to-one correspondence between irreducible finite-dimensional representations of $\SSS(S)$ on the one hand, and on the other hand classical points in the character variety $\XX(S)$ endowed with the data, at each puncture $v$ of the surface, of a scalar weight $p_v\in\C$ which can only take finitely many values; see \S \ref{subsect:KauffmanReps} for precise (and more accurate) statements. 

If we are given an orientation-preserving diffeomorphism $\phi \colon S \to S$, it acts on the character variety $\XX(S)$ and on the skein algebra $\SSS(S)$.
This action usually has many fixed points, corresponding to points of the character variety $\XX(M_{\phi, r})$ of the mapping torus $M_{\phi, r}$. Indeed, recall that the \emph{mapping torus} $M_{\phi, r}$ of $\phi \colon S \to S$ is obtained from $S\times [0,1]$ by gluing each point $(x,1)$ to $(\phi(x),0)$. An easy property (see \S \ref{subsubsect:InvariantCharacters}) is that an irreducible character $[r]\in \XX(S)$ is $\phi$--invariant if and only if it extends to a character $[\hat r] \in \XX(M_{\phi, r})$. In particular, when $\phi$ is pseudo-Anosov, the monodromy $\pi_1(M_{\phi, r}) \to \PSL$ of the complete hyperbolic metric of $M_{\phi, r}$ \cite{ThuBAMS, Otal, Otal2} provides several $\phi$--invariant characters $[r_\hyp] \in \XX(S)$ (differing by elements of $H^1(M_{\phi, r}; \Z/2)$). The points of  $\XX(M_{\phi, r})$ that are near such a hyperbolic character $[r_\hyp]$ restrict to a complex $c$--dimensional family of other $\phi$--invariant characters $[r] \in \XX(S)$, where $c$ is the number of orbits of the action of $\phi$ on the set of punctures of $S$. See the discussion in \S \ref{subsubsect:InvariantCharacters}.

If we are given a $\phi$--invariant character $[r] \in \XX(S)$ and  $\phi$--invariant punctured weights $p_v$ that are compatible with $[r]$, the classification mentioned above provides an irreducible representation $\rho \colon \SSS(S) \to \End(V)$ that is invariant under the action of $\phi$ \emph{up to isomorphism}. We focus attention on the corresponding isomorphism $\Lambda^q_{\phi, r} \colon V\to V$, normalized so that $\left| \det \Lambda^q_{\phi, r} \right| =1$. Then, the modulus $\left| \Tr \Lambda^q_{\phi, r}  \right| \in \R$ depends only on the diffeomorphism $\phi$, the $\phi$--invariant character $[r] \in \XX(S)$, the root of unity $q$ and the $\phi$--invariant puncture weights $p_v$ (see Proposition~\ref{prop:KauffmanIntertwinerDefined}).

\begin{con}
\label{con:VolConfIntro}
For a pseudo-Anosov diffeomorphism $\phi \colon S \to S$, choose a $\phi$--invariant character $[r] \in \XX(S)$ that, in the fixed point set of the action of $\phi$ on $\XX(S)$,  is in the same component as a hyperbolic character $[r_\hyp]$. For every odd integer $n$, let the quantum parameter be $q=\E^{\frac{2\pi \I}n}$ and let the $\phi$--invariant puncture weights $p_v$ be consistently chosen in terms of $n$, in a sense precisely defined in \S {\upshape\ref{subsect:VolConjSurfaceDiffeos}}. Then, for the above isomorphism  $\Lambda^q_{\phi, r} \colon V\to V$,
$$
\lim_{n\to \infty} \frac1n \log \left| \Tr \Lambda^q_{\phi, r}  \right| = \frac1{4\pi} \vol_\hyp (M_{\phi, r})
$$
where $\vol_\hyp (M_{\phi, r})$ is the volume of the complete hyperbolic metric of the mapping torus $M_{\phi, r}$. 
\end{con}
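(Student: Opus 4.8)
The plan is to follow the general template of the Kashaev--Murakami--Murakami program, transplanted from knots to surface diffeomorphisms, with the algebraic machinery built in the body of this paper playing the role that the $R$--matrix calculus plays there. First I would fix a suitable ideal triangulation (or pants decomposition) $\lambda$ of $S$ and write the pseudo-Anosov $\phi$ as a composition of elementary moves: a finite sequence of diagonal exchanges carrying $\lambda$ to $\phi(\lambda)$, followed by the triangulation isomorphism induced by $\phi$ itself. By the explicit description of the Kauffman bracket representations and of the intertwiners attached to such elementary moves developed in this article, the global intertwiner $\Lambda^q_{\phi,r}$ then becomes an ordered product of elementary intertwiners. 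Choosing bases adapted to $\lambda$ and taking the trace expresses $\bigl|\Tr\Lambda^q_{\phi,r}\bigr|$ as the modulus of a finite sum over ``states'' --- tuples of $n$--th roots of unity indexing the basis vectors along the sequence of moves --- of products of matrix coefficients, each coefficient being a value of the quantum dilogarithm $\Lin$ up to explicit Gaussian and monomial factors.

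The second step is asymptotic. Writing a state as $n$ times a point $z$ in a bounded region and inserting the large-$n$ asymptotics of the quantum dilogarithm, $\Lin\sim\exp\bigl(\tfrac{n}{2\pi\I}\li(\,\cdot\,)+O(1)\bigr)$, the general term of the sum acquires the form $\exp\bigl(n\,\Phi(z)+o(n)\bigr)$ for a potential function $\Phi$ assembled from Rogers dilogarithms, one per elementary move, into which the $\phi$--invariant puncture weights $p_v$ enter as boundary parameters. A saddle-point (Laplace) analysis should then yield
\[
\lim_{n\to\infty}\frac1n\log\bigl|\Tr\Lambda^q_{\phi,r}\bigr|=\max_{\partial\Phi(z)=0}\Re\Phi(z),
\]
after the appropriate normalization of $\Phi$. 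Finally I would identify this critical value with the volume: the critical-point equations $\partial\Phi(z)=0$ should coincide, through the change of variables relating skein-theoretic coordinates to tetrahedral shape parameters, with Thurston's gluing and completeness equations for the canonical ideal triangulation of the mapping torus $M_{\phi,r}$ --- the one obtained by layering one tetrahedron per elementary move over $S\times[0,1]$ and closing up by $\phi$. The hypothesis that $[r]$ lies in the component of $[r_\hyp]$ selects the geometric solution of these equations, and the Neumann--Zagier/Yoshida identification of the Bloch--Wigner dilogarithm with the volume of an ideal tetrahedron shows that the corresponding critical value equals $\tfrac1{4\pi}\vol_\hyp(M_{\phi,r})$, the factor $\tfrac14$ reflecting the normalization of the Kauffman bracket and of the puncture weights.

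The hard part is entirely analytic, which is why it is deferred to the much more analytic companion articles \cite{BWY2,BWY3} and carried out there only for a large family of diffeomorphisms of the one-puncture torus. Two difficulties dominate. First, one must prove that among all critical points of $\Phi$ the geometric one actually realizes the maximum of $\Re\Phi$; this is the notorious ``the geometric solution dominates'' phenomenon, which fails for the naive potential and forces a careful choice of summation or integration contour together with a steepest-descent or Morse-theoretic argument. Second, the passage from the finite sum to the Laplace integral must be controlled uniformly in $n$: the quantum dilogarithm degenerates near the unit circle, the index set is a lattice rather than a continuum (so one must either invoke Poisson summation or estimate directly), and the tails of the sum must be shown to be negligible. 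Carrying out these estimates rigorously, even in the one-puncture torus case, is where the real work lies.
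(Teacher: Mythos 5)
The statement you were asked to ``prove'' is Conjecture~\ref{con:VolConfIntro}, which the paper explicitly states as a conjecture and does not prove. The body of this article develops only the algebraic machinery needed to make the trace $\Tr\Lambda^q_{\phi,r}$ computable---Theorem~\ref{thm:CheFockAndKauffmanIntertwiners}, Lemma~\ref{lem:OrganizePhi}, Proposition~\ref{prop:IntertwinerCompElementaryIntertwiner}---while the asymptotic analysis is explicitly deferred to \cite{BWY2, BWY3} and is carried out there only for a family of diffeomorphisms of the one-puncture torus. So there is no proof in the paper with which to compare your proposal; you cannot prove the statement here, and to your credit you recognize this.

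As a program, your sketch matches the intended strategy and is consistent with what the paper actually sets up. Your Step~1 (decompose $\phi$ into elementary moves, factor $\Lambda^q_{\phi,r}$ into elementary intertwiners and express the trace as a finite sum over states, each term a product of quantum dilogarithm values times Gaussian and monomial factors) is precisely what \S\ref{sect:ComputeIntertwinerWithCheFock}--\S\ref{sect:IntertwinersPunctTorus} establish via the Chekhov--Fock algebra and the $L_{uv}$, $R_{uv}$, $T_{l_0m_0n_0}$ operators; compare the explicit state-sum formula in \S\ref{subsect:ExplicitComputation}. Your Steps~2--3 (Laplace/saddle-point asymptotics, identification of the critical point equations with Thurston's gluing and completeness equations for the layered triangulation of $M_{\phi,r}$, and the Bloch--Wigner/Neumann--Zagier identification of the critical value with the hyperbolic volume) are the expected analytic continuation of the program; none of it is carried out in this paper. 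Two small caveats worth flagging: the paper's experimental section and the remark after the conjecture make clear that the hypothesis ``$[r]$ in the same component as $[r_\hyp]$'' is needed not to select the geometric critical point as you suggest, but to rule out a complete cancellation of leading terms (cf.\ Figure~\ref{fig:LLRother} and the correction factors $\widehat l_0,\widehat m_0,\widehat n_0$ of \S\ref{subsect:CorrectionFactors}), a subtlety your sketch does not anticipate; and the bimodal convergence observed in Figure~\ref{fig:LLR} already shows that the lattice-to-integral passage is delicate in exactly the way you warn about. Overall your proposal is an accurate description of the program, correctly identified as incomplete, but it is not a proof, and neither is anything in this paper.
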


In particular, the predicted limit is independent of the $\phi$--invariant character $[r]\in \XX(S)$ and puncture weights $p_v \in \C$. However, the numerical evidence of \S \ref{sect:Experimental} (and results of \cite{BWY2, BWY3}) shows that these impact the mode of convergence. 

The hypothesis that $[r]$ is in the same component of the fixed point set of $\phi$ as a hyperbolic character $[r_\hyp]$ is often unnecessary. However, it is required by surprising combinatorial cancellations, discovered in \cite{BWY3}, that can occur  for very specific diffeomorphisms $\phi$. 

The current article is devoted to the algebraic and geometric properties underlying this conjecture, while the subsequent papers \cite{BWY2, BWY3} are much more analytic. 

First, we carefully set up the conjecture in \S \ref{sect:VolConf}.

We then offer some quick numerical evidence for this conjecture in \S \ref{sect:Experimental}, at least when $S$ is the one-puncture torus. 

The bulk of the article,  in \S \ref{sect:ComputeIntertwinerWithCheFock}, is motivated by the fact that the results of  \cite{BonWon3, FroKanLe1, GanJorSaf} are rather abstract, and do not lend themselves well to explicit computations. This can be traced back to the fact that, although the quantization of the character variety $\XX(S)$ by the Kauffman bracket skein algebra $\SSS(S)$ is very intrinsic, it is often hard to work with in practice. When the surface $S$ has at least one puncture, there is a related quantization of $\XX(S)$ provided by the quantum Teichm\"uller space of Chekhov-Fock \cite{CheFoc1, CheFoc2, Liu, BonLiu}. It is much less intrinsic, but it has the great advantages that it is relatively explicit, that it  therefore lends itself better to computations, and that it is also closely related to 3--dimensional hyperbolic geometry. Theorem~\ref{thm:CheFockAndKauffmanIntertwiners} connects the intertwining isomorphism $\Lambda_{\phi, r}^q$ to a similar intertwiner introduced in \cite{BonLiu} in the representation theory of the quantum Teichm\"uller space, which can explicitly be computed, at least in theory.

In \S \ref{sect:IntertwinersPunctTorus}, we fully implement these computations for the one-punctured torus. The corresponding results, and their connection to the geometry of the mapping torus $M_{\phi, r}$, end up playing a critical role in  \cite{BWY2, BWY3}. These computations will enable us to prove Conjecture~\ref{con:VolConfIntro} for one very  specific example in \cite{BWY2}, and for many more diffeomorphisms of the one-puncture torus in \cite{BWY3}. 

In the last section \S \ref{sect:ComputeIntertwinerGeneralSurf}, we briefly show how to carry out a similar program for all surfaces (at the expense of an increased computational complexity).

\section{The Volume Conjecture for surface diffeomorphisms}
\label{sect:VolConf}

\subsection{The $\SL$--character variety and the Kauffman bracket skein algebra of a surface}
\label{subsect:KauffmanReps}
Many notions in quantum algebra and quantum topology are noncommutative deformations of a (commutative) algebra of functions over a geometric object, and depend on a parameter $q = \E^{2\pi \I \hbar}$. For instance, the \emph{Kauffman bracket skein algebra} $\SSS(S)$ of an oriented surface $S$ is a deformation of the algebra of regular functions over the $\SL$--character variety
$$
\XX(S) = \{ r \colon \pi_1(S) \to \SL \} \dbar \SL
$$
consisting of group homomorphisms from the fundamental group $ \pi_1(S) $ to the algebraic group $ \SL$ considered (in the sense of geometric invariant theory) up to conjugation by elements of $\SL$ \cite{Tur2, PrzS, BFK2}.

A general phenomenon is that, when the quantum parameter $q$ is a root of unity, a point in the geometric object usually determines an irreducible finite-dimensional representation of the associated quantum object, up to finitely many well-understood choices. As a consequence, an ``interesting'' geometric situation determines a finite but high-dimensional representation of an algebraic object, which carries a lot of information and from which invariants can be extracted. This principle was explicitly stated for the quantum Teichm\"uller space of a surface in \cite{BonLiu}, but it also occurs in many other contexts such as quantum cluster algebras \cite{BerZel, FocGon1}, quantum cluster ensembles \cite{FocGon2} or quantum character varieties \cite{GanJorSaf}. 

We will here restrict attention to the case of the skein algebra $\SSS(S)$ of an oriented surface $S$ of finite topological type, and rely on the results of \cite{BonWon2, FroKanLe1, GanJorSaf}. The precise definition of the Kauffman bracket skein algebra $\SSS(S)$ will not be important for our purposes. We will just say that it involves the consideration of framed links in the 3--dimensional thickening $S \times [0,1]$ of the surface $S$, considered modulo certain relations, the most important of which is the \emph{Kauffman bracket skein relation} that
$$
K_1 = q^{\frac12} K_0 + q^{-\frac12} K_\infty
$$
whenever the three links $K_1$, $K_0$ and $K_\infty\subset S\times [0,1]$ differ only in a little ball where they are as represented on Figure~\ref{fig:SkeinRelation}. 
In particular, although this is not reflected in the notation, the skein algebra $\SSS(S)$ depends on the choice of a square root $q^{\frac12}$ for the quantum parameter $q$. 

\begin{figure}[htbp]
\SetLabels
( .5 * -.4 ) $K_0$ \\
( .1 * -.4 )  $K_1$\\
(  .9*  -.4) $K_\infty$ \\
\endSetLabels
\centerline{\AffixLabels{\includegraphics{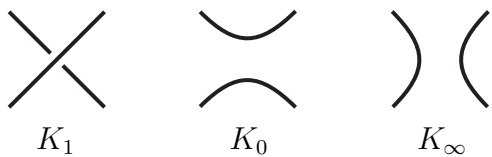}}}
\vskip 15pt

\caption{The Kauffman bracket skein relation.}
\label{fig:SkeinRelation}
\end{figure}

We consider representations of the skein algebra $\SSS(S)$, namely algebra homomorphisms $\rho \colon \SSS(S) \to \End(V)$ from $\SSS(S)$ to the algebra of linear endomorphisms of a finite dimensional vector space $V$ over $\C$.

\begin{thm}
 [\cite{BonWon3, BonWon4, BonWon5}]
 \label{thm:ClassicalShadow}
 Let the quantum parameter $q$ be a primitive $n$--root of unity with $n$ odd, and let the square root $q^{\frac12}$ occurring in the definition of $\SSS(S)$ be chosen so that $\big( q^{\frac12} \big)^n = -1$.  Then, an irreducible representation $\rho \colon \SSS(S) \to \End(V)$ uniquely determines
\begin{enumerate}
 \item a  character $[r_\rho] \in \XX(S)$, represented by a group homomorphism $r_\rho \colon \pi_1(S) \to \SL$; 
 \item a weight $p_v \in \C$ associated to each puncture $v$ of $S$ such that, if  $T_n(X)\in \Z[X]$ denotes the $n$--th Chebyshev polynomial of the first type, $T_n(p_v) = - \Tr r_\rho (\alpha_v)$ when $\alpha_v\in \pi_1(S)$ is represented by a small loop going once around $v$. 
\end{enumerate}

Conversely, every data of a character $[r] \in \XX(S)$ and of puncture weights  $p_v \in \C$  satisfying the above condition is realized by an irreducible representation $\rho \colon \SSS(S) \to \End(V)$.
\end{thm}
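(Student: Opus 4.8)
The argument splits according to the two directions of the statement, and both rest on one structural input about skein algebras at odd roots of unity: the \emph{Chebyshev--Frobenius homomorphism} and the centrality of peripheral loops, developed in \cite{BonWon3, BonWon4, BonWon5}. First I would recall that at $q=-1$ the commutative skein algebra $\K^{-1}(S)$ is canonically isomorphic to the coordinate ring $\C[\XX(S)]$ by the theorems of Bullock and Przytycki--Sikora, the isomorphism sending a framed knot $K$ to the regular function $[r]\mapsto -\Tr r(K)$. When $q$ is a primitive $n$--th root of unity with $n$ odd and $(q^{\frac12})^n=-1$, threading the $n$--th Chebyshev polynomial $T_n$ along the components of framed links defines an algebra homomorphism $F_n\colon \K^{-1}(S)\to\SSS(S)$ whose image lies in the center of $\SSS(S)$; moreover each peripheral loop $[\alpha_v]$ is already central in $\SSS(S)$, and $F_n$ carries the classical class of $\alpha_v$ to $T_n\big([\alpha_v]\big)$. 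Finally $\SSS(S)$ is module-finite over a central subalgebra, hence an affine PI algebra whose irreducible representations are all finite-dimensional, so the framework of the theorem makes sense.

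For the ``uniqueness'' half, take an irreducible $\rho\colon\SSS(S)\to\End(V)$. By Schur's lemma the central image $\rho\bigl(F_n(\K^{-1}(S))\bigr)$ consists of scalars, so $\rho\circ F_n$ is an algebra homomorphism $\C[\XX(S)]\to\C$, that is, a point $[r_\rho]\in\XX(S)$, clearly the only candidate for (1). Likewise $\rho([\alpha_v])$ is a scalar, which I would take as the weight $p_v$ (sign fixed by the conventions above). Applying $\rho$ to $F_n\bigl([\alpha_v]_{\mathrm{cl}}\bigr)=T_n\big([\alpha_v]\big)$ then gives the required relation: the left side evaluates the function $-\Tr r(\alpha_v)$ at $[r_\rho]$ and the right side is $T_n(p_v)$, so $T_n(p_v)=-\Tr r_\rho(\alpha_v)$. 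Uniqueness is automatic since $[r_\rho]$ and the $p_v$ are read off $\rho$.

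For the ``existence'' half, the substantive part, I would realize the prescribed data via the quantum Teichm\"uller space, assuming $S$ has a puncture (the closed case requiring a separate argument). Fixing an ideal triangulation $\lambda$ of $S$, the quantum trace homomorphism of Bonahon--Wong embeds $\SSS(S)$ into the balanced Chekhov--Fock algebra $\mathcal Z^q(\lambda)$, a quantum torus whose root-of-unity representation theory is completely explicit: every central character is realized by an irreducible representation, and the center of $\mathcal Z^q(\lambda)$ is identified, through Thurston's shear coordinates, with functions on (an open part of) $\XX(S)$, the edge monomials around a puncture $v$ recording exactly the choice among the $n$ roots $p_v$ of $T_n(p_v)=-\Tr r(\alpha_v)$. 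Given $[r]$ and compatible weights $p_v$, I would choose an irreducible representation of $\mathcal Z^q(\lambda)$ with the matching central character, restrict it to $\SSS(S)$ along the quantum trace, and pass to an irreducible subquotient $\rho$; a commutative square relating the quantum trace to the classical trace (compatibly with $F_n$) then identifies the classical shadow of $\rho$ with $[r]$, and the quantum-trace image of $[\alpha_v]$ is the central monomial whose $\rho$--eigenvalue is $p_v$.

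The hard part is this last step. One must check that restricting an irreducible of the quantum torus along the \emph{non-surjective} quantum trace neither destroys the prescribed invariants nor spreads them over several subquotients, that all $n$ roots $p_v$ are attained, and --- most delicately --- that the construction still works at the non-generic points of $\XX(S)$ (reducible characters, and characters where $\dim V$ jumps), where the Azumaya-locus arguments handling the generic case break down. Essentially all of the work in \cite{BonWon3, BonWon4, BonWon5} lives here.
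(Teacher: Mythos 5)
The paper does not prove this result; Theorem~\ref{thm:ClassicalShadow} is cited verbatim from [BonWon3, BonWon4, BonWon5], so there is no in-paper proof to compare against. Your reconstruction faithfully captures the architecture of the cited references: the ``uniquely determines'' direction is the Chebyshev--Frobenius/central-character argument of [BonWon3], using that $(q^{\frac12})^{n^2}=-1$ when $n$ is odd so the source of the threading map is $\K^{-1}(S)\cong\C[\XX(S)]$, centrality of the Chebyshev image and of peripheral loops, and Schur's lemma; the realization direction is the quantum-trace construction through the balanced Chekhov--Fock algebra from [BonWon4, BonWon5], exactly as you sketch. One observation worth making explicit: your passage to an irreducible subquotient at the end of the existence step is genuinely necessary, not a hedge. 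The remark in the paper immediately after Theorem~\ref{thm:SkeinRepUnique} points out that [BonWon5] does \emph{not} establish irreducibility of the quantum-trace restriction; the reason a subquotient still carries the prescribed $[r]$ and $p_v$ is precisely that these invariants are read off central elements (Chebyshev images and peripheral loops), which act by the same scalar on every composition factor of the restriction. Also consistent with the paper's scope is your caveat about the closed case: from \S\ref{sect:ComputeIntertwinerWithCheFock} onward the paper assumes $S$ admits an ideal triangulation, so only the punctured case is ever used downstream.
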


See \cite{Le} for versions of Theorem~\ref{thm:ClassicalShadow} where $n$ is allowed to be even. 

\begin{thm}
[\cite{FroKanLe1, GanJorSaf, Fro}]
\label{thm:SkeinRepUnique}
Suppose that $[r] $ is in the smooth part of $ \XX(S)$ or, equivalently, that it is realized by an irreducible homomorphism $r \colon \pi_1(S) \to \SL$. Then the irreducible  representation  $\rho \colon \SSS(S) \to \End(V)$ whose existence is asserted by the second part of Theorem~{\upshape\ref{thm:ClassicalShadow}} is unique up to isomorphism of representations.  This representation has dimension $\dim V=n^{3g+p-3}$ if $S$ has genus $g$ and $p$ punctures. 
\end{thm}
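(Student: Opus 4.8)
The plan is to reduce the statement to the fact, established in \cite{FroKanLe1, GanJorSaf}, that $\SSS(S)$ is an Azumaya algebra over the smooth part of its character variety, and then to extract the dimension $n^{3g+p-3}$ from the quantum trace homomorphism. Recall first that \ref{thm:ClassicalShadow} produces, inside the center of $\SSS(S)$, the image of the Chebyshev--Frobenius homomorphism (an isomorphic copy of $\C[\XX(S)]$, using that $\big(q^{\frac12}\big)^n=-1$) together with the puncture elements $z_v$, which are central because a small loop around $v$ can be isotoped off any link. Write $Z_0$ for the central subalgebra they generate; a pair consisting of a character $[r]\in\XX(S)$ and compatible puncture weights $p_v$ is precisely a $\C$--algebra homomorphism $\chi\colon Z_0\to\C$, and an irreducible representation of $\SSS(S)$ with that classical shadow is precisely an irreducible module over the finite-dimensional $\C$--algebra $A_\chi=\SSS(S)\otimes_{Z_0}\C_\chi$ (note $\SSS(S)$ is a finite $Z_0$--module). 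So it suffices to show that, when $[r]$ is smooth, $A_\chi$ is a matrix algebra $\End(V)$ with $\dim V=n^{3g+p-3}$.

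For the matrix-algebra claim I would first record that $Z_0$ is, up to nilpotents, the whole center of $\SSS(S)$, and that $\mathrm{MaxSpec}\,Z_0\to\XX(S)$ is étale over the smooth locus: over a smooth character $[r]$ the fiber is the set of tuples $(p_v)$ with $T_n(p_v)=-\Tr r(\alpha_v)$, and for $[r]$ outside a proper subvariety the polynomials $T_n(X)+\Tr r(\alpha_v)$ have simple roots. One then invokes the main theorem of \cite{FroKanLe1} (reproved in \cite{GanJorSaf} through the quantum character stack): $\SSS(S)$ is Azumaya over this locus. Consequently $A_\chi$ is a central simple algebra over the field $\C$, hence $A_\chi\cong\End(V)$ by Wedderburn together with $\C=\overline{\C}$; in particular it has a unique irreducible module $V$, and $\dim V$ is the PI--degree $D$ of $\SSS(S)$. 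The remaining smooth characters — those lying over the finitely many exceptional puncture traces $\pm2$ — are then reached by a specialization argument inside the Azumaya locus, or handled directly as in \cite{Fro}.

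To identify $D$, choose an ideal triangulation $\Delta$ of $S$ (possible since $S$ has a puncture, apart from a couple of sporadic cases) and use the quantum trace homomorphism of Bonahon--Wong and Lê, which realizes $\SSS(S)$ inside the Chekhov--Fock quantum torus $\T^q_\Delta$ compatibly with passage to fraction fields; hence $\SSS(S)$ and $\T^q_\Delta$ have equal PI--degree. Now $\T^q_\Delta$ is a quantum torus on the $E=6g+3p-6$ edges of $\Delta$, with defining skew matrix the Chekhov--Fock matrix, whose kernel is spanned by the $p$ peripheral monomials; so that matrix has rank $E-p=6g+2p-6$, and the PI--degree of a quantum torus at a primitive $n$--th root of unity is $n$ raised to half the rank of its skew form. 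Thus $D=n^{\frac12(6g+2p-6)}=n^{3g+p-3}$, as asserted. (For $p=0$ one instead cuts $S$ along a non-separating curve, or appeals again to \cite{Fro}.)

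The one genuinely hard input is the Azumaya property; everything else is bookkeeping around it. In \cite{FroKanLe1} it is obtained by trivializing $\SSS(S)$, over a dense open subset of $\mathrm{MaxSpec}\,Z_0$, as a sheaf of matrix algebras built from the image of the quantum trace map — which requires pinning down exactly where the comparison between $\SSS(S)$ and the balanced Chekhov--Fock algebra is a genuine isomorphism rather than merely a birational one, and checking that this good open set contains the smooth characters. That comparison, together with the centrality and étaleness statements above, are the delicate points; the dimension count is an easy corollary once the quantum-trace embedding is in hand.
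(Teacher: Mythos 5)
The paper does not prove Theorem~\ref{thm:SkeinRepUnique}; it is quoted from the literature, with the surrounding paragraph explicitly delegating the work: uniqueness for generic $[r]$ is from \cite{FroKanLe1}, and the extension to all irreducible characters is from \cite{GanJorSaf, Fro}. So there is no in-paper proof to compare against. That said, your sketch is a faithful outline of how those references establish the result: the Chebyshev--Frobenius image plus the peripheral skeins generate the central subalgebra $Z_0$, a classical shadow is a character of $Z_0$, the Azumaya property over the smooth/irreducible locus turns the fiber algebra $A_\chi$ into $\End(V)$, and the dimension is the PI--degree, extracted via the quantum trace embedding and the Chekhov--Fock skew form (rank $6g+2p-6$, hence $n^{3g+p-3}$). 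That is the intended argument.

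Two small imprecisions worth flagging. First, \cite{GanJorSaf} and \cite{Fro} are not mere reproofs of \cite{FroKanLe1}: they genuinely enlarge the Azumaya locus from a dense open set to the full irreducible locus, and it is exactly this enlargement — not a separate ``specialization argument'' — that takes care of the exceptional fibers you mention (characters whose puncture traces are $\pm 2$, where $T_n(x) + \Tr r(\alpha_v)$ has repeated roots and $\mathrm{MaxSpec}\,Z_0 \to \XX(S)$ ramifies). Second, the quantum trace does not land in $\T^q_\Delta$ itself but in the balanced square-root algebra $\ZZ_\Delta \subset \T^{q^{1/4}}_\Delta$, and it is the PI--degree of $\ZZ_\Delta$, not of $\T^q_\Delta$, that one a priori needs; fortunately both equal $n^{3g+p-3}$ (this is Propositions~\ref{prop:RepsCheFock} and~\ref{prop:RepsBalancedCheFock} of the paper), so your final count is correct, but the intermediate reasoning should pass through the balanced algebra. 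Neither issue affects the substance; as you say, the genuinely hard input is the Azumaya theorem, and the rest is bookkeeping.
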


The article \cite{FroKanLe1}  proves uniqueness for generic $[r] \in \XX(S)$, and this property is improved in \cite{GanJorSaf, Fro} to include all irreducible  characters. The weaker uniqueness property of \cite{BonWon5}, which associates a unique representation of $\SSS(S)$ to each irreducible $[r] \in \XX(S)$ and compatible puncture weights (without proving irreducibility, or excluding the existence of other representations), would also be sufficient for our purposes. 

\subsection{Kauffman bracket intertwiners as invariants of surface diffeomorphisms}
\label{subsect:KauffmanIntertwinerInvariantDiffeomorphism}

The combination of Theorems~\ref{thm:ClassicalShadow} and \ref{thm:SkeinRepUnique} shows that a point $[r] $ of the character variety $ \XX(S)$ determines, up to $n^p$ possible choices of puncture weights,  a representation $\rho \colon \SSS(S) \to \End(V)$ of dimension $n^{3g+p-3}$, at least as long as $[r]$ does not belong to the ``bad'' algebraic subset consisting of reducible characters. We will apply this setup to very specific characters $[r]\in \XX(S)$. 

\subsubsection{Characters that are invariant under the action of a diffeomorphism}
\label{subsubsect:InvariantCharacters}

Consider an orientation-preserving  diffeomorphism $\phi \colon S \to S$. It induces a homomorphism $\phi_* \colon \pi_1(S) \to \pi_1(S)$, well-defined up to conjugation by an element of $\pi_1(S)$. It therefore acts on the character variety by $\phi^* \colon \XX(S) \to \XX(S)$, defined by the property that $\phi^* \big( [r] \big) =[r\circ \phi_*]$.

 We are interested in the fixed points of the action of $\phi$ on the smooth part of the character variety $\XX(S)$. These are the characters represented by irreducible homomorphisms $r\colon \pi_1(S) \to \SL$ such that $r \circ \phi_*$ is conjugate to $r$ by an element of $\SL$. We can therefore express this in terms of   the \emph{mapping torus} $M_{\phi, r}$, obtained from $S \times [0,1]$ by gluing $S\times\{1\}$ to $S\times \{0\}$ through $\phi$, and whose fundamental group admits the presentation
$$\pi_1(M_{\phi, r}) = \langle \pi_1(S), t; \,t \alpha t^{-1} = \phi_*(\alpha), \forall \alpha \in \pi_1(S)  \rangle.
$$
Then the irreducible character $[r] \in \XX(S)$ is fixed by the action of $\phi$ precisely when $r$ extends to a homomorphism $r \colon \pi_1(M_{\phi, r}) \to \SL$. 

When $\phi$ is a pseudo-Anosov diffeomorphism, there is a preferred finite family of such $\phi$--invariant characters. Indeed, the mapping torus $M_{\phi, r}$ then admits a unique complete hyperbolic metric \cite{ThuBAMS, Otal, Otal2}. Identifying the group of orientation-preserving isometries of the hyperbolic space $\HH^3$ to $\PSL$, the monodromy of this hyperbolic metric provides a homomorphism $\bar r_\hyp \colon \pi_1(M_{\phi, r}) \to \PSL$, well-defined up to conjugation by an element of $\PSL$, such that $M_{\phi, r}$ is isometric to $\HH^3/ r_\hyp\big(\pi_1(M_{\phi, r})\big)$ by an orientation-preserving isometry.

As an orientable 3--dimensional manifold, $M_{\phi, r}$ is parallelizable and we can use this property to lift $\bar r_\hyp \colon \pi_1(M_{\phi, r}) \to \PSL$ to a homomorphism $ r_\hyp \colon \pi_1(M_{\phi, r}) \to \SL$; the number of such lifts is equal to the cardinal of $H^1(M_{\phi, r}; \Z/2)$. Restricting these lifts $ r_\hyp$ to $\pi_1(S) \subset \pi_1(M_{\phi, r})$, the corresponding characters $[ r_\hyp] \in \XX(S)$ are by construction fixed by the action of $\phi$. The number of the $\phi$--invariant hyperbolic characters $[ r_\hyp] \in \XX(S)$ thus associated to the monodromy $\bar r_\hyp$ is equal to the cardinal of the kernel of the subtraction $H^1(\phi; \Z/2) - \Id_{ H^1(S; \Z/2)}$, where $H^1(\phi;\Z/2) \colon H^1(S; \Z/2) \to  H^1(S; \Z/2)$ denotes the homomorphism induced by $\phi$. 

However, there are many more fixed points, in particular when $S$ has at least one puncture. For instance, near the hyperbolic character $[\bar r_\hyp]$,  the $\PSL$--character variety $\XP(S)$ is smooth with complex dimension  equal to the number $c$ of cusps of $M_{\phi, r}$, by Weil rigidity \cite{Weil1, Weil2, GarRag} applied to $M_{\phi, r}$; see also \cite[\S 5]{ThuNotes}, \cite[\S E.6]{BenPet} or the discussion in \S \ref{subsect:SweepInvariantChar}. Note that the number $c$ of topological ends of $M_{\phi, r}$ is also the number of orbits of the action of $\phi$ on the set of punctures of the surface $S$. Lifting these characters to $\SL$ then provides a complex $c$--dimensional submanifold of $\phi$--invariant characters $[ r] \in \XX(S)$ near each $\phi$--invariant lift  $[ r_\hyp] \in \XX(S)$ of the hyperbolic character  $[ \bar r_\hyp] \in \XP(S)$. 

\subsubsection{Invariants of surface diffeomorphisms}

The diffeomorphism $\phi$ also acts on the skein algebra by $\phi_* \colon \SSS(S) \to \SSS(S)$ in such a way that $\phi_* \big( [K] \big) =\big[(\phi\times\Id_{[0,1]})(K)\big]$ for every element $[K]\in \SSS(S)$ represented by a framed link $K\subset S \times [0,1]$.

If we are given an irreducible $\phi$--invariant character $[r]\in \XX(S)$, and if we choose puncture weights $p_v \in \C$ that are \emph{$\phi$--invariant}  in the sense that $p_{\phi(v)}=p_v$ for every puncture $v$, the uniqueness property of Theorem~\ref{thm:SkeinRepUnique} shows that the  representation $\rho \colon \SSS(S) \to \End(V)$ associated to this data by Theorem~\ref{thm:ClassicalShadow} is isomorphic to the representation $\rho \circ \phi_* \colon \SSS(S) \to \End(V)$.  This means that there exists a linear isomorphism $\Lambda_{\phi, r}^q  \colon V \to V$ such that
$$
(\rho \circ \phi_*)(X) = \Lambda_{\phi, r}^q \circ \rho(X)\circ (\Lambda_{\phi, r}^q)^{-1} \in \End(V)
$$
for every $X \in \SSS(S)$. 

Note that this property is unchanged if we replace  the intertwiner $\Lambda_{\phi, r}^q$ by a scalar multiple. Lacking a better idea, we normalize it so that $ \left| \det \Lambda_{\phi, r}^q \right|=1$. 

\begin{prop}
 \label{prop:KauffmanIntertwinerDefined}
 Let  $\Lambda_{\phi, r}^q  \colon V \to V$ be the above intertwiner, normalized so that $ \left| \det \Lambda_{\phi, r}^q \right|=1$. Then, up to conjugation and  multiplication by a scalar with modulus $1$,   $\Lambda_{\phi, r}^q $ depends only on the diffeomorphism $\phi\colon S \to S $, the $\phi$--invariant  character $[r] \in \XX(S)$, the primitive $n$--root of unity $q$ and the $\phi$--invariant puncture weights $p_v$.
 
 In particular,  the modulus $| \Tr \Lambda_{\phi, r}^q |$  of its trace is uniquely determined by the above data. 
\end{prop}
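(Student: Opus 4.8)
The plan is to isolate the three sources of ambiguity in the construction of $\Lambda_{\phi, r}^q$ — the choice of the representation $\rho$ within its isomorphism class, the choice of intertwiner for a given $\rho$, and the phase left undetermined by the normalization — and to check that each of them affects $\Lambda_{\phi, r}^q$ only by conjugation by an element of $\mathrm{GL}(V)$ or by multiplication by a scalar of modulus $1$.

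First I would fix one representation $\rho\colon \SSS(S) \to \End(V)$ attached to the data $([r], p_v)$, $q$ (with the standing choice of $q^{\frac12}$ satisfying $\big(q^{\frac12}\big)^n=-1$) by Theorems~\ref{thm:ClassicalShadow} and \ref{thm:SkeinRepUnique}, and suppose $\Lambda, \Lambda'\colon V\to V$ both satisfy the intertwining relation $(\rho\circ\phi_*)(X) = \Lambda\circ\rho(X)\circ\Lambda^{-1} = \Lambda'\circ\rho(X)\circ(\Lambda')^{-1}$ for all $X\in\SSS(S)$. Then $\Lambda^{-1}\Lambda'$ commutes with $\rho(X)$ for every $X$, so irreducibility of $\rho$ (Theorem~\ref{thm:SkeinRepUnique}) and Schur's Lemma give $\Lambda' = c\Lambda$ for some $c\in\C^*$. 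The normalization $\left|\det\Lambda\right| = \left|\det\Lambda'\right| = 1$ then forces $|c|^{\dim V}=1$, hence $|c|=1$; so for a fixed $\rho$ the normalized intertwiner is unique up to a modulus-$1$ scalar.

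Next I would check independence of the choice of $\rho$. If $\rho'\colon\SSS(S)\to\End(V')$ is another representation attached to the same data, Theorem~\ref{thm:SkeinRepUnique} provides a linear isomorphism $L\colon V\to V'$ with $\rho'(X) = L\circ\rho(X)\circ L^{-1}$ for all $X$; a one-line computation then shows $L\circ\Lambda\circ L^{-1}$ is an intertwiner for $\rho'$, and $\det\big(L\circ\Lambda\circ L^{-1}\big)=\det\Lambda$, so it is again normalized. By the previous paragraph, every normalized intertwiner for $\rho'$ is a modulus-$1$ multiple of it. Since $\phi_*\colon\SSS(S)\to\SSS(S)$ depends only on the isotopy class of $\phi$, it follows that any two normalized intertwiners produced from admissible choices differ by conjugation in $\mathrm{GL}(V)$ followed by multiplication by a scalar of modulus $1$, which is the first assertion. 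The statement about $\left|\Tr\Lambda_{\phi, r}^q\right|$ is then immediate, since the trace is conjugation-invariant and $|\Tr(c\Lambda)| = |c|\,|\Tr\Lambda| = |\Tr\Lambda|$ whenever $|c|=1$.

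I do not expect a serious obstacle here: the argument is essentially Schur's Lemma plus the elementary behaviour of $\det$ and $\Tr$ under conjugation and rescaling. The only point that genuinely needs care — and which is already used implicitly in the discussion preceding the statement — is the functoriality underlying \emph{existence}: one must note that the classical shadow of $\rho\circ\phi_*$ is obtained from $([r],p_v)$ by applying the automorphism $\phi_*$ of $\pi_1(S)$ and the induced permutation of the punctures, so that $\phi$--invariance of $[r]$ and of the weights $p_v$ makes this coincide with $([r],p_v)$ and lets Theorem~\ref{thm:SkeinRepUnique} yield $\rho\circ\phi_*\cong\rho$. Beyond this bookkeeping, nothing delicate remains.
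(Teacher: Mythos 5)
Your proof is correct and follows essentially the same route as the paper's: isolate the two choices (representative $\rho$ in its isomorphism class, and the intertwiner itself), apply Schur's lemma for the second, check conjugation-equivariance for the first, and observe the invariance of $|\Tr|$ under both operations. The paper's proof is slightly terser but identical in substance.
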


To be specific the uniqueness statement means that, if different intermediate choices lead to another intertwiner $\Lambda_{\phi, r}^{q\, \prime}  \colon V' \to V'$, there exists a linear isomorphism $\Lambda \colon V \to V'$ such that $\Lambda_{\phi, r}^{q\, \prime} = s\, \Lambda \circ \Lambda_{\phi, r}^q \circ \Lambda^{-1}$ for some scalar $s\in \C$ with $|s|=1$. 

\begin{proof} We made two implicit choices: a representative $\rho\colon \SSS(S) \to \End(V)$ in an isomorphism class of representations; and the intertwiner $\Lambda_{\phi, r}^q$. 

If $\rho$ is given, the irreducibility property of this representation implies that the  intertwiner $\Lambda_{\phi, r}^q$ is unique up to multiplication by a nonzero scalar $s \in \C^*$, by Schur's lemma. Our hypothesis that $ \left| \det \Lambda_{\phi, r}^q \right|=1$  then constrains this scalar $s$ to have modulus $1$. 

Replacing $\rho$ by an isomorphic representation $\rho'\colon \SSS(S) \to \End(V')$ will only replace $\Lambda_{\phi, r}^q$ by its conjugate $\Lambda \circ \Lambda_{\phi, r}^q \circ \Lambda^{-1}$, where $\Lambda\colon V\to V'$ is the isomorphism between $\rho$ and $\rho'$. 

This proves the property of uniqueness up to conjugation and  multiplication by scalar with modulus 1. The uniqueness property for $| \Tr \Lambda_{\phi, r}^q |$ immediately follows. 
\end{proof}

\begin{rem}
 We could have required that $ \det \Lambda_{\phi, r}^q $ is exactly equal to 1, in which case the above argument shows that $\Lambda_{\phi, r}^q $ is unique up to conjugation and multiplication by a $d$--root of unity, where $d=n^{3g+p-3}$ is the dimension of $V$. However, we do not know of any good use for this more precise statement at this point. 
\end{rem}

\subsection{The Volume Conjecture for surface diffeomorphisms}
\label{subsect:VolConjSurfaceDiffeos}

The modulus $\left| \Tr \Lambda_{\phi, r}^q \right|$ of Proposition~\ref{prop:KauffmanIntertwinerDefined} depends on the odd integer $n$, the primitive $n$--root of unity $q$, the $\phi$--invariant character $[r] \in \XX(S)$, and the $\phi$--invariant puncture weights $p_v \in \C^*$, constrained for every puncture $v$ by the Chebyshev condition that $T_n(p_v) = - \Tr r(\alpha_v)$ as in Theorem~\ref{thm:ClassicalShadow}. After fixing the diffeomorphism $\phi \colon S \to S$ and the $\phi$--invariant character $[r] \in \XX(S)$, we want to consider the asymptotic behavior of this quantity as $q$ tends to 1, namely as $n$ tends to $\infty$. However, we need to choose the other quantities in a consistent way as functions of $n$. 

For the quantum parameter $q$, we take it to be equal to $q_n = \E^{\frac{2\pi \I}n}$. 

For the puncture weights $p_v$, we need solutions of the equation $T_n(x)=  - \Tr r(\alpha_v)$. An elementary property of the Chebyshev polynomial $T_n$ (see for instance \cite[Lem. 17]{BonWon4}) is that, if we write $\Tr r(\alpha_v) =  -\E^{\theta_v} - \E^{-\theta_v}$ for some $\theta_v \in \C$, the solutions of the equation $T_n(x)=  - \Tr r(\alpha_v)$ are all numbers of the form $x = y +y^{-1}$ where $y$ is an $n$--root of $ \E^{\theta_v} $. In particular, once the numbers $\theta_v$ are chosen, we can take $p_v =  \E^{\frac1n\theta_v} + \E^{-\frac1n \theta_v}$. Since $[r]$ is $\phi$--invariant, we can choose the $\theta_v$ to be $\phi$--invariant in the sense that $\theta_{\phi(v)}=\theta_v$ for every puncture $v$; then the puncture weights $p_v$ will be $\phi$--invariant as well. 

\begin{con} [The Volume Conjecture for surface diffeomorphisms]
\label{con:MainConjecture}
Let $\phi \colon S \to S$ be a pseudo-Anosov diffeomorphism of the surface $S$, and let $[r]\in \XX(S)$ be a $\phi$--invariant character which, in the fixed point set of the action of $\phi$ on $\XX(S)$,  is in the same component as a hyperbolic character $[r_\hyp]$. Choose $\phi$--invariant puncture weights $\theta_v\in \C$ such that $\Tr r(\alpha_v)= -\E^{\theta_v} - \E^{-\theta_v}$ for every puncture $v$, where $\alpha(v) \in \pi_1(S)$ is represented by a loop going once around $v$. With this data, for every odd integer $n$, let $\Lambda_{\phi, r}^{q_n}$ be the intertwiner associated by Proposition~{\upshape\ref{prop:KauffmanIntertwinerDefined}} to the quantum parameter  $q = \E^{\frac{2\pi \I}n}$, the $\phi$--invariant character $[r]\in \XX(S)$ and the $\phi$--invariant puncture weights $p_v =  \E^{\frac1n\theta_v} + \E^{-\frac1n \theta_v}$. Then,
$$
\lim_{n\to \infty}  \frac1n \log \left | \Tr \Lambda_{\phi, r}^{q_n} \right| =\frac1{4\pi}  \vol_\hyp(M_\varphi)
$$
where $ \vol_\hyp(M_\varphi)$ is the volume of the complete hyperbolic metric of the mapping torus $M_{\phi, r}$ of $\phi$. 
\end{con}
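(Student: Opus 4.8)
\medskip
\noindent\emph{Sketch of an approach.}
Since the statement only concerns the exponential growth rate of $\bigl|\Tr\Lambda_{\phi,r}^{q_n}\bigr|$, the normalization $\bigl|\det\Lambda_{\phi,r}^{q_n}\bigr|=1$ plays no role: it merely rescales the intertwiner by a global scalar, whose contribution dies under $\frac1n\log$, so one may use any convenient model for $\Lambda_{\phi,r}^{q_n}$. The plan is to (a) pass from the abstract Kauffman bracket intertwiner to the explicit quantum Teichm\"uller intertwiner of \cite{BonLiu} using Theorem~\ref{thm:CheFockAndKauffmanIntertwiners}; (b) express $\Tr\Lambda_{\phi,r}^{q_n}$ as an explicit state sum whose summands are products of quantum dilogarithm values; (c) run a saddle-point analysis of this state sum as $n\to\infty$; and (d) match the dominant critical point to the complete hyperbolic structure on the mapping torus $M_{\phi,r}$, and the associated critical value to its volume.

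In more detail, I would choose an ideal triangulation $\lambda$ of $S$ together with a sequence of elementary moves --- diagonal flips, and re-indexings of the edges --- taking $\lambda$ to $\phi(\lambda)$. By the results of \cite{BonLiu}, the intertwiner for the action of $\phi$ on the quantum Teichm\"uller space factors as a product of the elementary operators attached to these moves; at the root of unity $q_n=\E^{\frac{2\pi\I}n}$ these act on a finite-dimensional space whose dimension is a power of $n$, each flip being a balanced quantum dilogarithm kernel and each re-indexing a monomial operator, with the constituent data dictated by the Chekhov--Fock (shear) coordinates of $[r]$ and by the puncture weights $p_v=\E^{\frac1n\theta_v}+\E^{-\frac1n\theta_v}$. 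Taking the trace produces an explicit finite sum, whose number of terms is a power of $n$, each term a product of $n$-th quantum dilogarithm values. Feeding in the asymptotic expansion $\log\lih(z)\sim\frac n{2\pi\I}\li(-\E^z)+O(\log n)$, the state sum becomes, to leading exponential order, a Riemann sum for an oscillatory integral with integrand $\E^{\frac n{2\pi\I}W(\mathbf z)}$, where the potential $W$ is assembled from copies of $\li$ together with the quadratic and linear terms coming from the re-indexings and from $[r]$. A steepest-descent argument then identifies $\lim\frac1n\log\bigl|\Tr\Lambda_{\phi,r}^{q_n}\bigr|$ with the largest value of $\bigl|\Im\tfrac1{2\pi}W\bigr|$ over the relevant critical points of $W$. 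By the very construction of the flip operators, the critical-point equations of $W$ are the logarithmic (Neumann--Zagier) form of Thurston's gluing equations for the ideal triangulation of $M_{\phi,r}$ obtained by layering the tetrahedra of the successive flips; the hypothesis that $[r]$ lies in the hyperbolic component of the $\phi$-fixed locus of $\XX(S)$ is exactly what forces the geometric solution --- the shape parameters of the complete hyperbolic metric on $M_{\phi,r}$ --- to be the dominant critical point. Evaluating $W$ there and invoking the standard expression of $\vol_\hyp(M_{\phi,r})$ as a sum of Bloch--Wigner dilogarithms of the tetrahedron shapes then yields the value $\frac1{4\pi}\vol_\hyp(M_{\phi,r})$, the precise constant coming out of careful bookkeeping of the square-root convention in $\SSS(S)$ and of the passage between $\SL$ and $\PSL$. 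For the one-punctured torus, every step of this outline is made fully explicit in \S\ref{sect:IntertwinersPunctTorus}, and underlies the proof carried out in \cite{BWY3}.

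The formal skeleton above is comparatively easy; the real difficulty, and the reason the conjecture is hard, is analytic. One has to (i) justify replacing the discrete state sum by the oscillatory integral with a controlled error; (ii) deform the contour of summation so that it runs through the geometric critical point while avoiding the poles of the quantum dilogarithm kernels; (iii) prove that every other critical point, as well as the tails of the sum, contributes a strictly smaller exponential rate --- it is exactly here that the subtle combinatorial cancellations alluded to after Conjecture~\ref{con:VolConfIntro} can occur for special $\phi$ outside the hyperbolic component, and must be excluded under the stated hypothesis; and (iv) control the subleading $O(\log n)$ and Gaussian contributions precisely enough to extract the $\frac1n\log$ limit. These are the analytic obstacles taken up, for the one-puncture torus, in the companion papers \cite{BWY2, BWY3}.
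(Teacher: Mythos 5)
This statement is a \emph{conjecture}, not a theorem: the paper does not prove it, and it remains open in the generality stated. The paper's role is to formulate the conjecture, give numerical evidence (\S\ref{sect:Experimental}), and develop the algebraic machinery (Theorem~\ref{thm:CheFockAndKauffmanIntertwiners}, \S\ref{sect:IntertwinersPunctTorus}--\ref{sect:ComputeIntertwinerGeneralSurf}) that reduces $\Tr\Lambda_{\phi,r}^{q_n}$ to an explicit state sum; the actual asymptotic analysis is carried out only for special cases of the one-puncture torus in the companion papers \cite{BWY2, BWY3}. Since the paper has no proof of this statement, there is no argument to compare yours against.

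That said, your sketch faithfully reproduces the \emph{intended} strategy: pass from $\Lambda_{\phi,r}^q$ to $\bar\Lambda_{\phi,\bar r}^q$ via Theorem~\ref{thm:CheFockAndKauffmanIntertwiners}, decompose the latter as a product of elementary flip and monomial operators (Proposition~\ref{prop:IntertwinerCompElementaryIntertwiner}), write the trace as a state sum in discrete quantum dilogarithms (\S\ref{subsect:ExplicitComputation}), and perform a steepest-descent analysis in which the critical-point equations are Thurston's gluing equations for the layered triangulation of $M_{\phi,r}$. You also correctly identify the essential role of the ``hyperbolic component'' hypothesis in excluding the cancellations illustrated in Figure~\ref{fig:LLRother}, and you are honest that steps (i)--(iv) --- the contour deformation, pole avoidance, dominance of the geometric critical point, and error control --- constitute the real mathematical content and are not supplied. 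So this is a reasonable research plan, not a proof; presenting it as such is the correct stance for a conjecture the paper itself does not prove.

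One small precision worth adding: the ``straightforward'' replacement of the discrete sum by an oscillatory integral is already delicate here because the summation lattice and the quantum dilogarithm poles both scale with $n$, and because the normalization factors $|D^q(u_k)|^{1/n}$ and the quadratic $q$-exponents interact with the correction integers $\widehat l_0, \widehat m_0, \widehat n_0$ of \S\ref{subsect:CorrectionFactors} in a way that determines whether the leading contributions reinforce or cancel; your step~(iii) subsumes this but it deserves to be flagged as a separate combinatorial obstacle rather than a purely analytic one.
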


Conjecture~\ref{con:MainConjecture} admits a straightforward generalization to the root of unity $q=\E^{\frac{2k\pi\I}n}$ for some fixed integer $k\geq 1$, in which case heuristic and experimental evidence suggest that the limit should be $\frac1{4k\pi}  \vol_\hyp(M_\varphi)$. The case $k=1$ is sufficiently complicated that we will restrict attention to this framework. 

Once the character $[r]\in \XX(S)$ is fixed, the puncture weights $\theta_v$ are determined only modulo $2\pi \I$ (and also up to a change of sign that does not impact the puncture weights $p_v$). Different choices yield different intertwiners $\Lambda_{\phi, r}^q$. The conjecture predicts the same limit for all choices but, as we will see in  \S \ref{sect:Experimental} and \cite{BWY2, BWY3}, these choices do impact the mode of convergence.

\section{Experimental evidence}
\label{sect:Experimental}

 In preliminary work to test Conjecture~\ref{con:MainConjecture}, we developed computer code (running on \emph{Mathematica}\texttrademark) that computes $ \Tr \Lambda_{\phi, r}^q$ for diffeomorphisms $\phi$ of the simplest surface where it applies, the one-puncture torus $S_{1,1}$. 

For the one-puncture torus $S_{1,1}$, the  mapping class group $\pi_0 \,\mathrm{Diff}^+(S_{1,1})$ is isomorphic to $\mathrm{SL}_2(\mathbb Z)$, and it is well-known that every orientation preserving element is conjugate to a composition 
$$
\phi = \pm \phi_1 \circ \phi_2 \circ \dots \circ \phi_{k_0}
$$
where each diffeomorphism $\phi_k$ corresponds to one of the matrices $L=
\left(\begin{smallmatrix}
 1&1\\0&1
\end{smallmatrix}\right)
$ and $R=
\left(\begin{smallmatrix}
 1&0\\1&1
\end{smallmatrix}\right)
$. The $\pm$ sign turns out to be irrelevant for our purposes, and the diffeomorphism $\phi$ is pseudo-Anosov precisely when both $L$ and $R$ occur in the list of the elementary diffeomorphisms~$\phi_k$. 

The simplest pseudo-Anosov diffeomorphism corresponds to $\phi = LR= \left(\begin{smallmatrix}
 2&1\\1&1
\end{smallmatrix}\right)$, in which case the mapping torus $M_{\phi, r}$ is diffeomorphic to the complement of the figure-eight knot. In this example, it turns out that the algebraic expression of the trace $\Tr \Lambda_{\phi, r}^q$ is relatively simple, as is the geometry of the hyperbolic metric of the mapping torus $M_{\phi, r}$. This enables us, in \cite{BWY2}, to prove Conjecture~\ref{con:MainConjecture} for this case ``by hand''.  

The next example by order of complexity is $\phi = LLR= \left(\begin{smallmatrix} 3&2\\1&1\end{smallmatrix}\right)$, for which there does not seem to exist any elementary proof. The two diagrams of Figure~\ref{fig:LLR} plot, for this diffeomorphism $\phi = LLR$,  the quantity  $\frac1n \log | \Tr \Lambda_{\phi, r}^{q_n} | $ for all odd $n\leq 301$. They are both associated to the same (arbitrary) $\phi$--invariant character $[r]\in \XX(S_{1,1})$,  which is \emph{not} a hyperbolic character $[r_\hyp]$ coming from the complete hyperbolic metric of the mapping torus $M_{\phi, r}$. However, they correspond to different choices of the puncture weight $\theta_v\in \C$ such that $\Tr r(\alpha_v)= \E^{\theta_v} + \E^{-\theta_v}$ when $\alpha(v) \in \pi_1(S)$ is represented by a loop going once around the puncture.

\begin{figure}[htbp]

\includegraphics[width=3in]{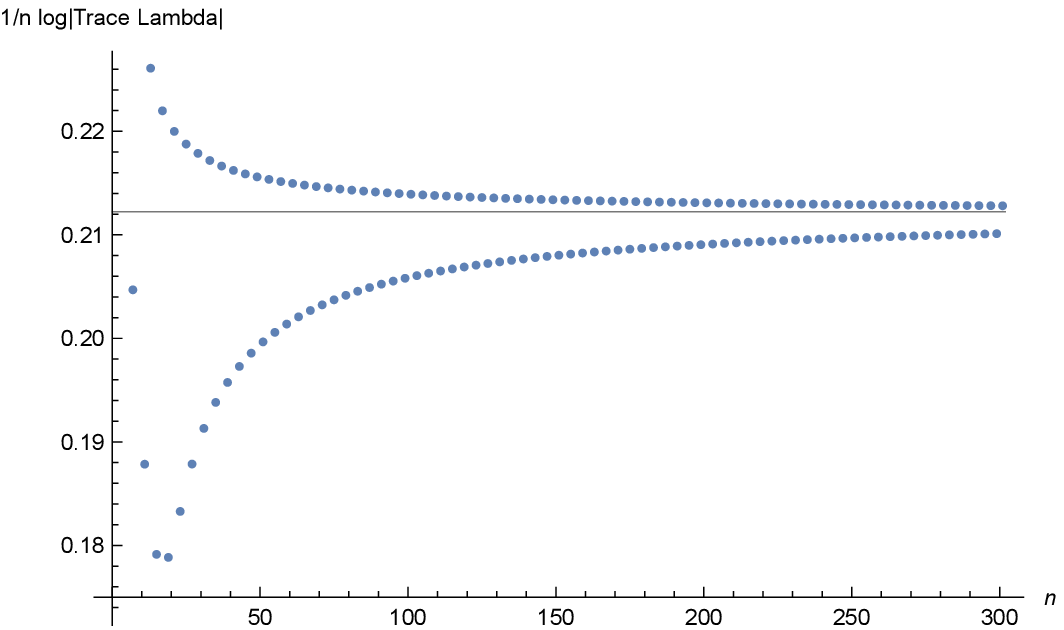} \hskip 20pt
\includegraphics[width=3in]{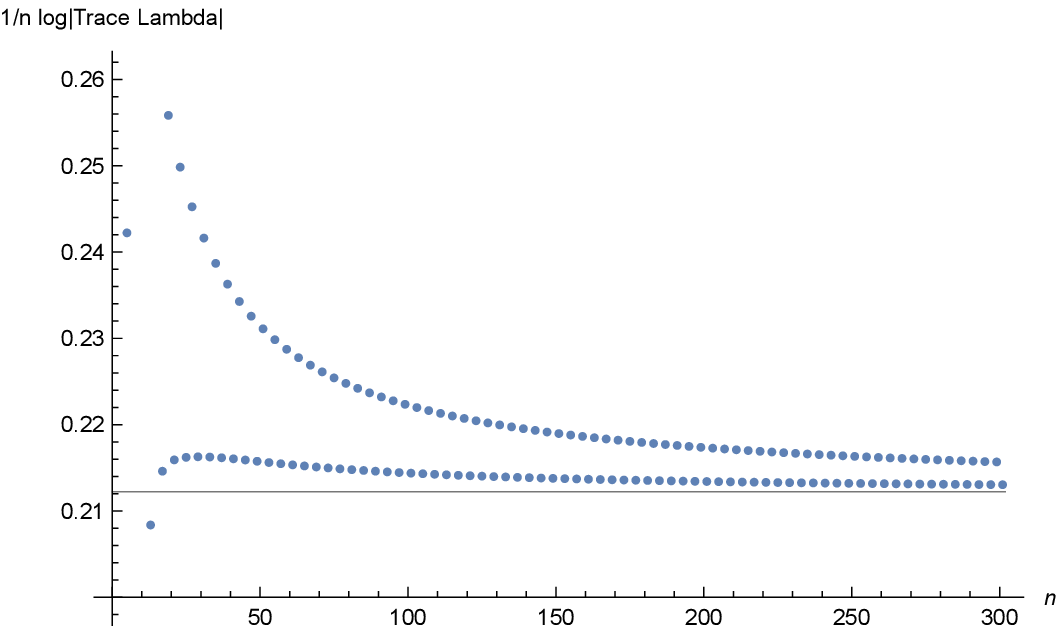}

\caption{Two plots of $\frac1n \log \left | \Tr \Lambda_{\phi, r}^{q_n} \right | $ for $\phi=LLR$, $q_n = \E^{\frac{2\pi\I}n}$, the same  $\phi$--invariant character $[r]\in \XX(S)$, but different puncture weights $\theta_v$.}
\label{fig:LLR}
\end{figure}

 In both cases, a clear bimodal pattern emerges, depending on the congruence of $n $ modulo $ 4$, although the type of the two modes is different in the two cases illustrated. If we use a curve fitting algorithm to approximate each mode with a curve of the form $a + \frac bn + \frac c{n^2} + \frac d{n^3} + \frac e{n^4}$, the output predicts a limit (= the asymptotic value $a$) approximately equal to 0.212213 for each mode, and for each of the two examples illustrated. It turns out that $0.212213$ is also the 6-digit approximation of  $\frac1{4\pi} \vol(M_{\phi, r}) $.
 
 Figure~\ref{fig:LLRother} displays a strikingly different behavior, in a slightly modified context. In the previous example, the data was actually computed by using another intertwiner $\bar \Lambda_{\phi, \bar r}^q$, associated to the projection $[\bar r] \in \XP(S_{1,1})$ and to a suitable puncture invariant, which turns out to be equal to $ \Lambda_{\phi,  r}^q$ (see \S \ref{sect:ComputeIntertwinerWithCheFock}). This intertwiner $\bar \Lambda_{\phi, \bar r}^q$ is defined for every $\phi$--invariant character $[\bar r] \in \XP(S_{1,1})$.  Figure~\ref{fig:LLRother} plots $\frac1n \log \left | \bar\Tr \Lambda_{ \phi, \bar r}^{q_n} \right | $ for $\phi=LLR$ and for a $\phi$--invariant character $[\bar r] \in \XP(S_{1,1})$ that does not lift to a $\phi$--invariant character $[r]\in \XX(S)$. Very clearly, the sequence $\frac1n \log \left | \Tr \bar \Lambda_{\phi, \bar r}^{q_n} \right | $ in this example does not exhibit the same smooth bimodal convergence as in the two cases of Figure~\ref{fig:LLR}.

 \begin{figure}[htbp]

\includegraphics[width=3in]{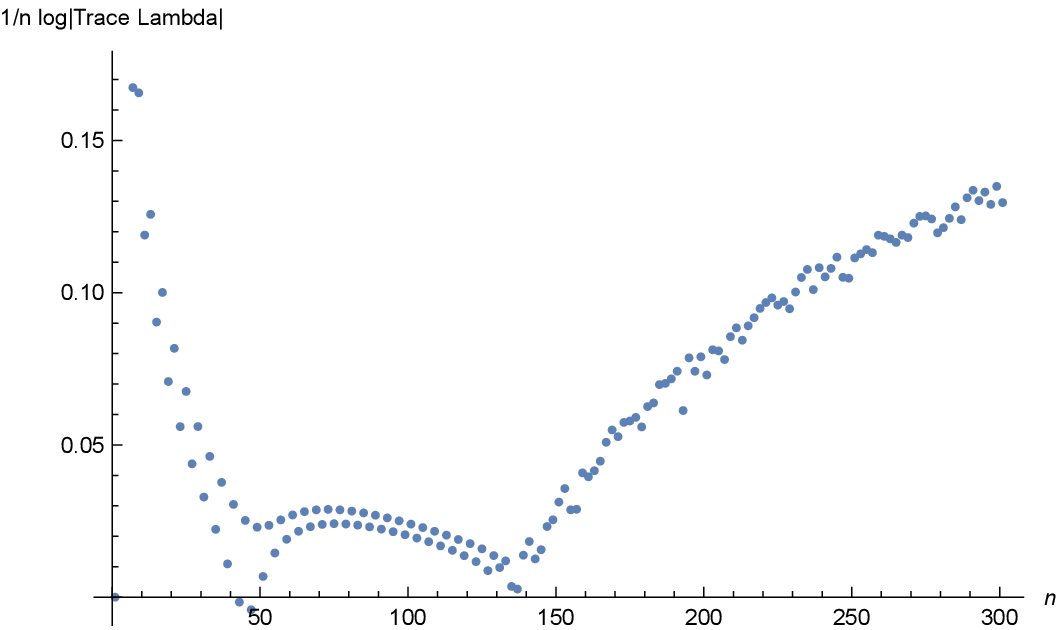} 

\caption{The plot of $\frac1n \log \left | \Tr \bar \Lambda_{ \phi, \bar r}^{q_n} \right | $ for $\phi=LLR$, $q_n = \E^{\frac{2\pi\I}n}$, and for another  $\phi$--invariant character $[\bar r]\in \XP(S)$.}
\label{fig:LLRother}
\end{figure}

 These examples are fully analyzed in \cite{BWY3}. In particular, the discrepancy between the two types of behavior is explained by different combinatorial topology properties of the corresponding invariant characters $[r]\in \XX(S)$, related to the correction factors and logarithm choices that we will encounter in \S \ref{subsect:CorrectionFactors}. In all cases, the asymptotic behavior of the sequence $\frac1n \log \left | \Tr \Lambda_{\phi, r}^{q_n} \right | $ is controlled by a finite sum of leading terms, all equal up to sign. What happens in the case of Figure~\ref{fig:LLRother} is that these leading terms all cancel out, whereas they add up to a nontrivial contribution in the cases of Figure~\ref{fig:LLR}. In the statement of Conjecture~\ref{con:MainConjecture}, the hypothesis that the $\phi$--invariant character $r\in \XX(S)$ is in the same component as a hyperbolic character $r_\hyp \in \XX(S)$ in the fixed point set of the action of $\phi$ is a simple way of excluding such full cancellations; in particular, it is not necessary for the conclusion of the conjecture to hold. 
 
 In the presence of full cancellations, as in the example of Figure~\ref{fig:LLRother}, we are not ready to make any prediction on whether there is still a limit for the sequence $\frac1n \log \left | \Tr \Lambda_{\phi, r}^{q_n} \right | $, or  what this limit might be if it does exist. 
 
 \section{Computing the intertwiner $\Lambda_{\phi, r}^q$ using ideal triangulations}
 \label{sect:ComputeIntertwinerWithCheFock}
 
 The proof of the uniqueness property of Theorem~\ref{thm:SkeinRepUnique} is rather abstract, and consequently so is the construction of the intertwiner $\Lambda_{\phi, r}^q$ of Proposition~\ref{prop:KauffmanIntertwinerDefined}. When the surface $S$ has at least one puncture, the explicit constructions of \cite{BonWon4} provide a more practical determination of $\Lambda_{\phi, r}^q$, by connecting it to the intertwiners of \cite[\S 9]{BonLiu}. This section describes this approach, which we will use in our explicit computations. The first two subsections \S \ref{subsect:EdgeWeightsGiveCharacters} and \S \ref{subsect:SweepInvariantChar} cover classical geometric material, but are needed to carefully set up the correspondence between the two points of view. They will also play an important role in the  geometric part of the arguments of \cite{BWY3}. 
 
 In the rest of the article, the surface $S$ will always be assumed to admit an ideal triangulation as defined below. This is equivalent to requiring that $S$ has at least one puncture, and even at least 3 punctures if its genus is 0. 
 
 \subsection{Ideal triangulations, complex edge weights  and $\PSL$--characters} 
 \label{subsect:EdgeWeightsGiveCharacters}
 
 If we represent the punctured surface $S = \widehat S - \{v_1, v_2, \dots, v_p \}$ as the complement of $p$ points in a compact surface $\widehat S$, an \emph{ideal triangulation} of $S$ is a triangulation of $\widehat S$ whose vertex set is equal to the set $\{v_1, v_2, \dots, v_p \}$ of the punctures. If $S$ has genus $g$ and $p$ punctures, such a triangulation necessarily has $6g+3p-6$ edges and $4g+2p-4$ triangular faces. In particular, the existence of an ideal triangulation requires, and is in fact equivalent to, the condition that $p\geq 3$ if $g=0$ (and $p>0$ in all cases). 
 
 We will insist that the data of an ideal triangulation $\tau$ includes, in addition to the triangulation itself, an indexing of its edges as $\gamma_1$, $\gamma_2$, \dots, $\gamma_e$ for $e=6g+3p-6$ (but no indexing of its triangle faces). We consider such ideal triangulations up to isotopy of $S$. 
  
 Given an ideal triangulation $\tau$, assigning a nonzero complex weight $a_i \in \C^*$ to each edge $\gamma_i$ of $\tau$ determines a character $[\bar r] \in \XP(S)$. We  give a quick outline of the construction, and refer to, for instance,  \cite[\S 8]{BonLiu} for details. 
 
 More precisely, the edge weights $a_i$ enable us to construct a pleated surface $\widetilde f \colon \widetilde S \to \HH^3$ from the universal cover $\widetilde S$ of $S$ to the hyperbolic space $\HH^3$, pleated along the triangulation $\widetilde \tau$ of $\widetilde S$ induced by $\tau$ and $\bar r$-equivariant for a unique group homomorphism $\bar r \colon \pi_1(M) \to \PSL$. By definition, this map is totally geodesic on the faces of $\widetilde \tau$, and its local geometry along an edge $\widetilde \gamma_i$  of $\widetilde \tau$ lifting an edge $\gamma_i$ of $\tau$ is determined by the following property: If we consider the four points of the boundary at infinity $\partial_\infty  \HH^3 = \CP^1$ that are the images under $\widetilde f$ of the vertices of the square formed by the two faces of $\widetilde \tau$ adjacent to $\widetilde \gamma_i$, then the crossratio of these four points is equal to $-a_i$. (Since the crossratio of four points depends on an ordering of these points, we refer to \cite[\S 8]{BonLiu} for details.) The $\bar r$--equivariance property means that, for the standard isometric action of $\PSL$ on the hyperbolic space $\HH^3$,  $\widetilde f(\alpha \widetilde x) = \bar r(\alpha) \widetilde f(\widetilde x)$ for every $\alpha \in \pi_1(S)$ and $\widetilde x \in \widetilde S$. In this construction, the edge weights $a_i \in \C^*$ are the \emph{shear-bend parameters} of the pleated surface $\widetilde f$. 
 
 This pleated surface $\widetilde f \colon \widetilde S \to \HH^3$ is uniquely determined by the ideal triangulation $\tau$ and the edge weights $a_i$, up to post-composition by an orientation-preserving isometry of $\HH^3$ and pre-composition by the lift to $\widetilde S$ of an isotopy of $S$. In particular, the homomorphism $\bar r \colon \pi_1(M) \to \PSL$ is uniquely determined up to conjugation by an element of $\PSL$. As a consequence, the corresponding character $[\bar r] \in \XP(S)$ is uniquely determined by the edge weights $a_i$. 
 
 We combine the weights $a_i \in \C^*$ into an \emph{edge weight system} $a = (a_1, a_2, \dots, a_e) \in \left( \C^* \right)^e$. 
 
 The following statement shows that ``most'' characters $[\bar r ] \in \XP(S)$ are associated in this way to an edge weight system $a \in \left( \C^* \right)^e$.
 
 \begin{lem}
 \label{lem:IdealTriangWeightsAndCharacters}
Given an ideal triangulation $\tau$, there exists a  Zariski-open dense subset $U \subset \XP(S)$ such that every character $[\bar r] \in U$ is associated as above  to an edge weight system $a\in\left( \C^* \right)^e$  for $\tau$.
\end{lem}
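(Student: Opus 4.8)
The plan is to recast the construction preceding the statement as a morphism of complex algebraic varieties
$$
\Phi\colon (\C^*)^e \to \XP(S),\qquad a=(a_1,\dots,a_e)\mapsto [\bar r_a],
$$
and to show that $\Phi$ is dominant; one then takes for $U$ the Zariski-dense open subset of $\XP(S)$ furnished by Chevalley's theorem, using that the image of a dominant morphism of irreducible varieties is constructible and Zariski-dense, hence contains a dense open subset. That $\Phi$ is a morphism follows from the explicit shape of the construction: after fixing the developing image of one face of $\widetilde\tau$ in $\HH^3$, the holonomy $\bar r_a(\gamma)$ of a loop $\gamma\in\pi_1(S)$ is a product of fixed matrices attached to the faces crossed by $\gamma$ and of ``edge matrices'' whose entries are Laurent monomials in square roots of the corresponding weights $a_i$; the square-root ambiguities cancel in $\PSL$, the resulting class depends algebraically on $a$, and composing with the GIT quotient $\PSL^k\to\PSL^k\dbar\PSL=\XP(S)$ gives the morphism $\Phi$. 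Here $k=2g+p-1\geq2$ is the rank of the free group $\pi_1(S)$, so $\XP(S)$ is irreducible of dimension $3k-3=6g+3p-6=e=\dim(\C^*)^e$; it therefore suffices to prove that $\Phi$ is dominant.

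To prove dominance I would invert the construction over a Zariski-dense open subset. Let $W\subset\XP(S)$ be the (dense, Zariski-open) locus of characters admitting a representative $\bar r$ with $\bar r(\alpha_v)\neq\Id$ for every puncture $v$. For $[\bar r]\in W$, one can choose an $\bar r$--equivariant map from the ideal vertices of $\widetilde\tau$ to $\CP^1$ sending each vertex above a puncture $v$ to a fixed point of the $\bar r$--image of its stabilizer, a conjugate of $\langle\alpha_v\rangle$; such fixed points exist since $\bar r(\alpha_v)\neq\Id$, and there are only finitely many choices. Whenever the three vertex images of each face are distinct, spanning each face of $\widetilde\tau$ by the ideal triangle of $\HH^3$ on those three vertices produces an $\bar r$--equivariant pleated surface along $\widetilde\tau$, whose shear-bend parameters $a_i=-\mathrm{cr}(\dots)$ automatically lie in $\C^*$; by the uniqueness clause recalled before the statement, $\Phi(a)=[\bar r]$. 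Hence every $[\bar r]$ in the open sublocus of $W$ on which some admissible choice yields distinct vertex images belongs to $\Phi\big((\C^*)^e\big)$. This sublocus is nonempty --- any edge weight system whose pleated surface is non-degenerate with nontrivial holonomy around each puncture exhibits a point of it --- hence dense in the irreducible variety $\XP(S)$, and $\Phi$ is dominant.

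The step I expect to be the main obstacle is the final nonemptiness claim, equivalently that the degeneracies met while reversing the construction cut out proper subvarieties: making it precise amounts to exhibiting at least one well-behaved edge weight system, which is where the theory of shear-bend coordinates for pleated surfaces developed in \cite{BonLiu} does the substantive work. An alternative that avoids the reverse construction is to compute the differential of $\Phi$ at the shear-bend system of a complete hyperbolic structure on $S$ for which $\tau$ is a geodesic ideal triangulation --- such structures form a nonempty open subset of the Teichm\"uller space of $S$ --- and to check that $d\Phi$ is injective there, which again follows from the fact that the shear-bend parameters are local holomorphic coordinates on $\XP(S)$ near an irreducible character.
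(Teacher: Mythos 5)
Your route is genuinely different from the paper's, and it contains a gap that you yourself flag. You propose to package the construction $a \mapsto [\bar r_a]$ as an algebraic morphism $\Phi\colon (\C^*)^e \to \XP(S)$, prove that $\Phi$ is dominant, and invoke Chevalley constructibility to extract a dense Zariski-open subset of the image. The paper instead directly exhibits the open set: it introduces the notion of a \emph{$\tau$--enhancement} (an $\bar r$--equivariant assignment $\xi\colon\Pi\to\CP^1$ of a fixed point of $\bar r(\pi)$ to each peripheral subgroup $\pi$, taking distinct values at the two ends of each lifted edge), observes that an equivariant $\xi$ always exists, and defines $U$ by the finitely many trace conditions $\Tr \bar r(\alpha_\pi\alpha_{\pi'}\alpha_\pi^{-1}\alpha_{\pi'}^{-1})\neq +2$ for the pairs $(\pi,\pi')$ at the ends of a lifted edge. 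That condition rules out a common fixed point of $\bar r(\pi)$ and $\bar r(\pi')$, so for $[\bar r]\in U$ \emph{every} equivariant $\xi$ is automatically an enhancement; density of $U$ is checked directly on the representation variety $\PSL^{2g+p-1}$ using the freeness of $\pi_1(S)$.

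The part of your argument that is not yet a proof is exactly the dominance. Your first route --- inverting the construction over the locus where $\bar r(\alpha_v)\neq\Id$ and where some choice of fixed points gives pairwise distinct vertex images on each face --- has the same shape as the paper's proof, but leaves the real work (showing that this locus is nonempty and Zariski-open) to an assertion; the paper's commutator-trace criterion does precisely that work. Your second route --- computing $d\Phi$ at a Fuchsian point and citing that shear-bend parameters are local holomorphic coordinates near an irreducible character --- is dangerously close to circular: that local-coordinate statement, in the generality stated, is essentially what the lemma plus a dimension count would yield, and what is actually available is the classical real result on Teichm\"uller space plus a nontrivial complexification argument, which is heavier machinery than the paper uses. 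A further, purely practical difference: the paper's proof is deliberately explicit because the $\tau$--enhancement setup and the set $U$ it produces are reused verbatim in the proof of Lemma~\ref{lem:PeriodicEdgeWeightSystemsInvariantChar}; an abstract dominance-plus-Chevalley argument would not supply the intermediate objects that later proof needs.
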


This is a well-known property, but we will need to refer to the details of its proof for the less familiar Lemma~\ref{lem:PeriodicEdgeWeightSystemsInvariantChar} below.

\begin{proof} The key idea is that, for a group homomorphism $\bar r\colon \pi_1(S) \to \PSL$, an $\bar r$--equivariant pleated surface $\widetilde f \colon \widetilde S \to \HH^3$ pleated along $\tau$ can be reconstructed from fixed point data of the action of $\bar r\big(\pi_1(S) \big) \subset \PSL$ on the complex projective line $\CP^1$. 

A puncture of the surface $S$ determines infinitely many \emph{peripheral subgroups} of $\pi_1(S)$, all conjugate to each other. These are each generated by an element of $\pi_1(S)$ represented by a small loop going once around the puncture. 

In particular, for every edge $\widetilde \gamma$ of the ideal triangulation $\widetilde \tau$ of $\widetilde S$ induced by $\tau$,  each end of $\widetilde\gamma$ determines a peripheral subgroup $\pi \subset \pi_1(S)$. We can then consider the image  $\widetilde f \left( \widetilde \gamma \right)$ under the pleated surface  $\widetilde f \colon \widetilde S \to \HH^3$. The endpoints of this geodesic $\partial_\infty \HH^3 = \CP^1$ are each fixed by  the image $\bar r(\pi) \subset \PSL$  of the peripheral subgroup $\pi \subset \pi_1(S)$ associated to the corresponding end of $\widetilde \gamma$. Because $\widetilde f$ sends every face of $\widetilde\tau$ to an ideal triangle in $\HH^3$ (such that any two sides of the triangle share an endpoint), this endpoint $\xi(\pi) \in \CP^1$ depends only on the peripheral subgroup $\pi$, not on the particular edge $\widetilde\gamma$ leading to it. 

The fundamental group $\pi_1(S)$ acts by conjugation on the set $\Pi$ of its peripheral subgroups. A \emph{$\tau$--enhancement} for a homomorphism $\bar r \colon \pi_1(S) \to \PSL$ is a map $\xi \colon \Pi \to \CP^1$ such that:
\begin{enumerate}
\item
 The map $\xi$ is $\bar r$--equivariant, in the sense that
 $$
 \xi\left(\alpha \pi \alpha^{-1} \right) = \bar r(\alpha) \xi(\pi)
 $$
 for every peripheral subgroup $\pi \in \Pi$ and for every $\alpha \in \pi_1(S)$. 
 \item  If $\pi$, $\pi'\in \Pi$ are the peripheral subgroups respectively associated to the two ends of an edge $\widetilde \gamma$ of the ideal triangulation $\widetilde\tau$ of $\widetilde S$ induced by $\tau$, then $\xi(\pi) \neq \xi(\pi')$ in $\CP^1$. 
\end{enumerate}

We just saw that an $\bar r$--equivariant pleated surface $\widetilde f \colon \widetilde S \to \HH^3$ pleated along $\tau$ uniquely determines a $\tau$--enhancement. Conversely, given a $\tau$--enhancement $\xi$, one can use the second condition in the definition of enhancements to reconstruct an $\bar r$--equivariant pleated surface $\widetilde f$ realizing it, by considering the geodesics and ideal triangles of $\HH^3$ that will form the images under $\widetilde f$ of the edges and faces of the ideal triangulation $\widetilde \tau$. The shear-bend parameters of this pleated surface then form an edge weight system $a \in \left( \C^* \right)^e$, associated to the character $[\bar r]\in \XP(S)$ by construction. 

We consequently have converted the problem of finding an edge weight system $a \in \left( \C^* \right)^e$ associated to $[\bar r]$ to finding a $\tau$--enhancement for $\bar r$. 

With this in mind, we consider a homomorphism $\bar r\colon \pi_1(S) \to \PSL$ and look for a $\tau$--enhancement for $\bar r$. This will require us to impose some restrictions on the character $[\bar r] \in \XP(S)$. 

Since every element of $\PSL$ has at least one fixed point in $\CP^1$, the image $\bar r(\pi) \in \PSL$ of each peripheral subgroup $\pi \in \Pi$ admits a fixed point $\xi(\pi) \in \CP^1$, and we can easily turn this to an $\bar r$--equivariant map $\xi \colon \Pi \to \CP^1$ by considering the finitely many orbits of the action of $\pi_1(S)$ on $\Pi$. 

If we want $\xi$ to be a $\tau$--enhancement for $\bar r$, we need to make sure that $\xi(\pi) \neq \xi(\pi')$ whenever $\pi$ and $\pi' \in \Pi$ are associated to the endpoints of an edge of the ideal triangulation $\widetilde \tau$. One easy way to guarantee this is to arrange that the cyclic subgroups $\bar r(\pi)$ and $\bar r(\pi')\subset \PSL$ have no common fixed points, and to take advantage of   the following elementary property: Two elements $A$, $B\in \PSL$ have a common fixed point in $\CP^1$ if and only if $\Tr \left(ABA^{-1}B^{-1} \right) = +2$. (Note that, although the trace of an element of $\PSL$ is only defined up to sign, the trace of a commutator is uniquely determined.)

For this purpose, pick a generator $\alpha_\pi \in \pi $ for each peripheral subgroup $\pi \in \Pi$, and do this in a $\pi_1(S)$--equivariant way in the sense that $\alpha_{\beta \pi \beta^{-1}} = \beta \alpha_{\pi} \beta^{-1}$ for every $\beta \in \pi_1(S)$ and $\pi \in \Pi$. Then define $U \subset \XP(S)$ as the set of characters $[\bar r]$ such that $\Tr \bar r\left( \alpha_\pi \alpha_{\pi'} \alpha_\pi^{-1} \alpha_{\pi'}^{-1} \right ) \neq +2$ whenever $\pi$ and $\pi' \in \Pi$ are associated to the endpoints of an edge of the ideal triangulation $\widetilde \tau$.

By $\bar r$--equivariance and because $\tau$ only has finitely many edges, there are only finitely many such conditions $\Tr \bar r\left( \alpha_\pi \alpha_{\pi'} \alpha_\pi^{-1} \alpha_{\pi'}^{-1} \right ) \neq +2$ that are relevant. It follows that $U$ is Zariski-open. 

For any two distinct peripheral subgroups $\pi$, $\pi' \in \Pi$, the set of group homomorphisms $\bar r\colon \pi_1(S) \to \PSL$ such that $\Tr \bar r\left( \alpha_\pi \alpha_{\pi'} \alpha_\pi^{-1} \alpha_{\pi'}^{-1} \right ) \neq +2$  is easily seen to be dense in the representation variety $\mathcal R_{\PSL}(S)=\PSL^{2g+p-1}$ consisting of all homomorphisms $\bar r\colon \pi_1(S) \to \PSL$, by consideration of a suitable presentation for the free group $\pi_1(S)$. It follows that its image in $\XP(S)$ is dense. Since $U$ is the intersection of finitely many such images, it is dense in $\XP(S)$. 

We consequently found a dense Zariski-open subset $U\subset \XP(S)$ such that every group homomorphism $\bar r\colon \pi_1(S) \to \PSL$ with $[\bar r] \in U$ admits a $\tau$--enhancement, and is therefore associated to an edge weight system for the ideal triangulation $\tau$. 
\end{proof}

We will need to extract some geometric information from this construction. In the above proof, for every peripheral subgroup $\pi$ of $\pi_1(S)$, we had considered a generator $\alpha_\pi$ of $\pi$  represented by a loop going once around the puncture $v$ of $S$ corresponding to $\pi$. We now impose the additional condition that this loop goes counterclockwise around $v$. Then, if we are given a $\tau$--enhancement $\xi \colon \Pi \to \CP^1$ for the homomorphism $\bar r\colon \pi_1(S) \to \PSL$, the point $\xi(\pi)\in \CP^1$ is a line in $\C^2$ of eigenvectors of $\bar r(\alpha_\pi)\in \PSL$ corresponding to an eigenvalue $\lambda_v\in \C^*$. This eigenvalue $\lambda_v$ is determined up to sign since $\bar r(\alpha_\pi)\in \PSL$ is only projectively defined, and depends only on the puncture $v$ by $\bar r$--equivariance of the construction, whence the notation. 

The following is a simple consequence of the precise definition of the shear-bend parameters $a_i\in \C^*$. 

\begin{lem}
 \label{lem:PunctureEigenvalueShearbendParam}
 Let the character $[\bar r] \in \XP(S)$ be associated to the ideal triangulation $\tau$ and to the edge weight system $a\in \left( \C^* \right)^e$ and, for a puncture $v$ of $S$, let the eigenvalue $\lambda_v$ be defined as above. Then,
 $$
 \lambda_v^2 = a_{i_1} a_{i_2} \dots a_{i_k}
 $$
 where the $a_{i_j}$ are the weights of the edges $\gamma_{i_1}$, $\gamma_{i_2}$, \dots, $\gamma_{i_k}$ of $\tau$ that are adjacent to $v$ (counting an edge twice when both of its ends lead to $v$).  
 \qed
\end{lem}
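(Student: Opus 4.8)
The plan is to compute the eigenvalue $\lambda_v$ directly from the developing map / pleated surface picture, by tracking what happens to the ideal triangles fanning around the puncture $v$ as we apply the monodromy $\bar r(\alpha_\pi)$. The key observation is that the link of the puncture $v$ in the ideal triangulation $\tau$ is a cyclic chain of triangle corners at $v$, glued along the edges $\gamma_{i_1}, \gamma_{i_2}, \dots, \gamma_{i_k}$ adjacent to $v$; lifting this to the universal cover $\widetilde\tau$, we get an infinite strip of ideal triangles all sharing the common ideal vertex $\xi(\pi) \in \CP^1$, and the deck transformation $\alpha_\pi$ shifts this strip by $k$ triangles (i.e.\ by one period of the chain around $v$).

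**First** I would normalize coordinates. Send the common ideal vertex $\xi(\pi)$ to $\infty \in \CP^1 = \C \cup \{\infty\}$; then $\bar r(\alpha_\pi)$ fixes $\infty$, so it acts on $\C$ as $z \mapsto \lambda_v^{2} z + \text{(const)}$ in the standard identification of $\PSL$ with Möbius transformations — here the multiplier at the fixed point is $\lambda_v^2$ because the two eigenvalues of a matrix in $\mathrm{SL}_2$ are $\lambda_v^{\pm1}$ and the Möbius multiplier is their ratio. (If $\bar r(\alpha_\pi)$ is parabolic, $\lambda_v^2 = 1$ and one checks separately that the product of edge weights around a cusp is $1$ for the same reason.) The remaining vertices of the triangle strip are then finite points $x_0, x_1, x_2, \dots$ of $\C$, with $\alpha_\pi$ sending the edge between $x_{j-1}$ and $x_j$ to the edge between $x_{j+k-1}$ and $x_{j+k}$.

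**Next** comes the heart of the argument: recall from the construction recalled in \S\ref{subsect:EdgeWeightsGiveCharacters} that the shear-bend parameter $a_{i_j}$ along the edge $\gamma_{i_j}$ is (minus) the crossratio of the four ideal vertices of the two triangles adjacent to that edge; with one of those four vertices being $\infty$, the crossratio degenerates to a ratio of differences of the three finite vertices, of the shape $(x_{j+1} - x_j)/(x_j - x_{j-1})$ (up to the precise conventions of \cite[\S 8]{BonLiu}, which fix the ordering and the sign so that this equals $a_{i_j}$ rather than $a_{i_j}^{-1}$). Multiplying these telescoping ratios over one full period $j = 1, \dots, k$ gives
$$
a_{i_1} a_{i_2} \cdots a_{i_k} = \frac{x_{k+1} - x_k}{x_1 - x_0}.
$$
But $\alpha_\pi$ carries the segment $[x_0, x_1]$ to the segment $[x_k, x_{k+1}]$ and acts as multiplication by $\lambda_v^2$ near $\infty$, hence $x_{k+1} - x_k = \lambda_v^2 (x_1 - x_0)$, and the product collapses to $\lambda_v^2$, as claimed. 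The counterclockwise convention on $\alpha_\pi$ fixed just before the lemma is exactly what is needed to ensure the triangles are traversed in the order matching the orientation used to define the crossratios, so that we get $\lambda_v^2$ and not $\lambda_v^{-2}$.

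**The main obstacle** I anticipate is purely bookkeeping: matching the sign conventions and the ordering of the four points in the crossratio definition of \cite[\S 8]{BonLiu} with the orientation of the puncture link and the chosen generator $\alpha_\pi$, so that the telescoping product comes out to $\prod a_{i_j}$ rather than $\prod a_{i_j}^{-1}$ or $(-1)^k \prod a_{i_j}$. Once the degenerate-crossratio formula is pinned down in a normalization sending $\xi(\pi)$ to $\infty$, the telescoping and the multiplier computation are immediate; so I would spend most of the write-up carefully setting up Figure-style notation for the triangle fan around $v$ and invoking the precise crossratio convention, then remark that the parabolic case follows by the same computation with $\lambda_v^2 = 1$.
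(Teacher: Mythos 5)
Your proof is correct and is precisely the computation the paper leaves unwritten (the lemma appears with no proof, preceded only by the remark that it is ``a simple consequence of the precise definition of the shear-bend parameters''). Normalizing $\xi(\pi)$ to $\infty$, reading each $a_{i_j}$ as a degenerate crossratio of three consecutive finite vertices of the fan around $\infty$, and telescoping against the affine multiplier $\lambda_v^2$ of $\bar r(\alpha_\pi)$ at $\infty$ is the intended argument; note also that your parenthetical worry about the parabolic case is unnecessary, since the same telescoping with $z \mapsto z + c$ yields the product equal to $1 = \lambda_v^2$ without a separate check.
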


\begin{rem} 
\label{rem:PunctureEigenvalueShearbendParam}
Although the eigenvalue $\lambda_v$ is only defined up to sign, it will be uniquely determined if we are given an $\SL$--character $[r] \in \XX(S)$ lifting $[\bar r] \in \XP(S)$. This will occur in \S \ref{subsect:KauffmanCheFockIntertwiner}. 
\end{rem}

 \subsection{Ideal triangulation sweeps and $\phi$--invariant characters}
 \label{subsect:SweepInvariantChar}

 There are two important elementary operations on ideal triangulations. The first one is a simple \emph{edge reindexing}, where an ideal triangulation $\tau$ with edges $\gamma_i$ is replaced with the ideal triangulation $\tau'$ with edges $\gamma_i' = \gamma_{\sigma(i)}$ for some permutation $\sigma$ of the index set $\{1, 2, \dots,e\}$. 
 
 The second elementary operation is the \emph{diagonal exchange} at the $i_0$--th edge, where the ideal triangulation $\tau$ with edges $\gamma_i$ is replaced with the ideal triangulation $\tau'$ with edges $\gamma_i'$ such that
\begin{itemize}
 \item $\gamma_i'=\gamma_i$ for every $i \neq i_0$;
 \item $\gamma_{i_0}'$ is the other diagonal of the square formed by the two faces of $\tau$ that are adjacent to $\gamma_{i_0}$. 
\end{itemize}
See Figure~\ref{fig:DiagonalExchange} for a pictorial description. Beware that the edges $\gamma_{i_1}=\gamma_{i_1}'$, $\gamma_{i_2}=\gamma_{i_2}'$, $\gamma_{i_3}=\gamma_{i_3}'$, $\gamma_{i_4}=\gamma_{i_4}'$ that form the boundary of the square may not necessarily be distinct. For instance, the identifications ${i_1}={i_3}$ and ${i_2}={i_4}$ will hold in the case of the one-puncture torus that we will consider in \S \ref{sect:IntertwinersPunctTorus}. 

\begin{figure}[htbp]
\vskip 10pt
\SetLabels
(.15 * .55) $\gamma_{i_0}$\\
( .18 * 1.05 ) $\gamma_{i_1}$  \\
( .18 * -.12 ) $\gamma_{i_3}$  \\
( -.03 * .5 ) $\gamma_{i_4}$  \\
( .39 * .5 ) $\gamma_{i_2}$  \\
(.84 * .55) $\gamma_{i_0}'$\\
( .82 * 1.05 ) $\gamma_{i_1}'$  \\
( .82 * -.12 ) $\gamma_{i_3}'$  \\
( .61 * .5 ) $\gamma_{i_4}'$  \\
( 1.04 * .5 ) $\gamma_{i_2}'$  \\
\endSetLabels
\centerline{\AffixLabels{\includegraphics[width=.5\textwidth]{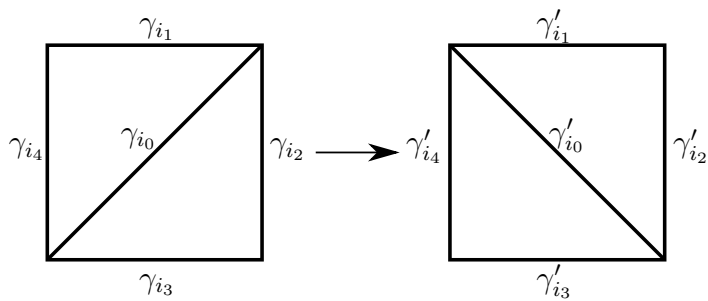}}}
\vskip 10pt

\caption{A diagonal exchange}
\label{fig:DiagonalExchange}
\end{figure}

 If $\tau'$ is obtained by reindexing the edges of $\tau$ by a permutation $\sigma$ of $\{1, 2, \dots, e\}$, the edge weights $a_i'= a_{\sigma(i)}$ for $\tau'$ clearly define the same character $[\bar r] \in \XP(S)$ as the edge weights $a_i$ for $\tau$, as the corresponding pleated surface is unchanged. 
 
 If $\tau'$ is obtained from $\tau$ by a diagonal exchange at its $i_0$--th edge, there similarly exists edge weights $a_i'$ for $\tau'$ that define the same character $[\bar r] \in \XP(S)$ as the edge weights $a_i$ for $\tau$, as long as $a_{i_0} \neq -1$. The precise formula expressing the $a_i'$ in terms of the $a_j$ depends on the possible identifications between the sides of the square where the diagonal exchange takes place and can, for instance, be found in \cite[\S 2]{Liu} or \cite[\S 8]{BonLiu}. To give the flavor, the expression when there are no such side identifications, and with the indexing of Figure~\ref{fig:DiagonalExchange}, is that
\begin{equation}
 \label{eqn:ShearbendEmbeddedDiagEx}
 a_i' =
\begin{cases}
a_i &\text{if } i \neq i_0, i_1, i_2, i_3, i_4\\
a_{i_0}^{-1} &\text{if } i=i_0\\
a_i\big(1+a_{i_0} \big)  &\text{if } i=i_1 \text{ or } i_3\\
a_i\big(1+ a_{i_0}^{-1} \big)^{-1} &\text{if }  i=i_2 \text{ or } i_4
\end{cases}
\end{equation}
We will also encounter in \S \ref{subsect:LeftRightIntertwiners} the appropriate formulas for the case of the one-puncture torus, where $i_1=i_3$ and $i_2=i_4$. 

Now, suppose that we can connect two ideal triangulation $\tau$ and $\tau'$ by a finite sequence of ideal triangulations $\tau = \tau^{(0)}$, $ \tau^{(1)}$, \dots, $ \tau^{(k_0-1)}$, $ \tau^{(k_0)}= \tau'$ where each $ \tau^{(k)}$ is obtained from $ \tau^{(k-1)}$ by an edge reindexing or by a diagonal exchange (such a sequence always exist). We  call this an \emph{ideal triangulation sweep} from $\tau$ to $\tau'$.

We can then start with an edge weight system $a^{(0)} = \big( a_1^{(0)}, a_2^{(0)}, \dots, a_{e}^{(0)} \big) \in \left( \C^* \right)^{e}$ for $\tau^{(0)}= \tau$, and then apply the above formulas to obtain edge weight systems $a^{(k)} \in \left( \C^* \right)^{e}$ for each $\tau^{(k)}$ that all define the same character $[\bar r] \in \XP(S)$. This requires that $a_i^{(k-1)} \neq -1$ whenever $\tau^{(k)}$ is obtained from $\tau^{(k-1)}$ by a diagonal exchange at its $i$--th edge. We will then refer to the sequence $a^{(0)}$, $a^{(1)}$, \dots,  $a^{(k_0)} \in \left( \C^* \right)^{e}$ as an \emph{edge weight system for the ideal triangulation sweep} $\tau = \tau^{(0)}$, $ \tau^{(1)}$, \dots, $ \tau^{(k_0-1)}$, $ \tau^{(k_0)}=\tau'$. 

By construction, an edge weight system for an ideal triangulation sweep uniquely determines a character $[\bar r]\in \XP(S)$. 

We are particularly interested in the case where the ideal triangulation $\tau'$ is the image $\phi(\tau)$ of $\tau$ 
under the orientation-preserving diffeomorphism $\phi \colon S \to S$.  Namely, $\tau'=\phi(\tau)$ is the ideal triangulation whose $i$--th edge is the image under $\phi$ of the $i$--th edge of $\tau$. This example actually provides the motivation for the terminology of ``sweeps'', as such an ideal triangulation sweep provides an ideal triangulation of the mapping torus $M_{\phi, r}$,  whose 2--skeleton is a union of surfaces sweeping around $M_{\phi, r}$; see for instance \cite{Agol}.

\begin{lem}
\label{lem:PeriodicEdgeWeightSystemsInvariantChar}
Let  $\tau = \tau^{(0)}$, $ \tau^{(1)}$, \dots, $ \tau^{(k_0-1)}$, $ \tau^{(k_0)}=\phi(\tau)$ be an ideal triangulation sweep from $\tau$ to $\phi(\tau)$. If the weight system  $a^{(0)}$, $a^{(1)}$, \dots,  $a^{(k_0)} \in \left( \C^* \right)^{e}$ for this sweep is such that $a^{(k_0)} = a^{(0)}$, then its associated character $[\bar r]\in \XP(S)$ is fixed by  the action of $\phi$ on $\XP(S)$. 

Conversely, there is a Zariski-open dense subset $U \subset \XP(S)$ such that every $\phi$--invariant character $[\bar r] \in U$ is associated in this way to an edge weight system $a^{(0)}$, $a^{(1)}$, \dots,  $a^{(k_0)}$ with $a^{(k_0)} = a^{(0)}$. 
\end{lem}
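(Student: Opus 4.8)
The plan is to prove the two implications separately, both building on the machinery of Lemma~\ref{lem:IdealTriangWeightsAndCharacters} and its proof.

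For the forward implication, suppose we are given an edge weight system $a^{(0)}, a^{(1)}, \dots, a^{(k_0)}$ for the sweep $\tau = \tau^{(0)}, \dots, \tau^{(k_0)} = \phi(\tau)$ with $a^{(k_0)} = a^{(0)}$. By the discussion preceding the lemma, $a^{(0)}$ determines the character $[\bar r] \in \XP(S)$ via a pleated surface $\widetilde f \colon \widetilde S \to \HH^3$ with shear-bend parameters $a^{(0)}$ along $\widetilde\tau$, equivariant for $\bar r \colon \pi_1(S) \to \PSL$. Since all the $a^{(k)}$ define the same character, in particular $a^{(k_0)}$ as an edge weight system for $\tau^{(k_0)} = \phi(\tau)$ defines $[\bar r]$ as well. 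Now the key observation is that an edge weight system $b$ for $\phi(\tau)$ defines the same character as the edge weight system $b$ for $\tau$ does \emph{after applying $\phi^*$}: precomposing the pleated surface for $(\tau, b)$ with (the lift of) $\phi$ gives a pleated surface along $\widetilde{\phi(\tau)}$ with the same shear-bend data $b$, equivariant for $\bar r \circ \phi_*$. Hence the character defined by $(\phi(\tau), a^{(k_0)})$ equals $\phi^*\big([\bar r']\big)$ where $[\bar r']$ is the character defined by $(\tau, a^{(k_0)})$. Since $a^{(k_0)} = a^{(0)}$, we get $[\bar r'] = [\bar r]$, so $\phi^*\big([\bar r]\big) = [\bar r]$, i.e.\ $[\bar r]$ is $\phi$--invariant.

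For the converse, fix a sweep from $\tau$ to $\phi(\tau)$; such a sweep exists, and moreover the same sequence of elementary moves performed on $\phi(\tau)$ yields a sweep from $\phi(\tau)$ to $\phi^2(\tau)$, and so on. Let $U_0 \subset \XP(S)$ be the Zariski-open dense set of Lemma~\ref{lem:IdealTriangWeightsAndCharacters} for the triangulation $\tau$; intersecting over the finitely many triangulations $\tau^{(k)}$ appearing in the sweep (and removing the characters for which some $a_i^{(k)} = -1$ would force a diagonal exchange to be undefined, which is again a Zariski-closed condition), we obtain a Zariski-open dense subset $U \subset \XP(S)$ on which every character $[\bar r]$ is carried by an edge weight system along the \emph{entire} sweep. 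Given a $\phi$--invariant $[\bar r] \in U$, produce $a^{(0)}$ for $\tau$ with associated character $[\bar r]$, then run the shear-bend transformation formulas along the sweep to get $a^{(0)}, a^{(1)}, \dots, a^{(k_0)}$, all defining $[\bar r]$. By the argument of the forward implication, $a^{(k_0)}$ viewed as an edge weight system for $\phi(\tau)$ defines $[\bar r]$, hence $a^{(k_0)}$ viewed as an edge weight system for $\tau$ defines $\phi^*{}^{-1}\big([\bar r]\big) = [\bar r]$. Thus both $a^{(0)}$ and $a^{(k_0)}$ are edge weight systems for $\tau$ with the same associated character.

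The remaining point — and the main obstacle — is that two edge weight systems for the \emph{same} triangulation $\tau$ defining the same character need not be literally equal: a priori there could be a discrete ambiguity. This is where I would invoke the reconstruction from $\tau$--enhancements in the proof of Lemma~\ref{lem:IdealTriangWeightsAndCharacters}. The shear-bend parameters of a pleated surface pleated along $\tau$ are completely determined by the map $\xi \colon \Pi \to \CP^1$ (the vertices of the developed triangles), via the crossratio formula. So to conclude $a^{(k_0)} = a^{(0)}$ it suffices to know that the $\tau$--enhancement is unique. For $[\bar r]$ in a suitable Zariski-open dense set this holds: each peripheral image $\bar r(\alpha_\pi) \in \PSL$ is a nontrivial element with exactly one fixed point on $\CP^1$ when it is parabolic, or two when it is loxodromic/elliptic; restricting to the open set where every relevant peripheral image is parabolic \emph{or} where the enhancement choice is pinned down by equivariance across the finitely many $\pi_1(S)$-orbits (using that the developing map must send adjacent triangle vertices to distinct points, condition (2) of an enhancement), the enhancement — hence the edge weight system for $\tau$ — is unique. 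One must check this uniqueness survives shrinking $U$ further, still Zariski-open and dense; I would phrase the final $U$ as the intersection of $U_0$, the move-definedness locus, and this enhancement-uniqueness locus. Assembling these, $a^{(k_0)} = a^{(0)}$ on $U$, completing the converse.
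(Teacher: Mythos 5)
Your forward implication is essentially the paper's argument: precompose the pleated surface for $(\phi(\tau), a^{(k_0)})$ with the lift $\widetilde\phi$ to get an $(\bar r\circ\phi_*)$-equivariant pleated surface along $\widetilde\tau$ with the same shear-bend parameters $a^{(k_0)}=a^{(0)}$, and conclude $[\bar r]=[\bar r\circ\phi_*]$. That part is fine.

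The converse has a genuine gap, and you have put your finger exactly where it is — but your proposed resolution does not work. You want $a^{(k_0)}=a^{(0)}$ to follow from the fact that both are edge weight systems for $\tau$ associated to $[\bar r]$, which requires uniqueness of the $\tau$-enhancement $\xi\colon\Pi\to\CP^1$. But this uniqueness fails on every Zariski-open dense subset of $\XP(S)$. Generically each peripheral image $\bar r(\pi)$ is loxodromic and fixes \emph{two} points of $\CP^1$; $\bar r$-equivariance only forces a choice of one fixed point per $\pi_1(S)$-orbit of peripheral subgroups, and condition (2) of an enhancement (distinctness at edge ends) does not cut the options down to one. So there are typically $2^{c}$ distinct enhancements, hence several distinct edge weight systems for $\tau$ all representing $[\bar r]$. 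Requiring all peripherals to be parabolic, your other suggestion, is not Zariski-open dense either — it is a codimension-$p$ condition. So the final step of your argument — that $a^{(0)}$ and $a^{(k_0)}$ coincide because they represent the same character — cannot be salvaged by shrinking $U$.

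The paper's proof avoids this by never comparing two independently-chosen edge weight systems. Instead, it starts from the $\phi$-invariance $\bar r\circ\phi_* = A\,\bar r\,A^{-1}$ for some $A\in\PSL$ and constructs a \emph{single} $\bar r$-equivariant map $\xi$ that is additionally $\phi$-equivariant, meaning $\xi(\phi_*(\pi)) = A\,\xi(\pi)$ for all $\pi\in\Pi$. Once $\xi$ is chosen compatibly, the periodicity $a^{(k_0)}=a^{(0)}$ is automatic: $a_i^{(k_0)}$ is the crossratio of $\xi(\phi_*(\pi)),\dots$, which equals the crossratio of $A\,\xi(\pi),\dots$, which equals the crossratio of $\xi(\pi),\dots = a_i^{(0)}$ by $\PSL$-invariance of the crossratio. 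The nontrivial part is showing such a $\phi$-equivariant $\xi$ exists: one proceeds orbit by orbit under the group generated by $\pi_1(S)$ and $\phi_*$ acting on $\Pi$, and must verify a compatibility condition $A^j\,\xi(\pi)=\bar r(\alpha)\,\xi(\pi)$ whenever $\phi_*^j(\pi)=\alpha\pi\alpha^{-1}$. This is where the hypothesis that $\phi$ is \emph{orientation-preserving} is used — it forces $\phi_*^j(\beta)=\alpha\beta\alpha^{-1}$ (not $\alpha\beta^{-1}\alpha^{-1}$) for $\beta\in\pi$, so that $\bar r(\alpha)^{-1}A^j$ centralizes $\bar r(\pi)$ and hence fixes each of its fixed points (after excluding order-$2$ peripheral images by a further Zariski-open condition). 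This compatibility step, and the insight that one should build a $\phi$-equivariant enhancement rather than compare enhancements, are the ideas your proposal is missing.
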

\begin{proof}
Lift $\phi$ to a diffeomorphism $\widetilde \phi \colon \widetilde S \to \widetilde S$. It  is $\phi_*$--equivariant for a suitable homomorphism $\phi_* \colon \pi_1(S) \to \pi_1(S)$, in the sense that $\widetilde \phi \left( \alpha \widetilde x \right) = \phi_*(\alpha) \widetilde\phi \left( \widetilde x \right)$ for every $\widetilde x \in \widetilde S$ and $\alpha \in \pi_1(S)$. 

 If $\widetilde f^{(0)}$, $\widetilde f^{(k_0)} \colon \widetilde S \to \HH^3$ are $\bar r$--equivariant pleated surfaces respectively associated to the ideal triangulations $\tau^{(0)}$, $\tau^{(k_0)}$ and to the edge weight systems $a^{(0)}$, $a^{(k_0)}$, then $\widetilde f^{(k_0)} \circ \widetilde \phi$ is pleated along $\tau^{(0)}$ according to the edge weight system   $a^{(k_0)}$. If  $a^{(0)} = a^{(k_0)}$, it follows that $\widetilde f^{(k_0) }\circ \widetilde \phi$ coincides with $\widetilde f^{(0)}$ up to post-composition with an isometry $A \in \PSL$ of $\HH^3$ and pre-composition with the lift to $\widetilde S$ of an isotopy of $S$. Note that $\widetilde f^{(k_0) }\circ \widetilde \phi$ is $\bar r\circ \phi_*$--equivariant. Since $\bar r$ is the only homomorphism $\pi_1(S) \to \PSL$ for which $\widetilde f^{(0)}$ is $\bar r$--equivariant, it follows that $\bar r(\alpha ) = A\left( \bar r\circ\phi_*(\alpha) \right) A^{-1}$ for every $\alpha \in \pi_1(S)$. 
 
 In particular, $[\bar r] = [\bar r \circ \phi_*] $ in $ \XP(S)$. In other words, the character $[\bar r]\in \XP(S)$ is fixed by the action of $\phi$ on $\XP(S)$.

For the second statement, we will use the set-up of Lemma~\ref{lem:IdealTriangWeightsAndCharacters} and its proof. For each $k=0$, $1$, \dots, $k_0$, that proof provides  a dense Zariski-open subset $U_k \subset \XP(S)$ such, for every character $[\bar r]\in U_k$, every $\bar r$--equivariant  map $\xi \colon \Pi \to \CP^1$ is a $\tau^{(k)}$--enhancement of $\bar r$. (Note that the $\bar r$--equivariance implies that, for every peripheral subgroup $\pi \in \Pi$, the point $\xi(\pi)$ is fixed by $\bar r(\pi)$.) Also, let $V$ be the set of $[\bar r]\in \XP(S)$ such that $\Tr \bar r(\alpha_\pi)\neq 0$ for every generator $\alpha_\pi$ of a peripheral subgroup $\pi \in \Pi$; this is equivalent to the property that no $\bar r(\pi)$ is cyclic of order 2, generated by a rotation of $180^\circ$. This subset $V$ is dense and Zariski-open in $\XP(S)$. Set $U$ to be the intersection $V \cap U_0 \cap U_1 \cap \dots \cap U_{k_0}$. 

Suppose that the character $[\bar r] \in U$ is $\phi$--invariant.  This means that $[\bar r]=[\bar r\circ \phi_*]$ in $\XP(S)$. Note that $\bar r$ is necessarily irreducible by definition of the $U_k$ in the proof of Lemma~\ref{lem:IdealTriangWeightsAndCharacters}. Therefore, there exists $A \in \PSL$ such that $\bar r\circ \phi_*(\alpha) = A\, \bar r(\alpha) A^{-1}$ for every $\alpha \in \pi_1(S)$. 

The group homomorphism $\phi_* \colon \pi_1(S) \to \pi_1(S)$ permutes the peripheral subgroups of $\pi_1(S)$, since it comes from a diffeomorphism of $S$. For such a peripheral subgroup $\pi \in \Pi$, the map $A \colon \CP^1 \to \CP^1$ sends a fixed point of $\bar r(\pi)$ to a fixed point of $\bar r( \phi_*(\pi))$ since $\bar r\circ \phi_*(\pi) = A\,\bar r(\pi) A^{-1}$. 

In view of the above observation, we would like to find an $\bar r$--equivariant map $\xi \colon \Pi \to \CP^1$ such that, in addition, $\xi(\phi_*(\pi) ) = A(\xi(\pi))$ for every $\pi \in \Pi$. As usual, we will proceed by considering the finitely many orbits of the group generated by the actions of $\pi_1(S)$ and $\phi_*$ on $\Pi$. However, there is a compatibility condition to check when there exists a nonzero power $j\in \Z$ and an element $\alpha \in \pi_1(S)$ such that $\phi^j_*(\pi) = \alpha \pi \alpha^{-1}$ for some peripheral subgroup $\pi$ (this happens precisely when $\pi$ corresponds to a puncture that is preserved by $\phi^j$); we then need that $A^j \xi(\pi) =\bar r(\alpha) \xi(\pi)$. 

To verify this compatibility condition, we use the fundamental property that  $\phi_*$ comes from an \emph{orientation-preserving} diffeomorphism of $S$. This implies that, if $\phi^j_*(\pi) = \alpha \pi \alpha^{-1}$ for some $\alpha \in \pi_1(S)$, then $\phi^j_*(\beta) = \alpha \beta \alpha^{-1}$ (as opposed to  $\alpha \beta^{-1} \alpha^{-1}$) for every $\beta \in \pi$. As a consequence, $A^j \bar r(\beta) A^{-j} =\bar  r\circ \phi_*^j (\beta ) = \bar r(\alpha) \bar r(\beta) \bar r(\alpha)^{-1}$ for every $\beta \in \pi$, and $\bar r(\alpha)^{-1}A^j$ commutes with every element of $\bar r(\pi)$. By definition of the subset $U \subset \XP(S)$, the cyclic subgroup $\bar r(\pi)\subset \PSL$ is nontrivial, and therefore fixes exactly 1 or 2 points of $\CP^1$. Since $\bar r(\alpha)^{-1}A^j$ commutes with every element of $\bar r(\pi)$ and since we excluded the case where $\bar r(\pi)$ has only two elements by our definition of $U$, an elementary argument in $\PSL$ shows that $\bar r(\alpha)^{-1}A^j$  fixes each of the fixed points of $\bar r(\pi)$. In particular, $A^j \xi(\pi) =\bar r(\alpha) \xi(\pi)$ for any choice of fixed point $\xi(\pi)$. 

Because of this compatibility property, we can proceed orbit by orbit and define an $\bar r$--equivariant map $\xi \colon \Pi \to \CP^1$ such that $\xi(\phi_*(\pi) ) = A(\xi(\pi))$ for every $\pi \in \Pi$. This map is a $\tau^{(k)}$--enhancement of $r$ for every $k=0$, $1$, \dots, $k_0$ by definition of $U$. As a consequence, it provides for every $k$ an  $\bar r$--equivariant pleated surface $\widetilde f^{(k)} \colon \widetilde S \to \HH^3$ pleated along $\tau^{(k)}$. Let $a^{(k)} \in \left( \C^* \right)^e$ be the edge weight system for $\tau^{(k)}$ defined by the pleated surface $\widetilde f^{(k)}$.

By definition, the $i$--th coordinate $a_i^{(0)} \in \C^*$ of $a^{(0)} \in \left( \C^* \right)^e$ is defined as follows.
Let $\gamma_i$ be the $i$--th edge of the ideal triangulation $\tau^{(0)}$, and lift it to and edge $\widetilde \gamma_i $  of the induced ideal triangulation $\widetilde \gamma_i$ of $\widetilde S$.  Let $\pi$, $\pi'$, $\pi''$ and $\pi''' \in \Pi$ be the peripheral subgroups associated to the four vertices of the square formed by the two faces of $\widetilde \tau^{(0)}$ that are adjacent to $\widetilde\gamma_i$. Then the edge weight $a_i^{(0)} \in \C^*$ is the crossratio of the four points $\xi(\pi)$, $\xi(\pi')$, $\xi(\pi'')$, $\xi(\pi''') \in \CP^1$. 

Similarly, to compute  the $i$--th coordinate $a_i^{(k_0)}$ of $a^{(k_0)} \in \left( \C^* \right)^e$, we consider the edge $\phi(\gamma_i)$ of $\tau^{(k_0)} = \phi\left( \tau^{(0)} \right)$,  its lift $\widetilde \phi(\widetilde\gamma_i) \subset \widetilde S$. The peripheral subgroups associated to the vertices of the square formed by the two faces of $\widetilde \tau^{(k_0)}$ that are adjacent to $\widetilde \phi(\widetilde\gamma_i) $ are then $\phi_*(\pi)$, $\phi_*(\pi')$, $\phi_*(\pi'')$ and $\phi_*(\pi'') \in \Pi$. By definition, $a_i^{(k_0)} \in \C^*$ is the crossratio of $\xi(\phi_*(\pi))$, $\xi(\phi_*(\pi'))$, $\xi(\phi_*(\pi''))$, $\xi(\phi_*(\pi''')) \in \CP^1$. There four points are also $A\,\xi(\pi)$, $A\,\xi(\pi')$, $A\,\xi(\pi'')$, $A\,\xi(\pi''') $ by construction of $\xi$, and their crossratio is therefore the same as the crossratio of  $\xi(\pi)$, $\xi(\pi')$, $\xi(\pi'')$, $\xi(\pi''')$ by invariance of the crossratio under $\PSL$. 

This proves that $a_i^{(k_0)} = a_i^{(0)}$ for every $i$, and therefore that the edge weight systems $a^{(k_0)} $ and $a^{(0)} \in \left( \C^* \right)^e$ coincide. Since our $\phi$--invariant $[\bar r]\in U$ was associated to these edge weights, this concludes the proof of the second statement of Lemma~\ref{lem:PeriodicEdgeWeightSystemsInvariantChar}. 
 \end{proof}

We will say that an edge weight system $a^{(0)}$, $a^{(1)}$, \dots,  $a^{(k_0)} \in \left( \C^* \right)^{e}$  for the ideal triangulation sweep $\tau = \tau^{(0)}$, $ \tau^{(1)}$, \dots, $ \tau^{(k_0-1)}$, $ \tau^{(k_0)}=\phi(\tau)$ is \emph{periodic} if $a^{(k_0)} = a^{(0)}$.

Given an ideal triangulation sweep $\tau = \tau^{(0)}$, $ \tau^{(1)}$, \dots, $ \tau^{(k_0-1)}$, $ \tau^{(k_0)}=\phi(\tau)$, Lemma~\ref{lem:PeriodicEdgeWeightSystemsInvariantChar} asserts that, provided we restrict attention to a Zariski-open dense subset of $\XP(S)$, finding all $\phi$--characters is equivalent to finding all periodic edge weight systems for this ideal triangulation sweep. Just beware that a $\phi$--invariant character $[\bar r] \in U$ can be associated to several periodic edge weight systems. 

An edge weight system $a^{(0)}$, $a^{(1)}$, \dots,  $a^{(k_0)} \in \left( \C^* \right)^{e}$  for an ideal triangulation sweep is completely determined by its first element $a^{(0)} \in \left( \C^* \right)^{e}$, which canonically embeds this set of edge weight systems in $\left( \C^* \right)^{e}$. In this space, the condition  that  $a^{(k_0)} = a^{(0)}$ is given by  $e$ rational equations in $e$ complex unknowns. One would consequently expect the solution space to be $0$--dimensional. However, these equations are not independent, and the dimension of the space of periodic edge weight system is actually higher. 

\begin{lem}
\label{lem:HypEdgeWeightSystemForSweep}
 For an  ideal triangulation sweep $\tau = \tau^{(0)}$, $ \tau^{(1)}$, \dots, $ \tau^{(k_0-1)}$, $ \tau^{(k_0)}=\phi(\tau)$, there is a unique periodic edge weight system $a_\hyp^{(0)}$, $a_\hyp^{(1)}$, \dots,  $a_\hyp^{(k_0-1)}$, $a_\hyp^{(k_0)}=a_\hyp^{(0)}$ whose associated character $[\bar r_\hyp] \in \XP(S)$ is represented by the restriction $\bar r_{\mathrm{hyp}} \colon \pi_1(S) \to \PSL$ of the monodromy of the complete hyperbolic metric of the mapping torus $M_{\phi, r}$. 
 
 In the space $ \left( \C^* \right)^{e}$ of edge weight systems for the  ideal triangulation $\tau^{(0)}$, those corresponding to periodic edge weight systems for the ideal triangulation sweep form a subspace of complex dimension $c$ near this ``hyperbolic'' edge weight system $a_\hyp^{(0)}$, where $c\geq 1$ is the number of orbits of the action of $\phi$ on the set of punctures of $S$. 
\end{lem}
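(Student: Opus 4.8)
The plan is to deduce both statements from the deformation theory of the mapping torus $M_{\phi, r}$ together with the correspondence between periodic edge weight systems for the sweep and $\phi$-invariant characters established in Lemma~\ref{lem:PeriodicEdgeWeightSystemsInvariantChar}. For the existence and uniqueness of the hyperbolic periodic edge weight system, I would first observe that the complete hyperbolic metric on $M_{\phi, r}$ gives an ideal triangulation of $M_{\phi, r}$ coming from the sweep (as noted before the lemma, following \cite{Agol}), and that its restriction to each level surface $\tau^{(k)}$ produces a genuine pleated surface; equivalently, one checks that for each $k$ the map $\xi$ built from the fixed points of the peripheral subgroups of $\bar r_\hyp$ is a $\tau^{(k)}$-enhancement. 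The shear-bend parameters of these pleated surfaces then give the desired system $a_\hyp^{(0)}, a_\hyp^{(1)}, \dots, a_\hyp^{(k_0)}$, and $a_\hyp^{(k_0)}=a_\hyp^{(0)}$ holds by the crossratio-invariance argument already used at the end of the proof of Lemma~\ref{lem:PeriodicEdgeWeightSystemsInvariantChar} (with $A=\bar r_\hyp(t)$, the image of the monodromy generator). Uniqueness of this particular system among periodic ones follows because $\bar r_\hyp$ is irreducible with no peripheral $\bar r_\hyp(\pi)$ cyclic of order $2$, so Lemma~\ref{lem:PeriodicEdgeWeightSystemsInvariantChar}'s construction shows the enhancement $\xi$ is unique up to the $\PSL$-action, hence the resulting edge weight system is unique. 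One should also verify that $a_\hyp^{(k)}\neq -1$ for each $k$, i.e. no diagonal-exchange edge of a level triangulation is ``flat''; this is automatic because a pleated surface pleated along $\tau^{(k)}$ sends the two faces adjacent to any edge to two distinct ideal triangles, forcing the crossratio to lie in $\C\setminus\{0,-1\}$.

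For the dimension statement, the idea is to identify the space of periodic edge weight systems near $a_\hyp^{(0)}$ with (an open subset of) the $\PSL$-character variety of the mapping torus near $[\bar r_\hyp]$. By the irreducibility and non-degeneracy conditions satisfied near $[\bar r_\hyp]$, Lemma~\ref{lem:PeriodicEdgeWeightSystemsInvariantChar} shows that the assignment sending a periodic edge weight system to its $\phi$-invariant character $[\bar r]\in\XP(S)$ is (locally near $a_\hyp^{(0)}$) a bijection onto the fixed-point set $\XP(S)^\phi$ near $[\bar r_\hyp]$; indeed, given such a $\phi$-invariant $[\bar r]$ with conjugating element $A$, the enhancement $\xi$ with $\xi(\phi_*(\pi))=A\xi(\pi)$ is locally unique (the finitely many orbit choices are rigidified by continuity from the hyperbolic point), so the edge weight system it produces is uniquely determined. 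It therefore suffices to compute the local dimension of $\XP(S)^\phi$ at $[\bar r_\hyp]$. Here I would invoke the standard fact (Weil rigidity / Thurston's deformation theory, as cited in \S\ref{subsubsect:InvariantCharacters} via \cite{Weil1, Weil2, GarRag, ThuNotes, BenPet}) that $\XP(M_{\phi, r})$ is smooth of complex dimension $c$ near $[\bar r_\hyp]$, where $c$ is the number of cusps of $M_{\phi, r}$, together with the observation recalled in the excerpt that restriction $\XP(M_{\phi, r})\to\XP(S)^\phi$ is a local isomorphism near the hyperbolic character (an irreducible $\phi$-invariant character of $\pi_1(S)$ extends essentially uniquely over $\pi_1(M_{\phi, r})$, the extension being determined by the conjugating element $A=\bar r(t)$, which is pinned down up to the center by irreducibility). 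Since $c$ equals the number of orbits of $\phi$ on the punctures of $S$, this gives the claimed complex dimension $c$.

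The step I expect to be the main obstacle is making the local identification of periodic edge weight systems with $\XP(S)^\phi$ fully rigorous --- specifically, controlling the discrete choices. Two issues arise: first, in the construction of the enhancement $\xi$ one chooses, for each peripheral subgroup, one of its (one or two) fixed points, and for each orbit of the $\langle\pi_1(S),\phi_*\rangle$-action one makes such a choice; near the hyperbolic point these choices can be made consistently and continuously, but one must argue that every nearby periodic system indeed arises from the branch containing $a_\hyp$, rather than from some other branch. Second, one must ensure that the resulting map is not merely injective but a local homeomorphism, i.e. that it is open --- this requires checking that deformations of $[\bar r]\in\XP(S)^\phi$ genuinely deform the crossratios $a_i$, which follows from the fact that the edge weights together determine $[\bar r]$ (the forward direction of the construction in Lemma~\ref{lem:IdealTriangWeightsAndCharacters}) combined with smoothness of $\XP(S)^\phi$ at $[\bar r_\hyp]$. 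Once these local-choice bookkeeping points are settled, the dimension count is immediate from the cited rigidity results, so the geometric input is entirely classical and the work is in the passage between the two descriptions.
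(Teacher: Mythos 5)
Your overall strategy matches the paper's: build the enhancement $\xi$ from the fixed points of the parabolic peripheral elements, use crossratio-invariance under $\PSL$ for periodicity, and deduce the dimension from the deformation theory of $\XP(M_{\phi,r})$ near the complete structure. However, your uniqueness argument has a gap as written. You claim uniqueness of the enhancement follows because ``$\bar r_\hyp$ is irreducible with no peripheral $\bar r_\hyp(\pi)$ cyclic of order $2$, so Lemma~\ref{lem:PeriodicEdgeWeightSystemsInvariantChar}'s construction shows the enhancement $\xi$ is unique.'' But those two conditions are exactly the genericity hypotheses that make Lemma~\ref{lem:PeriodicEdgeWeightSystemsInvariantChar}'s construction \emph{work}; they do not make it produce a unique enhancement. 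For a generic character in $U$ each peripheral image fixes \emph{two} points of $\CP^1$, so up to $2^c$ enhancements exist, and the construction involves genuine choices. The reason the enhancement is unique for $[\bar r_\hyp]$ is sharper: each $\bar r_\hyp(\pi)$ is \emph{parabolic}, hence fixes a single point of $\CP^1$, so there is literally only one $\bar r_\hyp$-equivariant map $\Pi\to\CP^1$; combined with injectivity of $\bar r_\hyp$ (which forces the fixed points of the ends of any edge of $\widetilde\tau^{(k)}$ to be distinct), this gives the unique $\tau^{(k)}$-enhancement for every $k$, independently of the Zariski-open sets of Lemma~\ref{lem:PeriodicEdgeWeightSystemsInvariantChar}. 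This is the step you need to make explicit.

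On the dimension count you go further than the paper: you aim for a local \emph{bijection} between periodic edge weight systems near $a_\hyp^{(0)}$ and $\XP(S)^\phi$ near $[\bar r_\hyp]$, and you correctly flag that controlling the branch of fixed-point choices is the hard part. The paper sidesteps this: nearby peripheral images need not be parabolic and may have two fixed points, so the map from periodic edge weight systems to $\phi$-invariant characters is only asserted to be at most $2^c$ to $1$ near $a_\hyp^{(0)}$, which together with Lemma~\ref{lem:PeriodicEdgeWeightSystemsInvariantChar} (surjectivity onto the Zariski-dense set) and the fact that $\XP(S)^\phi$ has dimension $c$ already forces the source to have dimension $c$. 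Your stronger statement is morally true (and is effectively what Remark~\ref{rem:HypEdgeWeightSystemForSweep} alludes to), but the finite-to-one argument is enough for what the Lemma claims and avoids the branch-tracking bookkeeping you identify as the obstacle.
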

\begin{proof}
We rely on the following two properties of the homomorphism $\bar r_\hyp \colon \pi_1(S) \to \PSL$. The first one is that, for every peripheral subgroup $\pi \in \Pi$, the image $\bar r_\hyp(\pi)$ is a parabolic subgroup of $\PSL$, and in particular fixes a unique point of $\CP^1$. The second property is that $\bar r_\hyp$ is injective. 

Because of the first property, there is a unique $\bar r_\hyp$--equivariant map $\xi \colon \Pi \to \CP^1$. Two parabolic subgroups of $\PSL$ that fix the same point of $\CP^1$ necessarily commute with each other and, any two distinct parabolic subgroups $\pi$, $\pi' \in \Pi$ generate a free group on two generators in $\pi_1(S)$. Since $\bar r_\hyp$ is injective it follows that, for distinct $\pi$, $\pi' \in \Pi$, the parabolic subgroups $\bar r_\hyp(\pi)$ and $\bar r_\hyp(\pi')$ fix different points of $\CP^1$. As a consequence, for any ideal triangulation $\tau$, the map $\xi \colon \Pi \to \CP^1$ is a $\tau$--enhancement of $\bar r_\hyp$ and this enhancement is unique. 

We can then use this enhancement $\bar \xi$ to construct pleated surfaces $\widetilde f_\hyp^{(k)} \colon \widetilde S \to \HH^3$, respectively pleated along the ideal triangulations $\tau^{(k)}$. The shear-bend parameters of these pleated surfaces provide an edge weight system $a_\hyp^{(0)}$, $a_\hyp^{(1)}$, \dots,  $a_\hyp^{(k_0)}$ for the ideal triangulation sweep. Since $[\bar r_\hyp \circ \pi_*]=[\bar r_\hyp]$, it follows from the uniqueness of the $\tau^{(0}$--enhancement $\xi$ that the pleated surfaces $\widetilde f_\hyp^{(k)} $ and $\widetilde f_\hyp^{(k)}  \circ \widetilde \phi$ has the same shear-bend parameters (see  the end of the proof or Lemma~\ref{lem:PeriodicEdgeWeightSystemsInvariantChar}), which proves that $a_\hyp^{(k_0)}=a_\hyp^{(0)}$ and completes the proof of the first statement. 

For the second statement, we use the well-known property that the character variety $\XP(M_{\phi, r})$ is smooth near its hyperbolic character, and that its complex dimension there is equal to the number $c$ of ends of the mapping torus $M_{\phi, r}$ (see for instance \cite[\S 5]{ThuNotes} or \cite[\S E.6]{BenPet}). It follows that the set of $\phi$--invariant characters in $\XP(S)$ has dimension $c$ near the restriction $[\bar r_\hyp] \in \XP(S)$. Note that $c$ is also the number of orbits of the action of $\phi$ on the set of punctures of the surface $S$. Our proof of Lemma~\ref{lem:PeriodicEdgeWeightSystemsInvariantChar} shows that every $\phi$--invariant  character $[\bar r] \in \XP(S)$ near  $[\bar r_\hyp]$ admits at most $2^c$ $\tau^{(0)}$--enhancements, since $r(\alpha_\pi)$ can have at most two fixed points for every parabolic subgroup $\pi$. In other words, the map which associates a $\phi$--invariant character to each periodic edge weight system for the ideal triangulation sweep is at most $2^c$ to $1$ near $a_\hyp$. This proves that this space of periodic edge weight systems has complex dimension $c$ near $a_\hyp$. (See comment in Remark~\ref{rem:HypEdgeWeightSystemForSweep} below.)
\end{proof}

\begin{rem}
\label{rem:HypEdgeWeightSystemForSweep}
 In the second part of Lemma~\ref{lem:HypEdgeWeightSystemForSweep}, we have been deliberately vague about the meaning of ``a subspace of complex dimension $c\,$''. With a little more work, it can be shown that this subspace is actually a $c$--dimensional complex submanifold of $\big( \C^* \big)^e$ near $a_\hyp^{(0)}$. See \cite[\S 15.2.7]{Mart} for a closely related property. However, we will not need this fact. 
\end{rem}

 \subsection{The Chekhov-Fock algebra of an ideal triangulation, and its representations}
 \label{subsect:CheFock}
  
 The \emph{Chekhov-Fock algebra} $\T_\tau^q(S) = \C[X_1^{\pm1}, X_2^{\pm1}, \dots, X_e^{\pm1}]_\tau^q$ of the ideal triangulation $\tau$ is the algebra of Laurent polynomials in variables $X_1$, $X_2$, \dots, $X_e$ associated to the edges of $\tau$, which do not commute but instead are subject to the  skew-commutativity relations
 $$
 X_i X_j = q^{2\sigma_{ij}} X_jX_i,
 $$
 where each $\sigma_{ij} \in \Z$ is an integer determined by adjacency properties between the $i$--th and $j$--th edges in the triangulation $\tau$. See \cite{CheFoc1,Liu, BonLiu} for precise definitions of these coefficients $\sigma_{ij}$. 
 
Algebraically, $\T_\tau^q(S)$ is what is known as a quantum torus. In particular, its representation theory is fairly simple, at least once we know how to diagonalize over the integers the antisymmetric matrix whose entries are the $\sigma_{ij}$. This is done in \cite{BonLiu}. 

The classification of irreducible representations of $\T_\tau^q(S)$ involves special elements $H_v \in \T_\tau^q(S)$ associated to each puncture $v$ of $S$. If $\gamma_{i_1}$, $\gamma_{i_2}$, \dots, $\gamma_{i_j}$ are the edges  of $\tau$ that are adjacent to $v$ (with an edge occurring twice in this list if both of its ends lead to $v$,), then
$$
H_v = q^{-\sum_{1 \leq k < k' \leq j}  \sigma_{kk'} }  X_{i_1} X_{i_2} \dots X_{i_j}
$$
where the $\sigma_{kk'}$ are the coefficients occurring in the skew-commutativity relations defining $\T_\tau^q(S) $. The power of $q$ is specially designed so that this element $H_v \in \T_\tau^q(S)$ does not depend on the ordering of the edges $\gamma_{i_1}$, $\gamma_{i_2}$, \dots, $\gamma_{i_j}$. %Similarly, we consider a global element
%$$
%H = q^{-\sum^{1 \leq i < i' \leq e}  \sigma_{ii'} }  X_{1} X_{2} \dots X_{e}. 
%$$

\begin{prop} [{\cite[Theorem 21]{BonLiu}}]
\label{prop:RepsCheFock}
If $q$ is a primitive $n$--root of unity with $n$ odd, and if $\rho \colon \T_\tau^q(S) \to \End(V)$ is an irreducible representation of the Chekhov-Fock algebra $\T_\tau^q(S)$, then $\dim V = n^{3g+p-3}$ if the surface $S$ has genus $g$ and $p$ punctures, and there exist numbers $a_1$, $a_2$, \dots, $a_e$, $h_{v_1}$, $h_{v_2}$, \dots, $h_{v_p}\in \C^*$ such that 
\begin{align*}
 \rho(X_i^n) &= a_i \, \Id_V &
 \rho(H_{v_j}) &= h_{v_j} \,\Id_V %&
% \rho(H) &= h \,\Id_V 
\end{align*}
for every edge $\gamma_i$ and puncture $v_j$ of the ideal triangulation $\tau$. 

In addition, $\rho$ is determined up to isomorphism by this set of invariants $a_1$, $a_2$, \dots, $a_e$, $h_{v_1}$, $h_{v_2}$, \dots, $h_{v_p}\in \C^*$, and such a collection of numbers is realized by an irreducible representation if and only if it satisfies the  condition that
$$h_{v}^n = a_{i_1} a_{i_2} \dots a_{i_j}$$
for every puncture $v$ that is  adjacent to the edges $\gamma_{i_1}$, $\gamma_{i_2}$, \dots, $\gamma_{i_j}$ of $\tau$ (where an edge occurs twice in this list when both of its ends lead to $v$). 
\qed
\end{prop}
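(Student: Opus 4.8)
The plan is to treat this as a statement about the representation theory of a quantum torus at a root of unity, along the standard route: compute the center, reduce irreducible representations to those with a fixed central character, and identify each central fibre as a full matrix algebra. Since $\T_\tau^q(S)=\C[X_1^{\pm1},\dots,X_e^{\pm1}]_\tau^q$ is the quantum torus attached to the antisymmetric integer matrix $\Sigma=(\sigma_{ij})$, the first thing I would do is pin down its center. Because $q$ is a primitive $n$--th root of unity we have $q^n=1$, so each $X_i^n$ is central; and since cyclically reordering a monomial costs a factor $q^{2\binom n2(\ast)}=(q^n)^{(\ast)}=1$, one gets $H_v^n=\prod_k X_{i_k}^n$ over the edges at $v$ counted with multiplicity. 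Next I would check, directly from the combinatorial definition of the $\sigma_{ij}$, that the exponent vector $\chi_v\in\Z^e$ of $H_v$ lies in $\ker\Sigma$ — the quantum shadow of the classical fact that the puncture trace is a Casimir of the Chekhov--Fock bracket — so that $H_v$ is central as well. Then the general description of the center of a quantum torus at a primitive $n$--th root of unity identifies $Z(\T_\tau^q(S))$ with $\C[L]$ for the lattice $L=\{a\in\Z^e:\Sigma a\in n\Z^e\}$, and one verifies (again a matter of the combinatorics of $\Sigma$) that $L=n\Z^e+\sum_v\Z\chi_v$; concretely, $Z(\T_\tau^q(S))$ is generated by the $X_i^{\pm n}$ and $H_v^{\pm1}$ subject only to $H_v^n=\prod_k X_{i_k}^n$.

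With the center in hand, the two conclusions follow formally once one structural fact is in place. If $\rho$ is irreducible and finite dimensional, Schur's lemma makes the center act by scalars, producing $a_i=\rho(X_i^n)$ and $h_{v_j}=\rho(H_{v_j})$ (nonzero, since these are units) satisfying the stated relation. For the converse and the dimension, I would fix an admissible tuple, i.e.\ a character $\chi\colon Z(\T_\tau^q(S))\to\C$, form the finite--dimensional algebra $A_\chi=\T_\tau^q(S)\otimes_{Z}\C_\chi$ whose simple modules are exactly the irreducibles with central character $\chi$, and argue that $A_\chi\cong M_{n^m}(\C)$ with $m=3g+p-3$. The tool is the normal form for antisymmetric integer matrices: choose $P\in\mathrm{GL}_e(\Z)$ with $P^{\mathsf T}\Sigma P=\bigoplus_{k=1}^m\left(\begin{smallmatrix}0&d_k\\-d_k&0\end{smallmatrix}\right)\oplus 0$, $d_1\mid\cdots\mid d_m$; necessarily $2m=e-p$ because $\operatorname{rank}_\Q\Sigma=e-p$ (its kernel is spanned by the $\chi_v$), and $e-p=6g+2p-6$ gives $m=3g+p-3$. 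The multiplicative change of variables $Y_j=q^{(\ast)}\prod_i X_i^{P_{ij}}$ then presents $\T_\tau^q(S)$ as $\bigl(\bigotimes_k B_k\bigr)\otimes\C[Y_{2m+1}^{\pm1},\dots,Y_e^{\pm1}]$ with $B_k=\C\langle Y_{2k-1}^{\pm1},Y_{2k}^{\pm1}\mid Y_{2k-1}Y_{2k}=q^{2d_k}Y_{2k}Y_{2k-1}\rangle$ and the last tensor factor central; specializing via $\chi$ turns each $B_k$ into a finite algebra on two units with this relation and prescribed $n$--th powers, which — provided $q^{2d_k}$ has order exactly $n$ — is the clock--and--shift algebra, isomorphic to $M_n(\C)$. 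Hence $A_\chi\cong M_n(\C)^{\otimes m}\cong M_{n^m}(\C)$, which has a unique simple module, of dimension $n^m$; pulling it back along $\T_\tau^q(S)\twoheadrightarrow A_\chi$ yields, for each admissible tuple, a unique irreducible of dimension $n^{3g+p-3}$, and conversely every irreducible is recovered from its central character. (Equivalently, one may package this by saying that at a root of unity $\T_\tau^q(S)$ is Azumaya over its center of PI--degree $n^m$.)

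The hard part — and the only place where the geometry of surfaces really enters — will be justifying that $q^{2d_k}$ has order exactly $n$ for every $k$, i.e.\ that the nonzero elementary divisors $d_k$ of the Chekhov--Fock matrix $\Sigma$ are coprime to $n$; equivalently, that $\Sigma$ retains full rank $e-p$ modulo every prime $\ell$ dividing $n$. Without this the factors $B_k$ collapse to $M_{n/\gcd(n,d_k)}(\C)$ and the representation dimension drops below $n^{3g+p-3}$. I would prove it by a combinatorial analysis of ideal triangulations: show that the $d_k$ are powers of $2$ — so that the oddness of $n$ closes the argument — for instance by tracking how the Smith normal form of $\Sigma$ transforms under diagonal exchanges and edge reindexings (each an explicit unimodular-type modification) and reducing to a small base triangulation. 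Everything else — the center computation and the passage to a matrix algebra — is routine quantum-torus bookkeeping once this divisor estimate is available.
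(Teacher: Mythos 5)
The paper itself does not prove this statement: it is quoted verbatim from \cite[Theorem 21]{BonLiu} and marked with \qed. Your sketch nevertheless reconstructs the approach of that reference essentially correctly. In particular, the paper confirms (Step~1 of \S\ref{sect:ComputeIntertwinerGeneralSurf}) that the block diagonalization you invoke is \cite[Prop.~5]{BonLiu}, and that the blocks are $g$ copies of $\left(\begin{smallmatrix}0&-2\\2&0\end{smallmatrix}\right)$, $2g+p-3$ copies of $\left(\begin{smallmatrix}0&-1\\1&0\end{smallmatrix}\right)$, and $p$ copies of $(0)$; so the nonzero elementary divisors are indeed $1$ and $2$, and your observation that oddness of $n$ keeps each $q^{2d_k}$ of order exactly $n$ is precisely why the hypothesis ``$n$ odd'' recurs throughout the paper. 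Your bookkeeping of the center, the reduction of irreducibles to a central character, and the identification of each central fibre with $M_{n^m}(\C)$ are all in the right place, and the arithmetic $m=\tfrac12(e-p)=3g+p-3$ is correct.

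One local step needs repair. To show the $d_k$ are powers of $2$, you propose to track the symplectic normal form of $\Sigma$ under diagonal exchanges and edge reindexings and reduce to a small base triangulation. But a diagonal exchange does \emph{not} conjugate $\Sigma$ by a matrix in $\mathrm{GL}_e(\Z)$: the coordinate change between the quantum tori of the two triangulations is birational and non-monomial (note the $(1+q^{-1}X_{i_0})$ factors in~(\ref{eqn:CheFockIsomEmbeddedDiagEx})), so there is no literal ``unimodular-type modification'' of $\Sigma$ to track, and the invariance of the normal form under flips is a theorem rather than a formality. The argument in \cite{BonLiu} sidesteps this entirely by computing the block form directly from the combinatorics of an arbitrary ideal triangulation. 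Your fallback of computing the Smith form on a fixed model triangulation of each topological type would close the gap, but the ``tracking under exchanges'' version as you state it would not go through.
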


In particular, an irreducible representation $\rho \colon \T_\tau^q(S) \to \End(V)$ is determined by an edge weight system $a=(a_1, a_2, \dots, a_e) \in \left( \C^* \right)^e$, up to a finite number $n^p$ of choices of roots for data determined by $a$. There is a similar classification when $n$ is even, but it is somewhat more cumbersome. This general case involves a global invariant $h$ such that $h^n=a_1a_2 \dots a_e$ and $h^2 = h_{v_1} h_{v_2} \dots h_{v_p}$, but this $h$ is uniquely determined by the $h_{v_j}$ when $n$ is odd. 

The key idea underlying \cite{BonLiu}  is that this classification is well-behaved with respect to the ideal triangulation sweeps of \S \ref{subsect:SweepInvariantChar}. The connection is provided by the \emph{Chekhov-Fock coordinate change isomorphisms} $\Phi_{\tau \tau'}^q \colon \widehat \T_{\tau'}^q(S) \to \widehat \T_\tau^q(S)$ which, for any two ideal triangulations $\tau$ and $\tau'$, identify the fraction algebras $ \widehat \T_\tau^q(S)$ and $ \widehat \T_{\tau'}^q(S) $ of the two Chekhov-Fock algebras $  \T_\tau^q(S)$ and $  \T_{\tau'}^q(S) $. These isomorphisms satisfy the fundamental relation that
 $$
 \Phi_{\tau \tau''}^q = \Phi_{\tau \tau'}^q \circ \Phi_{\tau' \tau''}^q
 $$
 for any three ideal triangulations $\tau$, $\tau'$, $\tau''$, and  are given by explicit formulas when we are given an ideal triangulation sweep connecting the two ideal triangulations. These formulas can be found in  \cite{CheFoc1, CheFoc2, Liu, BonLiu}, but   we can illustrate their flavor by giving them in two simple cases.
 
 If $\tau'$ is obtained from $\tau$ by an edge reindexing, and more precisely if the $i$--th edge $\gamma_i'$ of $\tau'$ is the $\sigma(i)$--th edge $\gamma_{\sigma(i)}$ of $\tau$ for some permutation $\sigma$ of $\{ 1,2, \dots, e\}$, then $\Phi_{\tau \tau'}^q \colon \widehat \T_{\tau'}^q(S) \to \widehat \T_\tau^q(S)$ is the unique algebra homomorphism such that $\Phi_{\tau \tau'}^q (X_i') = X_{\sigma(i)}$. 
 
If $\tau'$ is obtained from $\tau$ by a diagonal exchange as in Figure~\ref{fig:DiagonalExchange}, with this edge indexing, and if the edges $\gamma_{i_1}$, $\gamma_{i_2}$, $\gamma_{i_3}$, $\gamma_{i_4}$ are all distinct, then $\Phi_{\tau \tau'}^q \colon \widehat \T_{\tau'}^q(S) \to \widehat \T_\tau^q(S)$ is the unique algebra homomorphism such that
\begin{equation}
 \label{eqn:CheFockIsomEmbeddedDiagEx}
 \Phi_{\tau \tau'}^q (X_i') =
\begin{cases}
X_i &\text{if } i \neq i_0, i_1, i_2, i_3, i_4\\
X_{i_0}^{-1} &\text{if } i=i_0\\
X_i\big(1+ q^{-1} X_{i_0} \big)  &\text{if } i=i_1 \text{ or } i_3\\
X_i\big(1+q^{-1}  X_{i_0}^{-1} \big)^{-1} &\text{if }  i=i_2 \text{ or } i_4
\end{cases}
\end{equation}
Note the analogy with (\ref{eqn:ShearbendEmbeddedDiagEx}). 

The formulas for a diagonal exchange taking place in a square where some sides are identified are given in \cite{Liu, BonLiu}. We will encounter the extreme case of the one-puncture torus in \S  \ref{subsect:ChekhovFockPunctTorus}.

Given two ideal triangulations $\tau$ and $\tau'$, and a representation  $\rho \colon \T_\tau^q(S) \to \End(V)$, it would be natural to define a representation  $\rho' \colon \T_{\tau'}^q(S) \to \End(V)$ as $\rho' = \rho \circ \Phi_{\tau\tau'}^q$. However, we need to be careful to make sense of this statement, as $\Phi_{\tau\tau'}^q$ is valued in the fraction algebra of $\T_\tau^q(S)$, and we have to worry about denominators.  

We will say that the representation $ \rho \circ \Phi_{\tau\tau'}^q \colon \T_{\tau'}^q(S) \to \End(V)$  \emph{makes sense} if, for every $W' \in  \T_{\tau'}^q(S) $, its image $ \Phi_{\tau \tau'}^q(W') \in  \widehat  \T_\tau^q(S) $ can be written as a right and a left fraction
$$
 \Phi_{\tau \tau'}^q(W')  = P_1 Q_1^{-1} = Q_2^{-1} P_2,
$$
with $P_1$, $Q_1$, $P_2$, $Q_2 \in    \T_\tau^q(S)$,  such that $\rho(Q_1)$ and $\rho(Q_2)$ are invertible in $\End(V)$. We then define
$$
 \rho \circ \Phi_{\tau\tau'}^q  (W) = \rho(P_1) \rho(Q_1)^{-1} = \rho(Q_2)^{-1} \rho(P_2). 
$$
The condition on left and right fractions  guarantees that this uniquely determines an  algebra homomorphism $ \rho \circ \Phi_{\tau\tau'}^q \colon \T_{\tau'}^q(S) \to \End(V)$. 

\begin{prop} [{\cite[Lemma 27]{BonLiu}}]
\label{prop:CheFockRepsCompatibleIdealTriangChange}
 Let $\tau = \tau^{(0)}$, $ \tau^{(1)}$, \dots, $ \tau^{(k_0-1)}$, $ \tau^{(k_0)}=\tau'$ be an ideal triangulation sweep connecting the two ideal triangulations $\tau$ and $\tau'$, and let $a=a^{(0)}$, $a^{(1)}$, \dots,  $a^{(k_0)} =a' \in \left( \C^* \right)^{e}$ be an edge weight system for this sweep. If $a=(a_1, a_2, \dots, a_e)$, let $\rho \colon \T_\tau^q(S) \to \End(V)$ be the irreducible representation associated by Proposition~{\upshape\ref{prop:RepsCheFock}} to these  edge weights $a_i$ and to puncture invariants $h_v$. Then the representation $\rho' = \rho \circ \Phi_{\tau\tau'}^q \colon \T_{\tau'}^q(S) \to \End(V)$ makes sense, and is classified by the edge weight system $a'=(a_1', a_2', \dots, a_e')$ for $\tau'$ and by the same puncture  invariants $h_v$  as $\rho$. \qed
\end{prop}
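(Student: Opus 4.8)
The plan is to reduce the statement to the case of a single elementary move --- an edge reindexing or a diagonal exchange --- and then verify the claim directly for those two cases, using the explicit formulas (\ref{eqn:CheFockIsomEmbeddedDiagEx}) and their analogues. For the reduction, I would invoke the cocycle relation $\Phi_{\tau\tau''}^q = \Phi_{\tau\tau'}^q \circ \Phi_{\tau'\tau''}^q$: writing the sweep as a composition of elementary moves $\tau^{(k-1)} \to \tau^{(k)}$, the coordinate change $\Phi_{\tau\tau'}^q$ factors as $\Phi_{\tau^{(0)}\tau^{(1)}}^q \circ \dots \circ \Phi_{\tau^{(k_0-1)}\tau^{(k_0)}}^q$. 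If at each stage $k$ one has an irreducible representation $\rho^{(k)}$ of $\T_{\tau^{(k)}}^q(S)$ classified by the edge weight system $a^{(k)}$ and puncture invariants $h_v$, and if $\rho^{(k-1)}\circ\Phi_{\tau^{(k-1)}\tau^{(k)}}^q$ makes sense and equals $\rho^{(k)}$, then composing along the sweep gives the full statement. So it suffices to treat one elementary move, with $a^{(k)}$ obtained from $a^{(k-1)}$ by the shear-bend formula (\ref{eqn:ShearbendEmbeddedDiagEx}) (or its reindexing/side-identified variant).

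For an edge reindexing the claim is immediate: $\Phi_{\tau\tau'}^q(X_i') = X_{\sigma(i)}$ lies in $\T_\tau^q(S)$ itself, so $\rho\circ\Phi_{\tau\tau'}^q$ trivially makes sense, it sends $(X_i')^n$ to $X_{\sigma(i)}^n$, hence acts by $a_{\sigma(i)} = a_i'$, and it sends $H_{v}'$ to $H_v$ (the reindexing permutes the edges around $v$ but the $q$-power correction in the definition of $H_v$ makes it order-independent), hence acts by $h_v$. For a diagonal exchange, I would work with the embedded-square formula (\ref{eqn:CheFockIsomEmbeddedDiagEx}) first. One must check (i) that $\rho\circ\Phi_{\tau\tau'}^q$ makes sense, i.e. that each $\Phi_{\tau\tau'}^q(X_i')$ admits left and right fraction expressions $P_1Q_1^{-1} = Q_2^{-1}P_2$ with $\rho(Q_i)$ invertible; the only denominators are $(1+q^{-1}X_{i_0})$ and $(1+q^{-1}X_{i_0}^{-1})$, so invertibility of $\rho(1+q^{-1}X_{i_0})$ is what is needed, and that holds precisely because $\rho(X_{i_0}^n) = a_{i_0}\,\Id$ with $a_{i_0}\neq -1$, so the minimal polynomial of $\rho(X_{i_0})$ divides $X^n - a_{i_0}$, whose roots are the $n$-th roots of $a_{i_0}$, none of which equals $-q^{-1}$ (as $(-q^{-1})^n = -q^{-n} = -1 \neq a_{i_0}$... wait, need $a_{i_0}\neq -1$, which is exactly the hypothesis propagated along the sweep). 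Then (ii) one computes $\rho\circ\Phi_{\tau\tau'}^q((X_i')^n)$: raising the quantum formula to the $n$-th power and using that $q$ is a primitive $n$-th root of unity, the $q$-twisted products collapse --- e.g. $\bigl(X_i(1+q^{-1}X_{i_0})\bigr)^n = X_i^n\,(1 + X_{i_0})^n$ after the standard quantum-binomial/Frobenius computation, since $X_i X_{i_0} = q^{\pm2}X_{i_0}X_i$ and $(1+q^{-1}X_{i_0})(1+q^{-3}X_{i_0})\cdots = $ a $q$-Pochhammer that telescopes to $1 + X_{i_0}^n$ up to a power of $q$ that vanishes --- yielding $a_i(1+a_{i_0}) = a_i'$ for $i \in \{i_1,i_3\}$, $a_i(1+a_{i_0}^{-1})^{-1} = a_i'$ for $i\in\{i_2,i_4\}$, $a_{i_0}^{-1} = a_{i_0}'$, matching (\ref{eqn:ShearbendEmbeddedDiagEx}) exactly. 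And (iii) one checks $\rho\circ\Phi_{\tau\tau'}^q(H_v') = h_v\,\Id$: the edges around $v$ change only through the diagonal exchange, and a direct bookkeeping of the $q$-powers shows the puncture monomial is preserved; alternatively one notes $h_v^n = \prod a_{i_j}$ is already forced by Proposition~\ref{prop:RepsCheFock} together with step (ii), and the sign/$n$-th-root ambiguity in $h_v$ is pinned down by a single low-degree check. Finally, the assertion that $\rho' = \rho\circ\Phi_{\tau\tau'}^q$ is classified by $(a', h_v)$ is then Proposition~\ref{prop:RepsCheFock}, once we know $\rho'$ is irreducible --- and it is, since $\Phi_{\tau\tau'}^q$ is an isomorphism of fraction algebras, so $\rho'$ has the same image algebra (inside $\End(V)$, after localizing) as $\rho$.

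The main obstacle is step (ii), the $n$-th-power computation in the presence of side identifications. The embedded case (\ref{eqn:CheFockIsomEmbeddedDiagEx}) is clean, but for the one-puncture torus (and other small surfaces) the square has $i_1 = i_3$ and $i_2 = i_4$, so the factors $X_{i_1}(1+q^{-1}X_{i_0})$ and $X_{i_3}(1+q^{-1}X_{i_0})$ are no longer independent and the coordinate-change formula itself is different (it is the one deferred to \S\ref{subsect:ChekhovFockPunctTorus} and to \cite{Liu, BonLiu}). One must redo the quantum-binomial collapse in that setting, tracking carefully the extra $q$-commutation between the two occurrences of $X_{i_0}$-adjacent edges; the key algebraic input is again that $q$ is a primitive $n$-th root of unity of odd order, which both kills the mixed $q$-power terms and guarantees the relevant localizations exist. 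Since this proposition is quoted verbatim from \cite[Lemma 27]{BonLiu}, I would in the write-up simply cite that reference for the general side-identified formulas and the corresponding computation, and present the embedded case as the illustrative prototype, exactly as the surrounding exposition does for the formulas themselves.
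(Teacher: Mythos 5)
The paper does not actually prove this proposition: it is stated with a terminal \qed and attributed to \cite[Lemma 27]{BonLiu}. Your proposal is therefore a reconstruction of the cited argument rather than a comparison with a proof given in this paper, and as such it is a faithful sketch of the right strategy. The reduction to elementary moves via the cocycle relation $\Phi_{\tau\tau''}^q = \Phi_{\tau\tau'}^q \circ \Phi_{\tau'\tau''}^q$, the treatment of edge reindexings, the invertibility check for the denominators, the $q$--Pochhammer collapse giving $a_i(1+a_{i_0}) = a_i'$ for $i=i_1,i_3$ and the analogous statements for $i_0, i_2, i_4$, and the observation that $\rho' = \rho\circ\Phi_{\tau\tau'}^q$ is irreducible because $\Phi_{\tau\tau'}^q$ is an isomorphism of fraction algebras are all correct and are the ingredients of the proof in \cite{BonLiu}.

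Two small remarks. First, in the invertibility check the eigenvalue of $\rho(X_{i_0})$ to avoid is $-q$, not $-q^{-1}$: $\rho(1+q^{-1}X_{i_0})$ is singular exactly when $1+q^{-1}\lambda = 0$ for some eigenvalue $\lambda$ of $\rho(X_{i_0})$, i.e. $\lambda = -q$, and $(-q)^n = -1$ since $n$ is odd, so $a_{i_0}\neq -1$ is exactly what rules this out. Your arithmetic lands in the same place but names the wrong element. Second, for step (iii) your primary argument (a direct bookkeeping showing $\Phi_{\tau\tau'}^q(H_v') = H_v$ as an identity in the fraction algebra $\widehat\T_\tau^q(S)$) is the right one; the ``alternative'' you float --- deducing $h_v$ from $h_v^n = \prod a_{i_j}'$ plus a low-degree check --- does not by itself select $h_v$ among its $n$ possible $n$--th roots, so it cannot substitute for the identity. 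These are cosmetic. You correctly flag the side-identified squares as the genuinely more laborious case and appropriately defer to the reference for them.
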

 
\subsection{Chekhov-Fock intertwiners for surface diffeomorphisms} 
\label{subsect:CheFockIntertwinerInvariantDiffeomorphism}

We can now use a version of the arguments of \S \ref{subsect:KauffmanIntertwinerInvariantDiffeomorphism} to construct invariants for orientation-preserving diffeomorphisms $\phi \colon S \to S$ endowed with $\phi$--invariant characters $[\bar r] \in \XP(S)$. 

Let $\tau = \tau^{(0)}$, $ \tau^{(1)}$, \dots, $ \tau^{(k_0-1)}$, $ \tau^{(k_0)}=\phi(\tau)$ be an ideal triangulation sweep connecting the two ideal triangulations $\tau$ to its image $\phi(\tau)$, and let $a= a^{(0)}$, $ a^{(1)}$, \dots, $ a^{(k_0-1)}$, $ a^{(k_0)}= a$ be a periodic edge weight system for this ideal triangulation sweep, defining a $\phi$--invariant character $[\bar  r] \in \XP(S)$. 

The classification of irreducible representations of $\T_\tau^q(S)$ in  Proposition~\ref{prop:RepsCheFock} involves, in addition to an edge weight system  $ a\in \left(\C^* \right)^e$ for $\tau$, puncture weights $h_v$ constrained by the property that
$$
h_v^n = a_{i_1} a_{i_2} \dots a_{i_j}
$$
if the puncture $v$ is adjacent to the edges $\gamma_{i_1}$, $\gamma_{i_2}$, \dots, $\gamma_{i_j}$ of $\tau$.

If $v' = \phi(v)$ is adjacent to the edges $\gamma_{i_1'}$, $\gamma_{i_2'}$, \dots, $\gamma_{i_{j'}'}$ of $\tau= \tau^{(0)}$, it is also adjacent to the edges $\phi(\gamma_{i_1})$, $\phi(\gamma_{i_2})$, \dots, $\phi(\gamma_{i_j})$ of $\phi(\tau) = \tau^{(k_0)}$. By  inspection of the compatibility equations  for the edge weight systems $a^{(k)}$ considered in \S \ref{subsect:SweepInvariantChar} (see for instance \cite[Prop.~14]{Liu}),
$$
a_{i_1'} a_{i_2'} \dots a_{i_{j'}'}  = a_{i_1'}^{(0)} a_{i_2'}^{(0)} \dots a_{i_{j'}'}^{(0)} = a_{i_1}^{(k_0)} a_{i_2}^{(k_0)} \dots a_{i_j}^{(k_0)}  = a_{i_1} a_{i_2}\dots a_{i_j}.
$$
We can therefore choose the puncture weights $h_v$ so that $h_{\phi(v)}=h_v$ for every puncture $v$ of $S$. We will say that  such a choice of puncture weights is \emph{$\phi$--invariant}. 
 
Let $\bar \rho \colon \T_\tau^q(S) \to \End(\bar V)$ be an irreducible representation of the Chekhov-Fock algebra of $\tau$ that, as in Proposition~\ref{prop:RepsCheFock}, is classified by the edge weight system $a \in \left( \C^* \right)^e$ and by $\phi$--invariant puncture weights $h_v$. 
By Proposition~\ref{prop:CheFockRepsCompatibleIdealTriangChange}, the representation $\bar \rho \circ \Phi_{\tau\phi(\tau)}^q \colon \T_{\phi(\tau)}^q \to \End(\bar V)$ makes sense, and is classified by the same edge weight system $a=a^{(k_0)} \in \left( \C^* \right)^e$, considered as an edge weight system for $\phi(\tau) = \tau^{(k_0)}$, and by the same puncture weights $h_v$ as $\bar \rho$. 

The diffeomorphism $\phi$ itself induces another natural isomorphism $\Psi_{\phi(\tau) \tau}^q \colon \T_\tau^q(S) \to \T_{\phi(\tau)}^q(S)$, sending the generator of $\T_\tau^q(S)$ corresponding to the $i$--th edge $\gamma_i$ of $\tau$ to the generator of $\T_{\tau'}^q(S)$ corresponding to the $i$--th edge $\phi(\gamma_i)$ of $\phi(\tau)$. Indeed, the existence of $\phi$ and the precise definition of the coefficients $\sigma_{ij}$ guarantees that these generators satisfy the same $q$--commutativity relations $X_iX_j = q^{2\sigma_{ij}} X_jX_i$ in both $\T_\tau^q(S)$ and $\T_{\phi(\tau)}^q(S)$. 

We can then consider the representation $\bar \rho \circ \Phi_{\tau\phi(\tau)}^q \circ\Psi_{\phi(\tau) \tau}^q  \colon \T_{\tau}^q \to \End(\bar V)$. Its invariants are the same edge weight system $a \in \left( \C^* \right)^e$ as $\bar \rho \circ \Phi_{\tau\phi(\tau)}^q$, but this time considered as an edge weight system for $\tau$, and its puncture invariant $h_v'$ at the puncture $v$ is $h_v'= h_{\phi^{-1}(v)}$. Since we arranged that $h_v'= h_{\phi^{-1}(v)}$ for every puncture $v$, it follows from Proposition~\ref{prop:RepsCheFock} that the representations $\bar \rho$ and $\bar \rho \circ \Phi_{\tau\phi(\tau)}^q \circ\Psi_{\phi(\tau) \tau}^q  \colon \T_{\tau}^q \to \End(\bar V)$ are isomorphic, by an isomorphism  $\bar \Lambda_{\phi, \bar r}^q \colon \bar V \to \bar V$ such that 
$$
\left( \bar \rho \circ \Phi_{\tau\phi(\tau)}^q \circ \Psi_{\phi(\tau) \tau}^q \right) (X) = \bar \Lambda_{\phi, \bar r}^q  \circ  \rho  (X)  \circ \bar \Lambda_{\phi, \bar r}^q{}^{-1} \in \End( \bar V)
$$
for every $X \in \T_{\tau}^q(S)$. 

As before, we normalize $ \bar \Lambda_{\phi, \bar r}^q $ so that its determinant $\det  \bar \Lambda_{\phi, \bar r}^q $ has modulus $1$. 

As in Proposition~\ref{prop:KauffmanIntertwinerDefined}, the following is an immediate consequence of the fact that the representation $\bar\rho$ is irreducible and unique up to isomorphism. 

\begin{prop}
 \label{prop:QuantumTeichmullerIntertwinerDefined}
 Let  $\bar \Lambda_{\phi, \bar r}^q  \colon\bar  V \to \bar V$ be the above intertwiner, normalized so that $\left| \det \bar \Lambda_{\phi, \bar r}^q \right|=1$. Then, up to conjugation and  multiplication by a scalar of modulus $1$,  $\bar \Lambda_{\phi, \bar r}^q $ depends only on the diffeomorphism  $\phi$, the ideal triangulation sweep from $\tau$ to $\phi(\tau)$, the periodic edge weight system for this sweep, and the $\phi$--invariant puncture weights $h_v \in \C^*$.
 
 In particular,  the modulus $\big\lvert  \Tr \bar \Lambda_{\phi, \bar r}^q \,\big\rvert$  of its trace is uniquely determined by this data. \qed
\end{prop}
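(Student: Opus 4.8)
The plan is to reproduce, almost verbatim, the proof of Proposition~\ref{prop:KauffmanIntertwinerDefined}, with the uniqueness statement of Proposition~\ref{prop:RepsCheFock} playing the role that Theorem~\ref{thm:SkeinRepUnique} played there. Two implicit choices enter the construction of $\bar\Lambda_{\phi,\bar r}^q$: a representative $\bar\rho\colon \T_\tau^q(S)\to\End(\bar V)$ in the isomorphism class of irreducible representations of the Chekhov--Fock algebra classified, via Proposition~\ref{prop:RepsCheFock}, by the periodic edge weight system $a$ and the $\phi$--invariant puncture weights $h_v$; and the intertwiner $\bar\Lambda_{\phi,\bar r}^q$ itself, once $\bar\rho$ has been fixed.

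First I would hold $\bar\rho$ fixed. The discussion preceding the statement, combined with Proposition~\ref{prop:CheFockRepsCompatibleIdealTriangChange}, shows that $\bar\rho\circ\Phi_{\tau\phi(\tau)}^q\circ\Psi_{\phi(\tau)\tau}^q$ makes sense as an irreducible representation of $\T_\tau^q(S)$ and that it is classified by the same edge weight system $a$ and the same puncture weights $h_v$ as $\bar\rho$; hence the two are isomorphic and $\bar\Lambda_{\phi,\bar r}^q$ exists. Since $\bar\rho$ is irreducible, Schur's lemma forces any two such intertwiners to differ by a scalar $s\in\C^*$, and the normalization $\lvert\det\bar\Lambda_{\phi,\bar r}^q\rvert=1$ then constrains $\lvert s\rvert=1$. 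So, with $\bar\rho$ fixed, $\bar\Lambda_{\phi,\bar r}^q$ is well-defined up to multiplication by a scalar of modulus~$1$.

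Next I would let $\bar\rho$ vary within its isomorphism class. If $\bar\rho'\colon\T_\tau^q(S)\to\End(\bar V')$ is another representative and $\Lambda\colon\bar V\to\bar V'$ realizes the isomorphism $\bar\rho'=\Lambda\circ\bar\rho\circ\Lambda^{-1}$, then, because $\Phi_{\tau\phi(\tau)}^q$ and $\Psi_{\phi(\tau)\tau}^q$ are fixed algebra maps, a direct check shows that $\Lambda\circ\bar\Lambda_{\phi,\bar r}^q\circ\Lambda^{-1}$ is an intertwiner for $\bar\rho'$, still normalized because conjugation preserves $\lvert\det\rvert$. Together with the previous step, this yields uniqueness of $\bar\Lambda_{\phi,\bar r}^q$ up to conjugation and multiplication by a modulus-one scalar. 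Since both operations leave $\lvert\Tr\rvert$ unchanged, the invariance of $\lvert\Tr\bar\Lambda_{\phi,\bar r}^q\rvert$ follows immediately.

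The argument is entirely formal once one knows that $\bar\rho$ is irreducible and unique up to isomorphism. The only place that is not pure Schur's-lemma bookkeeping --- and it has already been settled before the statement --- is the verification that $\bar\rho\circ\Phi_{\tau\phi(\tau)}^q\circ\Psi_{\phi(\tau)\tau}^q$ makes sense (no denominators obstruct the definition) and carries the same classifying invariants as $\bar\rho$; this is precisely where the periodicity of the edge weight system and the $\phi$--invariance of the $h_v$ are used. Beyond that I expect no obstacle.
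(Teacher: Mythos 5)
Your argument is correct and is exactly the paper's intended reasoning: the paper remarks just before the statement that the result is "an immediate consequence of the fact that the representation $\bar\rho$ is irreducible and unique up to isomorphism," gives no separate proof (it is marked $\square$), and expects the reader to transfer verbatim the two-step Schur's-lemma argument from the proof of Proposition~\ref{prop:KauffmanIntertwinerDefined}, which is precisely what you do.
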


The combination of  Proposition~\ref{prop:KauffmanIntertwinerDefined} with the following statement  provides a  stronger uniqueness statement, involving much less data. 

For this statement, we need to eliminate some very special characters $[r]\in \XX(S)$, those which admit a non-trivial \emph{sign-reversal symmetry} as defined below. The cohomology group $H^1(S; \Z/2)$  acts on the character variety $\XX(S)$ by the property that, if  $[r] \in \XX(S)$ is represented by a homomorphism $r \colon \pi_1(S) \to \SL$, its image $\epsilon[r] \in \XX(S)$ under the action of $\epsilon \in H^1(S; \Z/2)$ is represented by the homomorphism $\epsilon r \colon \pi_1(S) \to \SL$ such that
 $$
 \epsilon r(\alpha)  = (-1)^{\epsilon([\alpha])} r(\alpha) \in \SL
 $$
 for every $\alpha \in \pi_1(S)$, where $[\alpha]$ denote the class of $\alpha$ in the homology group $H_1(S;\Z/2)$. A character  $[r] \in \XX(S)$ has a \emph{nontrivial sign-reversal symmetry} if it is fixed under the action of some nontrivial $\epsilon \in H^1(S; \Z/2)$. This is equivalent to the property that $\Tr r(\alpha)=0$ for every $\alpha \in \pi_1(S)$ with $\epsilon([\alpha]) \neq 0$. These symmetries are rare, and the characters $[r] \in \XX(S)$ admitting a non-trivial sign-reversal symmetry form a Zariski closed subset of high codimension in $\XX(S)$; see the end of \cite[\S 5.1]{BonWon4}. 

\begin{thm}
 \label{thm:CheFockAndKauffmanIntertwiners}
 
 Let $q$ be a primitive $n$--root of unity with $n$ odd, and choose a square root $q^{\frac12}$ such that $\big( q^{\frac12} \big)^n = -1$. Let $\Lambda_{\phi, r}^q \colon V \to V$ be the intertwiner associated  by Proposition~{\upshape\ref{prop:KauffmanIntertwinerDefined}} to a $\phi$--invariant irreducible character $[r] \in \XX(S)$ and to $\phi$--invariant puncture weights $p_v \in \C^*$. 
 
 Similarly, let $\bar \Lambda_{\phi, \bar r}^q \colon \bar V \to \bar V$ be the  intertwiner associated by Proposition~{\upshape\ref{prop:QuantumTeichmullerIntertwinerDefined}} to a periodic edge weight system $a= a^{(0)}$, $ a^{(1)}$, \dots, $ a^{(k_0-1)}$, $ a^{(k_0)}= a\in \left(\C^* \right)^e$ for an ideal triangulation sweep $\tau = \tau^{(0)}$, $ \tau^{(1)}$, \dots, $ \tau^{(k_0-1)}$, $ \tau^{(k_0)}=\phi(\tau)$ and to $\phi$--invariant puncture weights $h_v$. 
 
 Suppose that the character $[r]$ admits no nontrivial sign-reversal symmetry, and that the two data sets are connected by the following properties:
\begin{enumerate}
 \item   the $\phi$--invariant character $[\bar r] \in \XP(S)$ associated to the periodic edge weight system is equal to the image of $[r] \in \XX(S)$ under the canonical projection $\XX(S) \to \XP(S)$;
 \item  $p_v^2 = h_v + h_v^{-1} +2$ for every puncture $v$.
\end{enumerate}
 Then, up to scalar multiplication by an $n$--root of unity, the intertwiners $\Lambda_{\phi, r}^q$ and $\bar \Lambda_{\phi, \bar r}^q$ are conjugate to each other by an isomorphism $V \to \bar V$. 
\end{thm}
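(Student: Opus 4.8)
The plan is to realize both intertwiners as the Schur scalar comparing the two copies $\rho$ and $\rho\circ\phi_*$ of one irreducible representation of $\SSS(S)$, after transporting the quantum Teichm\"uller picture into the skein picture via the correspondence of \cite{BonWon4} between representations of $\SSS(S)$ and of the Chekhov--Fock algebra. Recall that an ideal triangulation $\tau$ gives an injective algebra homomorphism (the quantum trace) from $\SSS(S)$ into a localization of the balanced Chekhov--Fock algebra $\ZZ_\tau(S)$, which itself covers $\T_\tau^q(S)$ by adjoining square roots of the generators. Starting from the representation $\bar\rho\colon\T_\tau^q(S)\to\End(\bar V)$ of Proposition~\ref{prop:QuantumTeichmullerIntertwinerDefined}, classified by the periodic edge weight system $a$ and the $\phi$--invariant puncture weights $h_v$, the first step is to lift it to an irreducible representation $\tilde\rho\colon\ZZ_\tau(S)\to\End(\bar V)$ (which requires a choice of square roots of the invariants $a_i$), and then to pull $\tilde\rho$ back through the quantum trace to a representation $\rho'\colon\SSS(S)\to\End(\bar V)$ on the same space. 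As in \cite{BonWon4}, one checks that this pullback ``makes sense'', i.e.\ that the denominators occurring in the quantum trace are sent to invertible operators, using that $a$ comes from a genuine pleated surface and that $\bar\rho$ is classified by $a$. The classical-shadow computation of \cite{BonWon4, BonWon5} then identifies the $\SL$--character of $\rho'$ as $[r]$ and its puncture weights as $p_v$ with $p_v^2=h_v+h_v^{-1}+2$ --- which is exactly what hypotheses (1) and (2) impose. The hypothesis that $[r]$ has no nontrivial sign-reversal symmetry enters at this point: changing the chosen square roots of the $a_i$ alters the $\SL$--character of $\rho'$ by a sign-reversal, so that hypothesis, together with the irreducibility of $[r]$, is what makes the lift $\tilde\rho$ --- hence $\rho'$ --- unambiguous up to data that does not change its isomorphism class. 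Since $[r]$ is irreducible, Theorem~\ref{thm:SkeinRepUnique} then produces an isomorphism $J\colon V\to\bar V$ with $\rho'=J\,\rho(\,\cdot\,)\,J^{-1}$.

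The second step is the naturality of the quantum trace with respect to the two operations used to build the intertwiners. On one hand, along any ideal triangulation sweep the quantum trace intertwines the Chekhov--Fock coordinate-change isomorphisms $\Phi_{\tau\tau'}^q$ with their balanced refinements on $\ZZ$; on the other hand it is equivariant under the diffeomorphism, in the sense that applying $\phi$ to a skein and then taking the quantum trace relative to $\phi(\tau)$ coincides with taking the quantum trace relative to $\tau$ and then applying the reindexing isomorphism $\Psi_{\phi(\tau)\tau}^q$ in its balanced form. Plugging these two identities into the construction of $\bar\Lambda_{\phi,\bar r}^q$ that precedes Proposition~\ref{prop:QuantumTeichmullerIntertwinerDefined}, and invoking Proposition~\ref{prop:CheFockRepsCompatibleIdealTriangChange} to make sense of $\bar\rho\circ\Phi_{\tau\phi(\tau)}^q$, shows that the operator $\widetilde\Lambda$ on $\bar V$ induced by $\bar\Lambda_{\phi,\bar r}^q$ conjugates $\rho'$ to $\rho'\circ\phi_*$.

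For the conclusion, observe that $J\,\Lambda_{\phi,r}^q\,J^{-1}$ and $\widetilde\Lambda$ both conjugate the irreducible representation $\rho'$ to $\rho'\circ\phi_*$, so Schur's lemma forces them to differ by a nonzero scalar. The normalizations $\bigl|\det\Lambda_{\phi,r}^q\bigr|=\bigl|\det\bar\Lambda_{\phi,\bar r}^q\bigr|=1$, together with the fact that $\widetilde\Lambda$ is determined by $\bar\Lambda_{\phi,\bar r}^q$ only up to the finite ambiguity built into the $\ZZ_\tau(S)$--lift $\tilde\rho$, pin this scalar down to an $n$--th root of unity; identifying precisely which root-of-unity ambiguity survives is a matter of bookkeeping with the central data of $\tilde\rho$. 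This yields the asserted conjugacy of $\Lambda_{\phi,r}^q$ and $\bar\Lambda_{\phi,\bar r}^q$ up to multiplication by an $n$--th root of unity.

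I expect the main obstacle to be the first step and the square-root bookkeeping it demands: setting up the lifts $\bar\rho\rightsquigarrow\tilde\rho\rightsquigarrow\rho'$ precisely, verifying the ``makes sense'' condition for the quantum trace, and --- most delicately --- carrying out the classical-shadow computation carefully enough to see that the correct $\SL$--lift $[r]$, and not one of its sign-reversal translates, is produced; this is exactly where the no-sign-reversal hypothesis and the precise relation $p_v^2=h_v+h_v^{-1}+2$ are needed. The naturality statements of the second step, while geometrically transparent, also need care in degenerate configurations such as the one-puncture torus, where sides of a diagonal-exchange square are identified and the formulas for $\Phi^q$, $\Psi^q$ and the quantum trace all take a special form.
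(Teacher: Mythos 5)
Your proposal follows essentially the same strategy as the paper: both proofs route through the balanced Chekhov--Fock algebra $\ZZ_\tau(S)$, which contains $\T^q_\tau(S)$ (via $X_i\mapsto Z_i^2$) and $\SSS(S)$ (via the quantum trace), construct a representation $\breve\rho$ of $\ZZ_\tau(S)$ whose two restrictions match $\bar\rho$ and $\rho$, and then use Schur's lemma on the irreducible restrictions to identify $\Lambda_{\phi,r}^q$ and $\bar\Lambda_{\phi,\bar r}^q$ with scalar multiples of the same $\ZZ_\tau(S)$--level intertwiner.

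The one genuine difference in presentation is the direction of construction of the $\ZZ_\tau(S)$--representation. You start from $\bar\rho$ and \emph{lift} it to $\tilde\rho\colon\ZZ_\tau(S)\to\End(\bar V)$, choosing square roots as you go, and then pull back through the quantum trace. The paper instead constructs $\breve\rho$ \emph{directly} from the geometric data (the edge weight system, the character $[r]$, and a compatible choice of $n$--roots of the peripheral eigenvalues $\lambda_v$) by invoking its Proposition~\ref{prop:RepsBalancedCheFock}, and then verifies in three explicit claims that the restrictions to $\T^q_\tau(S)$ and $\SSS(S)$ recover $\bar\rho$ and $\rho$ and that an intertwiner exists at the $\ZZ_\tau(S)$ level. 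The paper's route sidesteps the separate existence question for the lift that your argument must address, and it makes the precise role of the constraint $p_v^2 = h_v + h_v^{-1} + 2$ transparent (it is the statement that $p_v = -k_v - k_v^{-1}$ and $h_v = k_v^2$ for a common $n$--root $k_v$ of $\lambda_v$). Your version gives a correct roadmap but leaves the key verifications (irreducibility of the $\T^q_\tau$--restriction, existence of the $\ZZ_\tau(S)$--level intertwiner, the precise classical-shadow bookkeeping) as exercises; these are exactly the paper's Claims~\ref{claim:CheFockAndKauffmanIntertwiners1}--\ref{claim:CheFockAndKauffmanIntertwiners3}, and you should be aware that the naturality of the quantum trace alone does not directly let $\bar\Lambda_{\phi,\bar r}^q$ conjugate the $\SSS(S)$--parts — you need the intertwiner to exist at the larger $\ZZ_\tau(S)$ level and then cut it down by Schur.
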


The next subsection is devoted to the proof of Theorem~\ref{thm:CheFockAndKauffmanIntertwiners}.

\subsection{Comparing the Kauffman and Chekhov-Fock intertwiners}
\label{subsect:KauffmanCheFockIntertwiner}
The proof will use a third algebra, the balanced Chekhov-Fock (square root) algebra $\ZZ_\tau(S)$ of an ideal triangulation, which contains both  the Chekhov-Fock algebra $\T_\tau^q(S)$ and the Kauffman bracket skein algebra $\SSS(S)$. 

The introduction of this balanced Chekhov-Fock algebra in \cite{Hiatt} is grounded in the following two geometric facts. The first one is that, if the  character $[\bar r] \in \XP(S)$  is associated to an edge weight system $a\in \left(\C^*\right)^e$ for  $\tau$ as in \S \ref{subsect:EdgeWeightsGiveCharacters}, the trace $\Tr \bar r(\alpha)$ of an element $\alpha \in \pi_1(S)$ can be expressed as an explicit Laurent polynomial in the \emph{square roots} of the coordinates $a_i$ of $a$. The second fact is that, algebraically, these square roots are not well-behaved under changes of the ideal triangulation $\tau$; for instance,  in the case considered in Equation (\ref{eqn:ShearbendEmbeddedDiagEx}), solving for $\sqrt{a_i'}$ as a function of the $\sqrt{a_j}$ yields an expression that may involve terms $\sqrt{1+ \sqrt{a_{i_0}}^2}$ and consequently may not be rational. The first fact leads us to consider formal square roots $Z_i$ for the generators $X_i$ of the Chekhov-Fock algebra $\T_\tau^q(S)$. The technical difficulties arising from the second fact are addressed by introducing parity constraints. 
 
 More precisely, we first choose a $4$--root $q^{\frac14}$ of $q$ and, for every ideal triangulation $\tau$ of $S$, we consider the Chekhov-Fock algebra $\T^{q^{\frac14}}_\tau(S) = \C \left[ Z_1^{\pm1}, Z_2^{\pm1}, \dots, Z_e^{\pm1} \right] ^{q^{\frac14}}_\tau$ defined by generators $Z_i$ associated to the edges of $\tau$ and by the skew-commutativity relations
$$
Z_i Z_j = \big( q^{\frac14} \big) ^{2\sigma_{ij}}  Z_j Z_i 
$$
where  $\sigma_{ij} \in \Z$ is the same integer coefficients as in the definition of $\T^q_\tau(S)$, and is determined by the combinatorics of the ideal triangulation $\tau$. 

In particular, there is a natural embedding $\T^q_\tau(S) \to \T^{q^{\frac14}}_\tau(S) $  sending  each generator $X_i \in \T^q_\tau(S)$ to the square $Z_i^2$ of the generator $Z_i \in  \T^{q^{\frac14}}_\tau(S)$ associated to the same edge of $\tau$. 

The \emph{balanced Chekhov-Fock algebra} of the ideal triangulation $\tau$ is the subalgebra $\ZZ_\tau(S)$ of  $\T^{q^{\frac14}}_\tau(S) $ that, as a vector space, is spanned by the monomials $Z_1^{n_1} Z_2^{n_2} \dots Z_e^{n_e}$ where the exponents $n_1$, $n_2$, \dots, $n_e\in \Z$ satisfy the following condition: Whenever a triangle of $\tau$ is bounded by the $i$--th, $j$--th and $k$--th edges of $\tau$ (with $i$, $j$, $k$ not necessarily distinct), then the sum $n_i+n_j+n_k$ is even. In particular, this balanced Chekhov-Fock algebra $\ZZ_\tau(S)$ contains the original Chekhov-Fock algebra $\T^q_\tau(S)$, considered as a subset of $\T^{q^{\frac14}}_\tau(S)$. 

The balanced Chekhov-Fock algebra $\ZZ_\tau(S)$ also contains the Kauffman bracket skein algebra $\SSS(S)$. Indeed, the quantum trace homomorphism of \cite{BonWon1} provides an injective homomorphism $\mathrm{Tr}_\tau^q \colon \SSS(S) \to \ZZ_\tau(S)$.

Hiatt \cite{Hiatt} (see also comments in \cite[\S 7]{BonWon1} and \cite[\S 5.2]{BonWon5}) proved that the Chekhov-Fock coordinate change isomorphisms $\Phi_{\tau\tau'}^q(S) \colon \widehat\T^q_{\tau'}(S) \to \widehat\T^q_\tau(S)$ admit extensions $\Phi_{\tau\tau'}^q(S) \colon \widehat{\mathcal Z}^{q^{\frac14}}_{\tau'}(S) \to \widehat{\mathcal Z}^{q^{\frac14}}_\tau(S)$, such that 
$$
 \Phi_{\tau \tau''}^q = \Phi_{\tau \tau'}^q \circ \Phi_{\tau' \tau''}^q
 $$
 for every three ideal triangulations $\tau$, $\tau'$, $\tau''$. (In spite of what the notation might suggest, this extension of $\Phi_{\tau \tau'}^q $ to $ \widehat{\mathcal Z}^{q^{\frac14}}_{\tau'}(S)$ depends on the $4$--root $q^{\frac14}$, not just on $q$.)

 The  balanced Chekhov-Fock algebra $\ZZ_\tau(S)$ is not very different from the Chekhov-Fock algebra  $\T^{q^{\frac14}}_\tau(S)$, and its irreducible representations are classified by a statement analogous to Proposition~\ref{prop:RepsCheFock}. However, there is a price to pay  if, in order to prove Theorem~\ref{thm:CheFockAndKauffmanIntertwiners}, we choose the 4--root $q^{\frac14}$ so that $\big(q^{\frac14}\big)^{2n} = \big(q^{\frac12}\big)^n =-1$. The complex edge weights of Proposition~\ref{prop:RepsCheFock} need to be replaced by  more complicated data, the twisted cocycles twisted by the Thurston intersection form that occur in \cite[Props. 14--15]{BonWon4}. Instead of stating this general classification result for irreducible representations of  $\ZZ_\tau(S)$, we will give a technical result that is adapted to our goals.

 As in Proposition~\ref{prop:RepsCheFock}, we again have for each puncture $v$ of $S$ a preferred element 
$$
K_v = q^{-\frac14\sum_{1 \leq k < k' \leq j}  \sigma_{kk'} }  Z_{i_1} Z_{i_2} \dots Z_{i_j} \in \ZZ_\tau(S)
$$
where  $\gamma_{i_1}$, $\gamma_{i_2}$, \dots, $\gamma_{i_j}$ are the edges  of $\tau$ that are adjacent to $v$. This puncture element is related to that of  Proposition~\ref{prop:RepsCheFock} by the property that 
$$
K_v^2 = H_v \in \T_\tau^q(S) \subset \ZZ_\tau(S). 
$$
 
 The following statement follows from the combination of Propositions~22 and 23 in \cite{BonWon4}. 
 
\begin{prop} 
\label{prop:RepsBalancedCheFock}
Let $q$ be a primitive $n$--root of unity with $n$ odd, and choose a $4$--root $q^{\frac14}$ such that $\big( q^{\frac14} \big)^{2n}=-1$. Suppose that we are given:
\begin{itemize}
 \item a character $[r] \in \XX(S)$ whose projection $[\bar r] \in \XP(S)$ is associated to an edge weight system $a\in \left( \C^* \right)^e$ for the ideal triangulation $\tau$;
 \item for each puncture $v$, an $n$--root $k_v$ of the preferred eigenvalue $\lambda_v$ of $r(\alpha_v) \in \SL$ described in Lemma~{\upshape\ref{lem:PunctureEigenvalueShearbendParam}} and Remark~{\upshape\ref{rem:PunctureEigenvalueShearbendParam}}.
\end{itemize}
Suppose in addition that the character $[r]$ admits no nontrivial sign-reversal symmetry. Then, up to isomorphism, there is a unique irreducible representation $\breve \rho \colon \ZZ_\tau(S) \to \End(\breve V)$ such that:
\begin{enumerate}
\item if $Z_i$ is the generator of $\T_\tau^{q^{\frac14}}(S)$  associated to the $i$--th edge of $\tau$ and if $a_i \in \C^*$ is the $i$--th coordinate of the edge weight system $a\in \left( \C^* \right)^e$, then $\breve \rho(Z_i^2) = a_i \, \Id_{\breve V}$ for every $i=1$, $2$, \dots, $e$;
\item for every puncture $v$ of $S$ and for the  element $K_v \in \ZZ_\tau(S)$ associated to $v$ as above, $\breve\rho (K_v) = k_v \, \Id_{\breve V}$;

\item for the quantum trace homomorphism $\mathrm{Tr}_\tau^q \colon \SSS(S) \to \ZZ_\tau(S)$, the representation $\breve \rho \circ \mathrm{Tr}_\tau^q \colon \SSS(S) \to \End(\breve V)$ has classical shadow equal to the given character $[r] \in \XX(S)$, in the sense of Theorem~{\upshape\ref{thm:ClassicalShadow}}.  
 
\end{enumerate}

In addition, the representation has dimension $\dim \breve V = n^{3g+p-3}$ if the surface $S$ has genus $g$ and $p$ punctures.   \qed

\end{prop}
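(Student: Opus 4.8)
The plan is to deduce Proposition~\ref{prop:RepsBalancedCheFock} from the classification of irreducible representations of the balanced Chekhov--Fock algebra $\ZZ_\tau(S)$ proved in \cite[Props.~22--23]{BonWon4}; the real content of the argument lies in translating the (cohomological) invariants used there into the geometric data $\bigl(a,(k_v)_v,[r]\bigr)$ of the statement.

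First I would recall that classification. For the chosen $4$--root $q^{\frac14}$ with $\big(q^{\frac14}\big)^{2n}=-1$, \cite[Prop.~22]{BonWon4} is the balanced analogue of Proposition~\ref{prop:RepsCheFock}: every irreducible representation $\breve\rho\colon\ZZ_\tau(S)\to\End(\breve V)$ has dimension $n^{3g+p-3}$ and is determined up to isomorphism by the scalars $\breve\rho(Z_i^2)\in\C^*$ and $\breve\rho(K_v)\in\C^*$ of conditions (1)--(2), together with one further invariant --- a cocycle twisted by the Thurston intersection form, as in \cite[Props.~14--15]{BonWon4} --- recording the action of $\breve\rho$ on the part of the center of $\ZZ_\tau(S)$ not seen through the $Z_i^2$ and the $K_v$; conversely, every admissible such datum is realized. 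Next I would recall, from \cite[Prop.~23]{BonWon4}, the geometric reading of this datum: the scalars $a_i=\breve\rho(Z_i^2)$ are the coordinates of an edge weight system for $\tau$, which by \S\,\ref{subsect:EdgeWeightsGiveCharacters} determines a $\PSL$--character $[\bar r]$; the representation $\breve\rho\circ\mathrm{Tr}_\tau^q\colon\SSS(S)\to\End(\breve V)$ --- whose classical shadow is well defined because the irreducibility of $\breve\rho$ forces the relevant threading elements to act by scalars --- has classical shadow $[r']\in\XX(S)$ lifting $[\bar r]$, as in condition (3); and $\breve\rho(K_v)$ is an $n$--root of the preferred eigenvalue of $r'(\alpha_v)$ attached to the $\SL$--lift $[r']$ as in Lemma~\ref{lem:PunctureEigenvalueShearbendParam} and Remark~\ref{rem:PunctureEigenvalueShearbendParam}, the bookkeeping $K_v^2=H_v$ being compatible by construction.

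For existence, I would start from an irreducible representation realizing the prescribed scalars $a_i$, which exists by the converse half of \cite[Prop.~22]{BonWon4}, and then correct its classical shadow. The group $H^1(S;\Z/2)$ acts on isomorphism classes of irreducible representations of $\ZZ_\tau(S)$ through the sign automorphisms $Z_i\mapsto\pm Z_i$, compatibly via $\mathrm{Tr}_\tau^q$ with its action on $\XX(S)$ recalled before Theorem~\ref{thm:CheFockAndKauffmanIntertwiners}; such a twist leaves every scalar $\breve\rho(Z_i^2)=a_i$ unchanged and multiplies each $\breve\rho(K_v)$ by a sign. Since $[r]$ and the classical shadow $[r']$ of the starting representation are two $\SL$--lifts of the same $[\bar r]$, they differ by a unique $\epsilon\in H^1(S;\Z/2)$, and twisting by $\epsilon$ produces an irreducible representation that still realizes the $a_i$ but now has classical shadow exactly $[r]$; a final bookkeeping step then arranges $\breve\rho(K_v)=k_v$, which is admissible precisely because $k_v$ is an $n$--root of the eigenvalue of $r(\alpha_v)$. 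For uniqueness, if $\breve\rho$ and $\breve\rho'$ both satisfy (1)--(3), the classification above shows they differ by a twist $\epsilon\in H^1(S;\Z/2)$ fixing the $a_i$, the $k_v$ and the classical shadow $[r]$; hence $\epsilon[r]=[r]$, so $\epsilon$ is a sign-reversal symmetry of $[r]$ and therefore trivial by hypothesis, giving $\breve\rho\cong\breve\rho'$. The dimension $\dim\breve V=n^{3g+p-3}$ is read off directly from \cite[Prop.~22]{BonWon4}.

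I expect the main obstacle to be not the $H^1(S;\Z/2)$--twisting argument, which is routine, but the two assertions borrowed from \cite[Prop.~23]{BonWon4} in the second paragraph: that the puncture scalar $\breve\rho(K_v)$ is normalized so as to be an $n$--root of the eigenvalue of $r'(\alpha_v)$ coming from the chosen $\SL$--lift $[r']$ --- and not merely a root of the $\PSL$--invariant $a_{i_1}\dots a_{i_j}$ --- which requires tracking the $q$--power prefactors relating $K_v$, $H_v$ and the classical eigenvalue $\lambda_v$ of Lemma~\ref{lem:PunctureEigenvalueShearbendParam}; and that fixing the edge weight system, the puncture scalars and the classical shadow pins down the twisted cocycle up to a sign-reversal ambiguity only, which is exactly why the absence of a nontrivial sign-reversal symmetry of $[r]$ is the precise hypothesis needed for uniqueness.
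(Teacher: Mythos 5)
Your proposal takes the same route as the paper, which gives no proof at all beyond the one-line assertion that the statement ``follows from the combination of Propositions~22 and 23 in \cite{BonWon4}.'' What you have written is a plausible reconstruction of what that combination involves: the balanced analogue of Proposition~\ref{prop:RepsCheFock} from \cite[Prop.~22]{BonWon4} (classification by $\breve\rho(Z_i^2)$, $\breve\rho(K_v)$ and a twisted-cocycle invariant), the geometric dictionary from \cite[Prop.~23]{BonWon4} (edge weights $\leftrightarrow$ $[\bar r]$, $\breve\rho(K_v)$ $\leftrightarrow$ $n$--root of $\lambda_v$, quantum trace composition $\leftrightarrow$ classical shadow), and the $H^1(S;\Z/2)$--twisting step that both realizes the prescribed $\SL$--lift and --- under the no-nontrivial-sign-reversal hypothesis --- forces uniqueness. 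You also correctly flag where the argument genuinely leans on \cite{BonWon4} rather than being self-contained: the normalization of $\breve\rho(K_v)$ relative to the $\SL$--lift, and the fact that fixing $(a_i)$, $(k_v)$ and $[r]$ pins down the cocycle up to a sign-reversal ambiguity. Since the paper itself gives no argument to compare against, the only caveat is that the details of the $(\Z/2)^e$-twist action and its compatibility with both the balancing condition and the quantum trace are taken on faith from \cite{BonWon4}; but this is exactly what the paper's citation also does, so your proposal is a faithful, correctly structured expansion of the intended proof.
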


We now have all the ingredients needed to prove  Theorem~\ref{thm:CheFockAndKauffmanIntertwiners}. 

\begin{proof}
 [Proof of Theorem~\ref{thm:CheFockAndKauffmanIntertwiners}]
 
 Let us first summarize the data of Theorem~\ref{thm:CheFockAndKauffmanIntertwiners}. We have a $\phi$--invariant irreducible character $[r] \in \XX(S)$ that  admits no nontrivial sign-reversal symmetry, and whose projection $[\bar r] \in \XP(S)$ is associated to an edge weight system for an ideal triangulation sweep from $\tau$ to $\phi(\tau)$. 
 The data of $[ r] \in \XX(S)$ and of $\phi$--invariant compatible puncture weights $p_v \in \C$ define an irreducible representation $\rho \colon \SSS(S) \to \End(V)$ as in Theorem~\ref{thm:ClassicalShadow}, and Proposition~\ref{prop:KauffmanIntertwinerDefined} then provides an isomorphism $\Lambda_{\phi, r}^q \colon V \to V$ between the representations $\rho \circ \phi_*$ and $\rho$.  The edge weight system and a data of $\phi$--invariant compatible puncture weights $h_v \in \C^*$ define an irreducible representation $\bar \rho \colon \T_\tau^q(S) \to \End(\bar V)$, and Proposition~\ref{prop:QuantumTeichmullerIntertwinerDefined} now provides an isomorphism $\bar \Lambda_{\phi, \bar r}^q \colon \bar V \to \bar V$ between the representations $\bar \rho \circ \Phi_{\tau \phi(\tau)}^q  \circ \Psi_{\phi(\tau) \tau}^q$ and $\bar \rho$.  
 
We want to show that, under the hypothesis that $p_v^2 = h_v + h_v^{-1} + 2$ for every puncture $v$, the two intertwiners $ \Lambda_{\phi,  r}^q $ and $ \bar \Lambda_{\phi, \bar r}^q $ are conjugate, up to scalar multiplication by a root of unity.

 It is time to reveal what underlies this condition that $p_v^2 = h_v + h_v^{-1} + 2$. In Theorem~\ref{thm:ClassicalShadow}, the puncture weight $p_v$ is constrained by the property that $T_n(p_n) = - \Tr r(\alpha_v)$ for the Chebyshev polynomial $T_n$ and  for an element $\alpha_v \in \pi_1(S)$ represented by a small loop going once around the puncture.  In Proposition~\ref{prop:RepsCheFock}, the constraint on $h_v$ is that $h_v^n$ must be equal to a quantity which, by Lemma~\ref{lem:PunctureEigenvalueShearbendParam} and Remark~\ref{rem:PunctureEigenvalueShearbendParam},  turns out to be equal to $\lambda_v^2$ for an eigenvalue $\lambda_v$ of $r(\alpha_v) \in \SL$ singled out by the presentation of $[\bar r]$ in terms of an edge weight system for the ideal triangulation $\tau$. Therefore, for the eigenvalue $\lambda_v$ determined by the geometric setup, $p_v$ and $h_v$ are constrained by the equations $T_n(p_v) = - \lambda_v - \lambda_v^{-1}$ and $h^n = \lambda_v^2$. By an elementary property of the Chebyshev polynomial already mentioned in \S \ref{subsect:VolConjSurfaceDiffeos} (see for instance \cite[Lem. 17]{BonWon4}), the solutions of the first equation are the numbers of the form $p_v = -k_v -k_v^{-1}$ as $k_v$ ranges over all $n$--roots of $\lambda_v$. A simple algebraic manipulation (slightly more elaborate when $\lambda_v=\pm1$ or $\pm\I$) then shows that the equation $p_v^2 = h_v + h_v^{-1} + 2$ is equivalent to the existence of an $n$--root $k_v$ of $\lambda_v$ such that $p_v = -k_v -k_v^{-1}$ and $h_v = k_v^2$.

Therefore, for every puncture $v$, we have an $n$--root $k_v$ of the eigenvalue $\lambda_v$, such that $p_v = -k_v -k_v^{-1}$ and $h_v = k_v^2$. We also have a character $[r]\in\XX(S)$ whose projection $[\bar r] \in \XX(S)$ is associated to an edge weight system $a\in \left( \C^* \right)^e$ for the ideal triangulation $\tau$. Proposition~\ref{prop:RepsBalancedCheFock} associates to this data a representation $\breve \rho \colon \ZZ_\tau(S) \to \End(\breve V)$. 

We first consider the restrictions of $\breve \rho$ to the Kauffman bracket skeing algebra $\SSS(S)$ and to the Chekhov-Fock algebra $\T_\tau^q(S)$. 

\begin{claim}
\label{claim:CheFockAndKauffmanIntertwiners1}
 The restriction $\breve \rho_{| \T_\tau^q(S)} \colon \T_\tau^q(S) \to \End(\breve V)$ is isomorphic to the representation $\bar \rho  \colon \T_\tau^q(S) \to \End(\bar V)$. 
\end{claim}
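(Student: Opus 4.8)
The plan is to reduce Claim~\ref{claim:CheFockAndKauffmanIntertwiners1} to the uniqueness part of Proposition~\ref{prop:RepsCheFock}, by showing that the restriction $\breve\rho_{|\T_\tau^q(S)}$ is \emph{irreducible} and has exactly the same classifying invariants (edge weights $a_i$ and puncture weights $h_v$) as $\bar\rho$. The invariants are the easy part. Recall that the embedding $\T_\tau^q(S) \hookrightarrow \ZZ_\tau(S)$ sends $X_i$ to $Z_i^2$, so by part~(1) of Proposition~\ref{prop:RepsBalancedCheFock} we get $\breve\rho(X_i) = \breve\rho(Z_i^2) = a_i\,\Id_{\breve V}$, which is precisely the edge-weight invariant of $\bar\rho$. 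Likewise, the puncture element $H_v \in \T_\tau^q(S)$ satisfies $H_v = K_v^2$ inside $\ZZ_\tau(S)$, so part~(2) gives $\breve\rho(H_v) = \breve\rho(K_v)^2 = k_v^2\,\Id_{\breve V} = h_v\,\Id_{\breve V}$, matching the puncture invariant of $\bar\rho$ since $h_v = k_v^2$ by the choice of $k_v$ made just above. So the two representations have the same invariants.

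The remaining point — and the one I expect to be the main obstacle — is to check that $\breve\rho_{|\T_\tau^q(S)}$ is irreducible, i.e.\ that $\breve\rho(\T_\tau^q(S))$ is already all of $\End(\breve V)$, even though we only know that $\breve\rho(\ZZ_\tau(S)) = \End(\breve V)$. Here I would use dimension counting together with the structure of quantum tori. Since $q$ is a primitive $n$--th root of unity with $n$ odd, the image $\breve\rho(\T_\tau^q(S))$ is the subalgebra of $\End(\breve V)$ generated by the operators $\breve\rho(X_i)^{\pm1}$, and a standard argument for quantum tori at a root of unity shows that this image is a matrix algebra of dimension $n^{2d}$, where $2d$ is the rank of the ``reduced'' skew form $\sigma_{ij} \bmod n$; by \cite[Theorem 21]{BonLiu} this is exactly $n^{2(3g+p-3)}$, so $\breve\rho(\T_\tau^q(S)) \cong \End(W)$ for a space $W$ of dimension $n^{3g+p-3}$. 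But $\dim\breve V = n^{3g+p-3}$ as well, by the last line of Proposition~\ref{prop:RepsBalancedCheFock}, and an action of a full matrix algebra $\End(W)$ on a space of the same dimension as $W$ must be irreducible (the space is a single copy of the defining module). Hence $\breve\rho_{|\T_\tau^q(S)}$ is irreducible.

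With irreducibility in hand, Proposition~\ref{prop:RepsCheFock} applies: two irreducible representations of $\T_\tau^q(S)$ with the same edge weights $a_i$ and the same puncture weights $h_v$ are isomorphic. Since $\breve\rho_{|\T_\tau^q(S)}$ and $\bar\rho$ share these invariants, they are isomorphic as representations of $\T_\tau^q(S)$, which is precisely the assertion of Claim~\ref{claim:CheFockAndKauffmanIntertwiners1}. One subtlety to watch is the normalization of the power of $q^{\frac14}$ in the definition of $K_v$ versus that of $H_v$: the relation $K_v^2 = H_v$ is built into Proposition~\ref{prop:RepsBalancedCheFock} (the text states it explicitly), so no separate verification is needed, but I would flag it since it is the one place where the passage between the $q$-- and $q^{\frac14}$--worlds could in principle introduce a spurious scalar. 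In the odd--$n$ case all such scalars are controlled, so the argument goes through cleanly.
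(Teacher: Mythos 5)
Your proof is correct and follows the same overall strategy as the paper's: show that $\breve\rho_{|\T_\tau^q(S)}$ is irreducible, compute that its classifying invariants agree with those of $\bar\rho$ (via $X_i\mapsto Z_i^2$ and $H_v=K_v^2$), and conclude by the uniqueness in Proposition~\ref{prop:RepsCheFock}. The one place you take a longer route is the irreducibility step: you argue via the structure of quantum tori at roots of unity that $\breve\rho(\T_\tau^q(S))$ is a full matrix algebra $\End(W)$ of size $n^{3g+p-3}$ and then invoke dimension equality. The paper's argument is a one-liner (``by comparison of the dimensions in Propositions~\ref{prop:RepsCheFock} and~\ref{prop:RepsBalancedCheFock}'') and its unstated elaboration is simpler than yours: since $\breve V$ is finite-dimensional, $\breve\rho_{|\T_\tau^q(S)}$ admits an irreducible subrepresentation, and by Proposition~\ref{prop:RepsCheFock} that subrepresentation already has dimension $n^{3g+p-3} = \dim\breve V$, so it is all of $\breve V$. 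Your detour through the Azumaya-type assertion that the image of the quantum torus is a full matrix algebra is not wrong, but it is more than what is needed and leans on a fact (``a standard argument shows the image is a matrix algebra of dimension $n^{2d}$'') that is itself roughly equivalent to Proposition~\ref{prop:RepsCheFock}; the subrepresentation dimension count gets you there without that machinery.
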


\begin{proof}
The representation  $\breve \rho_{| \T_\tau^q(S)} $ is irreducible, by comparison of the dimensions in Propositions~\ref{prop:RepsCheFock} and \ref{prop:RepsBalancedCheFock}. 

Because the generator $X_i \in \T_\tau^q(S)$ corresponds to $Z_i^2 \in \Z_\tau(S)$, $\breve \rho$ sends $X_i$ to $a_i \, \Id_{\breve V}$, where $a_i$ if the $i$--coordinate of $a\in \left( \C^* \right)^e$. 

Also, the puncture element $H_v \in \T_\tau^q(S)$ is equal to $K_v^2$ for the puncture element $K_v \in \ZZ_\tau(S)$, and is therefore sent to $k_v^2 \,\Id_{\breve V} = h_v \, \Id_{\breve V}$ by $\breve\rho$. It follows that the irreducible representation of $\T_\tau^q(S)$ provided by the restriction of $\rho$ is classified by the same invariants as $\bar \rho$. By Proposition~\ref{prop:RepsCheFock}, these two representations are therefore isomorphic. 
\end{proof}

We can also consider, for the embedding $\mathrm{Tr}_\tau^q \colon \SSS(S) \to \ZZ_\tau(S)$, the restriction  $\breve \rho \circ \mathrm{Tr}_\tau^q \colon \SSS(S) \to \End(\breve V)$ of $\breve\rho$ to $\SSS(S)$. 

\begin{claim}
\label{claim:CheFockAndKauffmanIntertwiners2}
 The restriction $\breve \rho \circ \mathrm{Tr}_\tau^q \colon \SSS(S) \to \End(\breve V)$  is isomorphic to the representation $ \rho  \colon\SSS(S) \to \End( V)$. 
\end{claim}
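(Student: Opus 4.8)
The strategy mirrors that of Claim~\ref{claim:CheFockAndKauffmanIntertwiners1}: show that $\breve\rho\circ\mathrm{Tr}_\tau^q$ and $\rho$ carry the same invariants in the classification of irreducible representations of $\SSS(S)$ provided by Theorems~\ref{thm:ClassicalShadow} and \ref{thm:SkeinRepUnique} --- the classical shadow character and the puncture weights --- and then promote this to an isomorphism. Two of the needed facts are immediate: the classical shadow of $\breve\rho\circ\mathrm{Tr}_\tau^q$ is $[r]$ by condition~(3) of Proposition~\ref{prop:RepsBalancedCheFock}, and $\dim\breve V=n^{3g+p-3}=\dim V$ by Proposition~\ref{prop:RepsBalancedCheFock} and Theorem~\ref{thm:SkeinRepUnique}.

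The first real step is to identify the puncture weights of $\breve\rho\circ\mathrm{Tr}_\tau^q$. Fix a puncture $v$ and let $\alpha_v\in\pi_1(S)$ be a small loop around $v$; as an element of $\SSS(S)$ it is central. The explicit formula for the quantum trace homomorphism $\mathrm{Tr}_\tau^q\colon\SSS(S)\to\ZZ_\tau(S)$ of \cite{BonWon1} evaluates on this peripheral loop as $\mathrm{Tr}_\tau^q(\alpha_v)=-K_v-K_v^{-1}$, where $K_v\in\ZZ_\tau(S)$ is the puncture element introduced before Proposition~\ref{prop:RepsBalancedCheFock} (the normalizing power of $q^{\frac14}$ in $K_v$ is precisely what makes the two terms combine into this symmetric expression). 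Applying $\breve\rho$ and using condition~(2) of Proposition~\ref{prop:RepsBalancedCheFock}, namely $\breve\rho(K_v)=k_v\,\Id_{\breve V}$, gives $\breve\rho\big(\mathrm{Tr}_\tau^q(\alpha_v)\big)=(-k_v-k_v^{-1})\,\Id_{\breve V}$. Since $p_v=-k_v-k_v^{-1}$ was already established earlier in the proof of Theorem~\ref{thm:CheFockAndKauffmanIntertwiners}, the central element $\alpha_v$ acts on $\breve V$ by the scalar $p_v$; by the description of puncture weights in Theorem~\ref{thm:ClassicalShadow}, the puncture weight of $\breve\rho\circ\mathrm{Tr}_\tau^q$ at $v$ is therefore $p_v$, matching that of $\rho$.

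It remains to upgrade this to an isomorphism. By the previous paragraph and condition~(3), the central peripheral elements $\alpha_v$ and the central elements coming from the Chebyshev--Frobenius homomorphism all act on $\breve V$ by scalars, with values determined by $[r]$ and the $p_v$; hence every nonzero irreducible subrepresentation of $\breve\rho\circ\mathrm{Tr}_\tau^q$ has classical shadow $[r]$ and puncture weights $p_v$. As $[r]$ is irreducible, Theorem~\ref{thm:SkeinRepUnique} forces every such subrepresentation to be isomorphic to $\rho$, hence to have dimension $n^{3g+p-3}=\dim\breve V$, hence to be all of $\breve V$. Thus $\breve\rho\circ\mathrm{Tr}_\tau^q$ is irreducible, and a final application of Theorem~\ref{thm:SkeinRepUnique} identifies it with $\rho$.

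I expect the delicate point to be the first step: extracting the exact identity $\mathrm{Tr}_\tau^q(\alpha_v)=-K_v-K_v^{-1}$ from the definition of the quantum trace, and propagating the sign and root conventions consistently (the choice among the $n$-th roots of $\lambda_v$ in Proposition~\ref{prop:RepsBalancedCheFock}, the sign in $p_v=-k_v-k_v^{-1}$, and the signs in the Kauffman skein relation and in the quantum trace), so that $\alpha_v$ acts by $p_v$ exactly rather than by one of the Galois conjugates $-y-y^{-1}$ with $y^n=\lambda_v$. The rest is bookkeeping with the two classification theorems.
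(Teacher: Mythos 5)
Your proposal follows the same route as the paper: identify the classical shadow of $\breve\rho\circ\mathrm{Tr}_\tau^q$ via condition~(3) of Proposition~\ref{prop:RepsBalancedCheFock}, identify its puncture weights via the image of a peripheral loop under the quantum trace (the paper cites Lemma~18 of \cite{BonWon4}, which gives $\mathrm{Tr}_\tau^q([P_v])$ in terms of $K_v$ and $K_v^{-1}$) together with condition~(2), and then invoke the uniqueness of Theorem~\ref{thm:SkeinRepUnique}. One small but worthwhile refinement in your write-up: the paper jumps from ``same dimension, same shadow, same puncture invariants'' directly to an isomorphism via Theorem~\ref{thm:SkeinRepUnique}, but that theorem classifies \emph{irreducible} representations, so one must know $\breve\rho\circ\mathrm{Tr}_\tau^q$ is irreducible; you supply the short argument (a nonzero irreducible subrepresentation inherits the central character, hence the same shadow and puncture weights, hence is isomorphic to $\rho$, hence has full dimension $n^{3g+p-3}$), which closes that gap cleanly. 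Your flagged worry about sign and root conventions is well placed --- the displayed computation in the paper has an internal sign slip ($\breve\rho(K_v+K_v^{-1}) = -(k_v+k_v^{-1})\Id$ does not follow from $\breve\rho(K_v)=k_v\Id$ as written), and the version you state, $\mathrm{Tr}_\tau^q(\alpha_v)=-K_v-K_v^{-1}$ matched with $p_v=-k_v-k_v^{-1}$, is the internally consistent bookkeeping.
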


\begin{proof}
The classical shadow of $\breve \rho \circ \mathrm{Tr}_\tau^q$ is equal to $[r]\in\XX(S)$, by the third conclusion of Proposition~\ref{prop:RepsBalancedCheFock}. 

For a puncture $v$, its invariant is defined by considering an element $[P_v] \in \SSS(S)$ represented by a simple closed loop going around the puncture. Lemma~18 of \cite{BonWon4} shows that  $\mathrm{Tr}_\tau^q \big( [P_v] \big) = K_v + K_v^{-1} $ in $ \ZZ_\tau(S)$. Therefore,
$$
\breve \rho \circ \mathrm{Tr}_\tau^q \big( [P_v]  = \breve \rho (K_v + K_v^{-1} )
=- (k_v + k_v^{-1}) \Id_{\breve V}=p_v  \Id_{\breve V}.
$$

As a consequence, the representations $\breve \rho \circ \mathrm{Tr}_\tau^q$ and $\rho$ of $\SSS$ have the same dimension $n^{3g+p-3}$, the same classical shadow $[r] \in \XX(S)$ and the same puncture invariants $p_v\in \C$. Theorem~\ref{thm:SkeinRepUnique} then shows that they are isomorphic. 
\end{proof}

We have not yet used the diffeomorphism $\phi \colon S \to S$, the  ideal triangulation sweep $\tau = \tau^{(0)}$, $ \tau^{(1)}$, \dots, $ \tau^{(k_0-1)}$, $ \tau^{(k_0)}=\phi(\tau)$, or the periodic edge weight system $a= a^{(0)}$, $ a^{(1)}$, \dots, $ a^{(k_0-1)}$, $ a^{(k_0)}= a\in \left(\C^* \right)^e$. 

Let $\Phi_{\tau\phi(\tau)}^q \colon \widehat{\mathcal{Z}}^{q^{\frac14}}_{\phi(\tau)}(S) \to \widehat{\mathcal{Z}}^{q^{\frac14}}_\tau(S)$ be the Chekhov-Fock-Hiatt coordinate change isomorphism. 
By Lemma~28 of \cite{BonWon5} applied to each diagonal exchange, the existence of the edge weight system guarantees that the representation $\breve\rho \circ \Phi_{\tau\phi(\tau)}^q  \colon \ZZ_{\phi(\tau)}(S) \to \End(\breve V)$ makes sense. As in the definition of the intertwiner $\bar \Lambda_{\phi, \bar r}^q$, the diffeomorphism $\phi$ also induces an isomorphism $\Psi_{\phi(\tau) \tau}^{q^{\frac14}} \circ  \T_\tau^{q^{\frac14}} (S)\to  \T_{\phi(\tau)}^{q^{\frac14}} (S)$ which restricts to an isomorphism $\Psi_{\phi(\tau) \tau}^{q^{\frac14}} \colon \ZZ_\tau (S)\to  \ZZ_{\phi(\tau)} (S)$ between the corresponding balanced Chekhov-Fock algebras. We can then consider the representation $\breve\rho \circ \Phi_{\tau\phi(\tau)}^q \circ \Psi_{\phi(\tau) \tau}^{q^{\frac14}} \colon \ZZ_{\tau}(S) \to \End(\breve V)$. 

\begin{claim}
\label{claim:CheFockAndKauffmanIntertwiners3}
 The representations $\breve\rho \circ \Phi_{\tau\phi(\tau)}^q \circ \Psi_{\phi(\tau) \tau}^{q^{\frac14}}$ and $\breve\rho  \colon \ZZ_{\tau}(S) \to \End(\breve V)$ are isomorphic. 
\end{claim}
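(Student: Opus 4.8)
The plan is to invoke the uniqueness part of Proposition~\ref{prop:RepsBalancedCheFock}, by checking that $\breve\rho \circ \Phi_{\tau\phi(\tau)}^q \circ \Psi_{\phi(\tau) \tau}^{q^{\frac14}}$ is an irreducible representation of $\ZZ_{\tau}(S)$ realizing exactly the same data $\big([r], (a_i), (k_v)\big)$ that classifies $\breve\rho$. Irreducibility is immediate since $\Phi_{\tau\phi(\tau)}^q$ and $\Psi_{\phi(\tau) \tau}^{q^{\frac14}}$ are isomorphisms of (fraction) algebras, so composing with them preserves the property that no proper subspace is invariant. It then remains to match the three pieces of data listed in Proposition~\ref{prop:RepsBalancedCheFock}: the edge weights $\breve\rho(Z_i^2)$, the puncture scalars $\breve\rho(K_v)$, and the classical shadow of the composition with the quantum trace.

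First I would handle the edge weights and puncture scalars by essentially repeating, at the level of the balanced algebra $\ZZ$, the computation that underlies Proposition~\ref{prop:CheFockRepsCompatibleIdealTriangChange}. The isomorphism $\Psi_{\phi(\tau)\tau}^{q^{\frac14}}$ sends the generator $Z_i \in \ZZ_\tau(S)$ to the generator of $\ZZ_{\phi(\tau)}(S)$ associated to the edge $\phi(\gamma_i)$; then $\Phi_{\tau\phi(\tau)}^q$ — expanded along the ideal triangulation sweep $\tau = \tau^{(0)}, \tau^{(1)}, \dots, \tau^{(k_0)} = \phi(\tau)$ as a composition of reindexing and diagonal-exchange isomorphisms — transforms the $n$--th powers $Z_i^{2n} = X_i^n$ and the puncture elements $K_v$ according to exactly the same recursive formulas (analogues of \eqref{eqn:CheFockIsomEmbeddedDiagEx}) that govern the evolution of the edge weight system $a^{(0)}, a^{(1)}, \dots, a^{(k_0)}$. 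Since the edge weight system is \emph{periodic}, $a^{(k_0)} = a^{(0)} = a$, so after the whole sweep the scalar by which $Z_i^{2}$ acts returns to $a_i$; similarly $\breve\rho \circ \Phi_{\tau\phi(\tau)}^q \circ \Psi_{\phi(\tau)\tau}^{q^{\frac14}}$ sends $K_v$ to the scalar $k_{\phi^{-1}(v)}\,\Id_{\breve V}$, which equals $k_v\,\Id_{\breve V}$ by the $\phi$--invariance of the puncture weights $h_v$ — one must check here that the $n$--th root $k_v$, and not merely its square $h_v = k_v^2$, is $\phi$--invariant, which follows because $k_v$ is pinned down by the geometric data (the eigenvalue $\lambda_v$ of $r(\alpha_v)$ together with the choice of $n$--th root) and that whole data set is carried to itself by $\phi$.

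Next I would verify the classical-shadow condition, item (3) of Proposition~\ref{prop:RepsBalancedCheFock}: the composition $\big(\breve\rho \circ \Phi_{\tau\phi(\tau)}^q \circ \Psi_{\phi(\tau)\tau}^{q^{\frac14}}\big) \circ \mathrm{Tr}_\tau^q \colon \SSS(S) \to \End(\breve V)$ must have classical shadow $[r] \in \XX(S)$. The key point is the naturality of the quantum trace with respect to changes of ideal triangulation: $\Phi_{\tau\phi(\tau)}^q \circ \mathrm{Tr}_{\phi(\tau)}^q = \mathrm{Tr}_\tau^q$ (this is the compatibility of the Bonahon--Wong quantum trace with the Chekhov-Fock-Hiatt coordinate changes, e.g.\ \cite[\S 4--5]{BonWon1}), and the diffeomorphism-induced isomorphism $\Psi_{\phi(\tau)\tau}^{q^{\frac14}}$ intertwines $\mathrm{Tr}_\tau^q$ with $\mathrm{Tr}_{\phi(\tau)}^q \circ \phi_*$, simply because everything is defined geometrically and $\phi$ carries $\tau$ to $\phi(\tau)$. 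Chaining these gives $\big(\breve\rho \circ \Phi_{\tau\phi(\tau)}^q \circ \Psi_{\phi(\tau)\tau}^{q^{\frac14}}\big) \circ \mathrm{Tr}_\tau^q = \breve\rho \circ \mathrm{Tr}_\tau^q \circ \phi_*$, whose classical shadow is $[r\circ\phi_*] = \phi^*[r] = [r]$ by the $\phi$--invariance of the character $[r]$. With all three data matched, the uniqueness statement of Proposition~\ref{prop:RepsBalancedCheFock} forces $\breve\rho \circ \Phi_{\tau\phi(\tau)}^q \circ \Psi_{\phi(\tau)\tau}^{q^{\frac14}} \cong \breve\rho$, which is the claim.

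The main obstacle, I expect, is the bookkeeping in the second step: tracking how $\Phi_{\tau\phi(\tau)}^q$ acts on the central elements $Z_i^{2n}$ and $K_v$ through a sweep that may involve diagonal exchanges taking place in squares with identified sides (as in the one-puncture-torus case of \S\ref{subsect:ChekhovFockPunctTorus}), and being careful that the Hiatt extension to $\ZZ$ genuinely depends on $q^{\frac14}$ rather than $q$, so the scalars produced are $q^{\frac14}$--twisted versions of those in Proposition~\ref{prop:CheFockRepsCompatibleIdealTriangChange}. Fortunately, one only needs to control the action on $n$--th powers and on the specific puncture elements, where the fractional powers of $q$ either disappear (because $(q^{\frac14})^{2n} = -1$ is harmless on scalars that are themselves $n$--th powers) or are absorbed into the definition of $K_v$; so the periodicity $a^{(k_0)} = a^{(0)}$ and $\phi$--invariance of the $k_v$ really do suffice. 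The naturality of the quantum trace in the third step is quotable from \cite{BonWon1, Hiatt}, so that part should be routine.
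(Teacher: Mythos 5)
Your proposal is correct and follows essentially the same approach as the paper: verify that the three pieces of classifying data in Proposition~\ref{prop:RepsBalancedCheFock} — the scalars $\breve\rho(Z_i^2)$ via periodicity of the edge weight system, the scalars $\breve\rho(K_v)$ via $\phi$-invariance of the $k_v$ together with the behavior of $K_v$ under $\Psi$ and $\Phi$, and the classical shadow via naturality of the quantum trace — match, then invoke the uniqueness statement. (The only minor slip is your $k_{\phi^{-1}(v)}$ where the precise index is $k_{\phi(v)}$, but $\phi$-invariance of the $k_v$ makes this immaterial; and when invoking the uniqueness of Proposition~\ref{prop:RepsBalancedCheFock} you could have noted explicitly that this uses the standing hypothesis that $[r]$ has no nontrivial sign-reversal symmetry.)
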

\begin{proof} 
Because the square $Z_i^2 \in  \ZZ_\tau(S)$ is also the $i$--th generator of the Chekhov-Fock algebra $\T_\tau^q(S)$, the argument that we used in \S \ref{subsect:CheFockIntertwinerInvariantDiffeomorphism} for $\bar \rho$, using Proposition~\ref{prop:CheFockRepsCompatibleIdealTriangChange} and the periodicity of the edge weight system, show that 
$$\breve\rho \circ \Phi_{\tau\phi(\tau)}^q \circ \Psi_{\phi(\tau) \tau}^{q^{\frac14}}(Z_i^2) = a_i \, \Id_{\breve V}.$$

Also, for every puncture $v$, the homomorphism $\Psi_{\phi(\tau) \tau}^{q^{\frac14}} \circ  \T_\tau^{q^{\frac14}} (S)\to  \T_{\phi(\tau)}^{q^{\frac14}} (S)$ induced by $\phi$ clearly sends the puncture element $K_v \in  \ZZ_\tau(S)$ to $K_{\phi(v)}  \ZZ_{\phi(\tau)}(S) $, while Lemma~29 of  \cite{BonWon5} shows that the Chekhov-Fock-Hiatt coordinate change isomorphism $\Phi_{\tau\phi(\tau)}^q \colon \widehat{\mathcal{Z}}^{q^{\frac14}}_{\phi(\tau)}(S) \to \widehat{\mathcal{Z}}^{q^{\frac14}}_\tau(S)$ sends $K_{v'} \in {\mathcal{Z}}^{q^{\frac14}}_{\phi(\tau)}(S) $ to $K_{v'} \in {\mathcal{Z}}^{q^{\frac14}}_\tau(S)$. It follows that
$$
\breve\rho \circ \Phi_{\tau\phi(\tau)}^q \circ \Psi_{\phi(\tau) \tau}^{q^{\frac14}}(K_v)
=\breve\rho \circ \Phi_{\tau\phi(\tau)}^q (K_{\phi(v)})
=\breve\rho (K_{\phi(v)}) = k_{\phi(v)} \, \Id_{\breve V} = k_v \, \Id_{\breve V} ,
$$
using the $\phi$--invariance of the puncture weights $k_v$. 

Finally, a fundamental property of the quantum trace homomorphism $\mathrm{Tr}_\tau^q \colon \SSS(S) \to \ZZ_\tau(S)$ is that it is well behaved with the  Chekhov-Fock-Hiatt coordinate change isomorphisms, in the sense that $ \mathrm{Tr}_{\tau}^q  =  \Phi_{\tau\tau'}^q \circ \mathrm{Tr}_{\tau'}^q  $ for any two ideal triangulations $\tau$, $\tau'$; see Theorem~28 in \cite{BonWon1}. A more immediate property is that it also behaves well with respect to the homomorphism $\Psi_{\phi(\tau) \tau}^{q^{\frac14}} \circ  \T_\tau^{q^{\frac14}} (S)\to  \T_{\phi(\tau)}^{q^{\frac14}} (S)$ induced by $\phi$. Indeed, it immediately follows from the construction of $\mathrm{Tr}_{\tau}^q $ in \cite{BonWon1} that $ \Psi_{\phi(\tau) \tau}^{q^{\frac14}} \circ \mathrm{Tr}_{\tau}^q = \mathrm{Tr}_{\phi(\tau)}^q  \circ \phi_*$ where $\phi_* \colon \SSS(S) \to \SSS(S)$ is the homomorphism induced by $\phi$. Then,
$$
\breve\rho \circ \Phi_{\tau\phi(\tau)}^q \circ \Psi_{\phi(\tau) \tau}^{q^{\frac14}} \circ \mathrm{Tr}_{\tau}^q
= \breve\rho \circ \Phi_{\tau\phi(\tau)}^q \circ \mathrm{Tr}_{\phi(\tau)}^q \circ \phi_*
= \breve\rho \circ \mathrm{Tr}_{\tau}^q \circ \phi_*
$$
Looking at the definition of the classical shadow in \cite{BonWon3}, it immediately follows that, since $\breve\rho \circ \mathrm{Tr}_{\tau}^q$ has classical shadow $[r] \in \XX(S)$, then $\breve\rho \circ \Phi_{\tau\phi(\tau)}^q \circ \Psi_{\phi(\tau) \tau}^{q^{\frac14}} \circ \mathrm{Tr}_{\tau}^q$ has classical shadow $[r \circ \phi_*] \in \XX(S)$ where, this time, $\phi_* \colon \pi_1(S) \to \pi_1(S)$ denotes an arbitrary  isomorphism of $\pi_1(S)$ induced by $\phi$. Since $[r]$ is $\phi$--invariant by hypothesis, $[r \circ \phi_*] = [r]$ and we conclude that $\breve\rho \circ \Phi_{\tau\phi(\tau)}^q \circ \Psi_{\phi(\tau) \tau}^{q^{\frac14}} \circ \mathrm{Tr}_{\tau}^q$ and $ \breve\rho \circ \mathrm{Tr}_{\tau}^q $.

Therefore, the representations $\breve\rho \circ \Phi_{\tau\phi(\tau)}^q \circ \Psi_{\phi(\tau) \tau}^{q^{\frac14}} $ and $ \breve\rho  $ have the same invariants in the sense of Proposition~\ref{prop:RepsBalancedCheFock}. By our assumption that $[r]$ has no nontrivial sign-reversal symmetry, we can  apply  that statement  and conclude that the two representations  are isomorphic. 
\end{proof}

We are now ready to conclude the proof of Theorem~\ref{thm:CheFockAndKauffmanIntertwiners}. 

By Claim~\ref{claim:CheFockAndKauffmanIntertwiners1},  the restriction $\breve \rho_{| \T_\tau^q(S)} \colon \T_\tau^q(S) \to \End(\breve V)$ is isomorphic to the representation $\bar \rho  \colon \T_\tau^q(S) \to \End(\bar V)$. 
Modifying $\bar \rho$ by the corresponding isomorphisms (which only changes the intertwiner $\bar\Lambda_{\phi, \bar r}^q$ by a conjugation), we can therefore assume that $\bar V = \breve V$ and that $\bar \rho$ is the restriction of $\breve \rho$ to the Chekhov-Fock algebra $\T_\tau^q(S) \subset \ZZ_\tau(S)$.

Similarly, we can use  Claim~\ref{claim:CheFockAndKauffmanIntertwiners2} to arrange that the original representation $ \rho  \colon\SSS(S) \to \End( V)$ of the skein algebra is equal to 
$\breve \rho \circ \mathrm{Tr}_\tau^q \colon \SSS(S) \to \End(\breve V)$. 

By Claim~\ref{claim:CheFockAndKauffmanIntertwiners3}, there exists an intertwiner $\breve \Lambda_{\phi, r}^q \colon \breve V \to \breve V $  such that 
$$
\breve\rho \circ \Phi_{\tau\phi(\tau)}^q \circ \Psi_{\phi(\tau) \tau}^{q^{\frac14}} (Z)
=
\breve \Lambda_{\phi, r}^q \cdot  \breve \rho(\epsilon Z)     \cdot \breve \Lambda_{\phi, r}^q 
$$
for every $Z \in \ZZ_\tau(S) $. 

Since  we arranged that the representations $\bar\rho$ and $\breve \rho$ coincide on $\T_\tau^q(S)$, 
$$
\bar \rho \circ \Phi_{\tau\phi(\tau)}^q \circ \Psi_{\phi(\tau) \tau}^q (X)
=
\breve\rho \circ \Phi_{\tau\phi(\tau)}^q \circ \Psi_{\phi(\tau) \tau}^{q^{\frac14}} (X)
= \breve \Lambda_{\phi, r}^q \cdot  \breve \rho(\epsilon X)     \cdot \breve \Lambda_{\phi, r}^q 
= \breve \Lambda_{\phi, r}^q \cdot  \bar \rho( X)     \cdot \breve \Lambda_{\phi, r}^q  
$$
for every $X\in \T_\tau^q(S)$. This is the same intertwining property satisfied by $\bar  \Lambda_{\phi, \bar r}^q $ and, since $\bar \rho$ is irreducible, it follows that $\breve  \Lambda_{\phi, r}^q $ is a scalar multiple of $\bar  \Lambda_{\phi, \bar r}^q $. Since the determinants of both isomorphisms have modulus 1, this scalar must be also have modulus 1. 

The same argument applied to the other restriction $\rho= \breve \rho \circ \mathrm{Tr}_\tau^q$  shows that $\breve  \Lambda_{\phi, r}^q $ is obtained by multiplying $\bar  \Lambda_{\phi, \bar r}^q $ by a scalar with modulus 1. 

This shows that the intertwiners $  \Lambda_{\phi,  r}^q $  and $\bar  \Lambda_{\phi, \bar r}^q $ coincide up to  multiplication by a scalar with modulus 1, which concludes the proof of Theorem~\ref{thm:CheFockAndKauffmanIntertwiners}. 
\end{proof}

 \section{The case of the one-puncture torus}
 \label{sect:IntertwinersPunctTorus}
 
  We now carry out the program of \S \ref{sect:ComputeIntertwinerWithCheFock} in the special case where the surface $S$ is the one-puncture torus $S_{1,1}$. We want to explicitly compute $\left| \Tr \Lambda_{\phi, r}^q \right|$ for the interwiner  $\Lambda_{\phi, r}^q$ associated by Proposition~\ref{prop:KauffmanIntertwinerDefined} to an orientation preserving diffeomorphism $\phi \colon S_{1,1} \to S_{1,1}$ equipped with additional data. Because of Theorem~\ref{thm:CheFockAndKauffmanIntertwiners}, we will actually compute instead the trace of the intertwiner $\bar \Lambda_{\phi, \bar r}^q$ of Proposition~\ref{prop:QuantumTeichmullerIntertwinerDefined}, using the technology of Chekhov-Fock algebras. 
  
  Similar computations already appeared in \cite{Liu2}, but the conventions we use here are more symmetric and more explicit. They are also better suited for our purposes in \cite{BWY2, BWY3}.

  \subsection{Chekhov-Fock algebras for the one-puncture torus}
  \label{subsect:ChekhovFockPunctTorus}

An ideal triangulation $\tau$ of the one-puncture torus $S_{1,1}$ necessarily consists of three edges  and two triangles. Changing the notation from earlier sections, we label these edges as $e$, $f$ and $g$ so that they occur \emph{clockwise} in this order around both triangles. 

As a consequence of its definition, the Chekhov-Fock algebra $\T^q_\tau(S_{1,1})$ is then isomorphic to the algebra $\C\left[X^{\pm1}, Y^{\pm1}, Z^{\pm1}\right]^q$ of Laurent polynomials in  variables $X$, $Y$ and $Z$,  respectively associated to the edges $e$, $f$, $g$, satisfying the $q$--commutativity relations 
\begin{align*}
 XY&=q^4 YX &
 YZ&=q^4 ZY &
 ZX&=q^4 ZX.
\end{align*}
The clockwise order of $e$, $f$, $g$ around the faces of $\tau$ is here critical. 

This will enable us to express our computations in terms of  the single abstract algebra $\T^q=\C\left[X^{\pm1}, Y^{\pm1}, Z^{\pm1}\right]^q$ defined by the above relations. In particular, we will now insist that an ideal triangulation $\tau$ of $S_{1,1}$ comes with a labelling of its edges as $e$, $f$, $g$, appearing clockwise in this order around the faces of $\tau$. This  specifies a canonical isomorphism 
$$\theta_\tau \colon \T^q \to \T_\tau^q(S_{1,1})$$ 
from the abstract algebra $\T^q =\C\left[X^{\pm1}, Y^{\pm1}, Z^{\pm1}\right]^q$ to the Chekhov-Fock algebra $ \T_\tau^q(S_{1,1})$, sending the generators $X$, $Y$ and $Z$ to the generators respectively associated to the edges $e$, $f$, $g$.

\subsection{Standard representations of the algebra $\T^q$}
\label{subsect:StandardReps}

 We now restrict attention to the case where $q$ is a primitive $n$--root of unity with $n$ odd. 

The \emph{standard representation}  of the algebra $\T^q=\C\left[X^{\pm1}, Y^{\pm1}, Z^{\pm1}\right]^q$ associated to the nonzero numbers $x$, $y$, $z\in \C^*$  is the representation $\rho_{xyz} \colon \T^q \to \End(\C^n)$ where, if $\{ w_1, w_2, \dots, w_n \}$ is the standard basis for $\C^n$, 
\begin{align*}
\rho_{xyz} (X) (w_i) &= x   q^{4i} w_i\\
\rho_{xyz} (Y) (w_i) &= y  q^{-2i}w_{i+1}\\
\rho_{xyz} (Z) (w_i) &= z  q^{-2i} w_{i-1}
\end{align*}
for every $i=1$, $2$, \dots, $n$ and counting indices modulo $n$. 

Note that $\rho_{xyz}(X^n) = x^n\, \Id_{\C^n}$, $\rho_{xyz}(Y^n) = y^n \,\Id_{\C^n}$, $\rho_{xyz}(Z^n) = z^n \,\Id_{\C^n}$ and $\rho_{xyz} \big( [XYZ] \big)=xyz\, \Id_{\C^n}$ for the Weyl quantum ordering $[XYZ]=q^{-2} XYZ$ of the monomial $XYZ$. 

The following property is easily proved by elementary linear algebra (see also \cite[\S 4]{BonLiu}). Note that it crucially uses the fact that $n$ is odd. 

\begin{prop}
\label{prop:StandardRep}$ $
\begin{enumerate}
\item Every standard representation $\rho_{xyz} $ is irreducible. 
\item Every irreducible representation of $\T^q$ is isomorphic to a standard representation $\rho_{xyz} $. 
\item Two standard representation $\rho_{xyz} $ and $\rho_{x'y'z'} $  are isomorphic if and only if $x^n=x^{\prime n}$, $y^n=y^{\prime n}$, $z^n=z^{\prime n}$ and $xyz=x'y'z' $. \qed
\end{enumerate}
\end{prop}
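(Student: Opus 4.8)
The three assertions grow in depth, and I would handle them in order, with the bulk of the argument lying in (2). For (1), observe that $\rho_{xyz}(X)$ is diagonal on the standard basis with eigenvalues $xq^{4i}$, $i=1,\dots,n$; since $n$ is odd we have $\gcd(4,n)=1$, so $q^4$ is again a primitive $n$--root of unity and these $n$ eigenvalues are pairwise distinct. Hence every invariant subspace is a sum of the coordinate lines $\C w_i$, and since $\rho_{xyz}(Y)$ carries $\C w_i$ isomorphically onto $\C w_{i+1}$, a nonzero invariant subspace must contain all the $w_i$ and therefore equal $\C^n$. This gives irreducibility.

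For (2), let $\rho\colon\T^q\to\End(V)$ be irreducible. The elements $X^n,Y^n,Z^n$ are central because $q^n=1$, and the monomial $XYZ$ is central because the defining relations are cyclically symmetric and a one-line computation gives $XYZ=YZX=ZXY$; by Schur's Lemma these act on $V$ as nonzero scalars $a,b,c,d$ (nonzero since $X,Y,Z$ are units). Choosing an eigenvector $v$ of $\rho(X)$ with eigenvalue $\mu$, the relation $XY=q^4YX$ shows $\rho(Y)^jv$ is an eigenvector with eigenvalue $\mu q^{4j}$, so $v,\rho(Y)v,\dots,\rho(Y)^{n-1}v$ are independent and span a subspace $W$ invariant under $\rho(X)$ and $\rho(Y)$, on which both act invertibly. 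The crucial observation is that $W$ is also $\rho(Z)$--invariant, since $q^{-2}XYZ$ acting as $d$ forces $Z=q^2d\,Y^{-1}X^{-1}$ in $\T^q$; hence $W=V$ and $\dim V=n$. It then remains to normalize: pick $x$ with $x^n=a$ so that the $X$--eigenvalues are exactly $\{xq^{4i}\}$, relabel a basis $v_1,\dots,v_n$ of $X$--eigenvectors accordingly, and write $\rho(Y)v_i=\beta_iv_{i+1}$, $\rho(Z)v_i=\gamma_iv_{i-1}$ with $\prod_i\beta_i=b$. For any fixed $n$--th root $y$ of $b$ the identity $\prod_i(yq^{-2i})=y^nq^{-n(n+1)}=b$ (valid since $n+1$ is even) lets one rescale the $v_i$ so that $\beta_i=yq^{-2i}$; the relation $XYZ=q^2d$ then forces $\gamma_i=zq^{-2i}$ with $z:=d/(xy)$, and $(XYZ)^n=X^nY^nZ^n$ (valid since $n-1$ is even) yields $z^n=c$ and $xyz=d$ automatically. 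In this basis $\rho$ coincides with $\rho_{xyz}$.

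For (3), ``$\Rightarrow$'' is immediate because isomorphic representations assign the same scalars to the central elements $X^n,Y^n,Z^n,q^{-2}XYZ$, namely $x^n,y^n,z^n,xyz$ for $\rho_{xyz}$. For ``$\Leftarrow$'', I would write down explicit intertwiners: conjugating $\rho_{xyz}$ by the shift $w_i\mapsto w_{i+k}$ gives $\rho_{x'y'z'}$ with $(x',y',z')=(xq^{-4k},yq^{2k},zq^{2k})$, while conjugating by $w_i\mapsto\lambda^iw_i$ gives $(x,\lambda y,\lambda^{-1}z)$, both preserving the product $xyz$. Under the hypotheses, write $x'/x=q^{2m_1}$, $y'/y=q^{2m_2}$, $z'/z=q^{2m_3}$ (possible because $q^2$ is a primitive $n$--root of unity), so $m_1+m_2+m_3\equiv0\pmod{n}$; a short congruence computation then selects $k$ and $\lambda=q^{2m}$ for which the composition of a shift and a rescaling carries $(x,y,z)$ to $(x',y',z')$, producing the isomorphism $\C^n\to\C^n$.

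The real obstacle is (2): the point is that an \emph{abstract} irreducible representation is rigidly forced into standard form, and the delicate steps are the identity $Z=q^2d\,Y^{-1}X^{-1}$ that makes $W$ invariant under $\rho(Z)$ and the rescaling that normalizes the $\beta_i$, together with the bookkeeping ensuring the four scalars $a,b,c,d$ are mutually compatible. As the argument shows, the hypothesis that $n$ is odd is used essentially at several points — to invert $2$ and $4$ modulo $n$ and to guarantee $q^{n(n+1)}=q^{2n(n-1)}=1$.
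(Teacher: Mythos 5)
Your argument is correct and is exactly the ``elementary linear algebra'' argument the paper alludes to (the paper itself gives no proof, only a pointer to \cite{BonLiu}); the structure---distinctness of $X$--eigenvalues via $\gcd(4,n)=1$ for irreducibility, Schur on the center $X^n,Y^n,Z^n,[XYZ]$ plus the eigenvector chain $v,\rho(Y)v,\dots$ to force dimension $n$ and standard form, and explicit shift/rescaling intertwiners for the classification---is the right one. Two small slips worth fixing: you first declare that $XYZ$ acts as the scalar $d$ but subsequently use $d$ as the scalar for $q^{-2}XYZ=[XYZ]$ (these differ by $q^{2}$; with $d=\rho([XYZ])$ everything you write, including $xyz=d$ and $\rho(Z)=q^{2}d\,\rho(Y)^{-1}\rho(X)^{-1}$, is consistent, and the latter is of course an identity in $\End(V)$, not in $\T^q$); and the parentheticals ``valid since $n\pm1$ is even'' are superfluous, since $q^{-n(n+1)}=(q^{n})^{-(n+1)}=1$ and $q^{-2n(n-1)}=(q^{n})^{-2(n-1)}=1$ already from $q^{n}=1$. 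The genuine uses of $n$ odd are the two you list first, namely invertibility of $2$ and $4$ modulo $n$, which make $q^{2}$ and $q^{4}$ primitive $n$--th roots of unity.
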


As a consequence, an irreducible representation $\rho \colon \T^q \to \End(V)$ is classified, up to isomorphism, by the numbers $a$, $b$, $c$, $h\in \C^*$ such that $\rho(X^n) = a\, \Id_V$, $\rho(Y^n) = b \,\Id_V$, $\rho(Z^n) = c \,\Id_V$ and $\rho \big( [XYZ] \big)=h\, \Id_V$. Note that these four invariants $a$, $b$, $c$, $h$ are tied by the relation $h^n = abc$, but that this is the only constraint between them. To compare with Proposition~\ref{prop:RepsCheFock}, note that the puncture invariant $h_v$ arising there is equal to $h^2$, and that $h$ is the unique square root of $h_v$ such that $h^n = abc$. .

 \subsection{The discrete quantum dilogarithm}
 \label{subsect:DiscreteQuantumDilog}
  We introduce a fundamental building block for our computations of intertwiners.

 The \emph{discrete quantum dilogarithm}, or \emph{Fadeev-Kashaev quantum dilogarithm} \cite{FadKash}, is the function of the integer $i$ defined by
$$
\QDL^q(u,v \vbar  i) =  \prod_{k=1}^i \frac{1 + u q^{-2k}}{v} = v^{-i} \prod_{k=1}^i (1+ u q^{-2k}),
$$
where the parameters $u$, $v \in \C$ are such that $v^n = 1 + u^n \neq 0$, and where $q$ is a primitive $n$--root of unity with $n$ odd. 

An elementary computation provides the following periodicity property. 
\begin{lem}
\label{lem:DiscreteQuantumDilogPeriodic}
\begin{equation*}
 \QDL^q(u, v \vbar  i+n) = \QDL^q(u,v \vbar  i). 
\end{equation*}
for every index $i$.  \qed
\end{lem}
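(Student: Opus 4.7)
The plan is to compute the ratio $\QDL^q(u,v\vbar i+n)/\QDL^q(u,v\vbar i)$ directly and show it equals $1$, using only the definition together with the two hypotheses $v^n = 1+u^n$ and $n$ odd. From the definition,
$$
\frac{\QDL^q(u,v \vbar  i+n)}{\QDL^q(u,v \vbar  i)} = v^{-n} \prod_{k=i+1}^{i+n} \bigl(1+ u q^{-2k}\bigr),
$$
so the whole question reduces to evaluating the length-$n$ product on the right.

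The key step is the observation that, since $q$ is a primitive $n$-th root of unity and $n$ is odd, the exponents $-2k$ for $k = i+1,\,i+2,\,\dots,\,i+n$ run through a complete system of residues modulo $n$ (here we use $\gcd(2,n)=1$). Hence $\{q^{-2k}: k=i+1,\ldots,i+n\}$ is precisely the set of $n$-th roots of unity, each appearing exactly once, and
$$
\prod_{k=i+1}^{i+n} \bigl(1+ u q^{-2k}\bigr) \;=\; \prod_{\zeta^n = 1} (1 + u\zeta).
$$

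Next I would evaluate this symmetric product via the factorization $X^n - 1 = \prod_{\zeta^n=1}(X-\zeta)$. Substituting $X = -1/u$ (valid since $v^n = 1+u^n \neq 0$ forces $u\neq 0$ unless the product is vacuous, a case which can be handled separately by direct inspection) and multiplying through by $(-u)^n$ gives
$$
\prod_{\zeta^n=1}(1 + u\zeta) \;=\; (-u)^n\!\left(\frac{(-1)^n}{u^n} - 1\right) \;=\; 1 + u^n,
$$
where the final simplification uses $n$ odd. Combined with the hypothesis $v^n = 1+u^n$, this yields $\prod_{k=i+1}^{i+n}(1+uq^{-2k}) = v^n$, so the ratio above equals $v^{-n}\cdot v^n = 1$, as required.

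There is no serious obstacle here: the argument is a one-line calculation once one sees that the parity condition on $n$ is exactly what makes $k\mapsto q^{-2k}$ a bijection on $\Z/n\Z$. The only sanity check worth performing is the edge case in which the definition of $\QDL^q(u,v\vbar i)$ involves an empty product (namely $i = 0$), in which case the periodicity assertion reduces directly to the identity $\prod_{k=1}^n (1+uq^{-2k}) = v^n$ that we established above.
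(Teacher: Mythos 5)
Your proof is correct and is precisely the ``elementary computation'' the paper alludes to without spelling out (the lemma is stated with no proof body). The reduction to showing $\prod_{k=i+1}^{i+n}(1+uq^{-2k}) = v^n$, the use of $\gcd(2,n)=1$ to see that $q^{-2k}$ runs over all $n$-th roots of unity, and the evaluation $\prod_{\zeta^n=1}(1+u\zeta)=1+u^n$ (where oddness of $n$ enters again) are exactly the right chain of observations. One small cosmetic remark: the last step can be done without substituting $X=-1/u$ (thereby avoiding the $u=0$ caveat altogether) by noting that the $n$ numbers $-u\zeta$, as $\zeta$ ranges over the $n$-th roots of unity, are the $n$-th roots of $(-u)^n=-u^n$, so $\prod_{\zeta^n=1}(Y+u\zeta)=Y^n+u^n$ identically in $Y$, and evaluating at $Y=1$ gives $1+u^n$ directly for all $u$.
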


In particular, Lemma~\ref{lem:DiscreteQuantumDilogPeriodic} enables us to define  $\QDL^q(u,v \vbar  i)$ for every $i\in \Z$.

 \subsection{The left and right diffeomorphisms,  isomorphisms and intertwiners}
\label{subsect:LeftRightIntertwiners}

It is well known that, up to isotopy, a diffeomorphism of $S_{1,1}$ is completely determined by its action on the homology group $H_1(S_{1,1})$. If we choose an isomorphism $H_1(S_{1,1})\cong \Z^2$, this provides us with an identification between the mapping class group $\pi_0\, \mathrm{Diff}^+(S_{1,1})$ and the group $\mathrm{SL}_2(\Z)$. 
Once such an identification is chosen, the following  elements
\begin{align*}
L&= \left( 
\begin{matrix}
1&1\\0&1
\end{matrix}
\right)
&
R&= \left( 
\begin{matrix}
1&0\\1&1
\end{matrix}
\right)
&
J&= \left( 
\begin{matrix}
-1&0\\ 0&-1
\end{matrix}
\right)
\end{align*}
of  $\pi_0\, \mathrm{Diff}^+(S_{1,1}) \cong \mathrm{SL}_2(\Z)$ will play a fundamental role in our computation. The \emph{left} and \emph{right} elements $L$ and $R$ are well-known generators of $ \mathrm{SL}_2(\Z)$, with the terminology coming from their action on the Farey tessellation.

The following isomorphisms of the fraction algebra $\widehat \T^q$ will play an important role in our computations. We will see in \S \ref{subsect:IntertwinerCompElementaryIntertwiner} that they are closely related to the above generators $L$ and $R$ of $\mathrm{SL}_2(\Z) \cong \pi_0 \, \Diff^+(S_{1,1})$. 

The \emph{left isomorphism} $\mathcal L \colon \widehat \T^q \to \widehat\T^q$ and  \emph{right isomorphism} $\mathcal R  \colon \widehat \T^q \to \widehat\T^q$ are  defined by 
\begin{align*}
\mathcal L(X) &= Y^{-1}
&
\mathcal R(X) &= Z^{-1}
\\
\mathcal L(Y) &= (1+qY)  (1+q^3Y) X
&
\mathcal R(Y) &= (1+qZ)  (1+q^3Z) Y
\\
\mathcal L(Z) &= (1+qY^{-1})^{-1}  (1+q^3Y^{-1})^{-1} Z
&
\mathcal R(Z) &= (1+qZ^{-1})^{-1}  (1+q^3Z^{-1})^{-1} X
\end{align*}

For a standard representation $\rho_{x_1y_1z_1} \colon \T^q \to \End(\C^n)$, we want to determine the representations $\rho_{x_1y_1z_1} \circ \mathcal L$ and $\rho_{x_1y_1z_1} \circ \mathcal R$ of $\T^q$. 

\begin{prop}
\label{prop:LeftRight}
Suppose that $y_1^n \neq -1$. 
Then the  representation  $\rho_{x_1y_1z_1} \circ \mathcal L \colon \mathcal T^q \to \End(\C^n)$ makes sense,  and  is  isomorphic to any standard representation $\rho_{x_2y_2z_2}$ with 
\begin{align*}
x_2^{ n} &=  y_1^{-n}
&
y_2^{ n}&=(1+y_1^n)^{2} x_1^n \\
z_2^{ n} &= (1+y_1^{-n})^{-2} z_1^n
&
x_2y_2z_2  &= x_1 y_1 z_1.
\end{align*}

Similarly, if $z_1^n \neq -1$,   $\rho_{x_1y_1z_1} \circ \mathcal R $ makes sense and is isomorphic to  any $\rho_{x_3y_3z_3}$ with 
\begin{align*}
x_3^{ n} &= z_1^{-n}
&
y_3^{ n}&= (1+z_1^n)^{2} y_1^n \\
z_3^{ n} &=  (1+z_1^{-n})^{-2}  x_1^n
&
x_3y_3z_3  &= x_1y_1z_1 .
\end{align*}
\end{prop}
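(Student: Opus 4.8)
Since $\rho_{x_1y_1z_1}\circ\mathcal L$ is asserted to be a standard representation, and Proposition~\ref{prop:StandardRep} shows that a standard representation is determined up to isomorphism by the four scalars $a,b,c,h\in\C^*$ with $\rho(X^n)=a\,\Id$, $\rho(Y^n)=b\,\Id$, $\rho(Z^n)=c\,\Id$, $\rho([XYZ])=h\,\Id$ (subject only to $h^n=abc$), the plan is: (i) show that $\rho_{x_1y_1z_1}\circ\mathcal L$ makes sense and is irreducible; (ii) compute its four invariants and match them with the stated equations; (iii) handle $\mathcal R$ by the same argument with the roles of $X,Y,Z$ cyclically rotated. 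Throughout, write $\mu_n$ for the group of $n$-th roots of unity, and recall $\gcd(4,n)=1$ since $n$ is odd.

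\textbf{The three $n$-th power invariants.} These follow from one ``power formula'' in the quantum torus, of the kind packaged by the discrete quantum dilogarithm of \S\ref{subsect:DiscreteQuantumDilog}. Since $\mathcal L(X)=Y^{-1}$ has no denominator, $\mathcal L(X^n)=Y^{-n}$, giving $x_2^n=y_1^{-n}$. For $\mathcal L(Y)=A(Y)X$ with $A(Y)=(1+qY)(1+q^3Y)$, I would expand $\mathcal L(Y)^n=(A(Y)X)^n=\bigl(\prod_{k=0}^{n-1}X^kA(Y)X^{-k}\bigr)X^n$ and use $XYX^{-1}=q^4Y$ to get $X^kA(Y)X^{-k}=A(q^{4k}Y)$; because $k\mapsto 1+4k$ and $k\mapsto 3+4k$ are bijections of $\Z/n$, and $\prod_{\zeta\in\mu_n}(1+\zeta Y)=1+Y^n$ (recall $n$ odd), this yields $\mathcal L(Y)^n=(1+Y^n)^2X^n$, hence $y_2^n=(1+y_1^n)^2x_1^n$. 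The identical manipulation applied to $\mathcal L(Z)=B(Y)Z$ with $B(Y)=(1+qY^{-1})^{-1}(1+q^3Y^{-1})^{-1}$, now using $ZYZ^{-1}=q^{-4}Y$, gives $\mathcal L(Z)^n=(1+Y^{-n})^{-2}Z^n$ and thus $z_2^n=(1+y_1^{-n})^{-2}z_1^n$. Applying $\rho_{x_1y_1z_1}$ reads off all three invariants as claimed.

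\textbf{The central invariant.} The equality $x_2y_2z_2=x_1y_1z_1$ is pinned down by the identity $\mathcal L(XYZ)=XYZ$ (so $\mathcal L([XYZ])=[XYZ]$), which I would verify by a direct monomial computation: in $\mathcal L(X)\mathcal L(Y)\mathcal L(Z)=Y^{-1}(1+qY)(1+q^3Y)\,X\,(1+qY^{-1})^{-1}(1+q^3Y^{-1})^{-1}Z$, commute $X$ rightward past the function of $Y^{-1}$ (replacing $Y^{-1}$ by $q^{-4}Y^{-1}$), then observe the cancellation $(1+q^{-3}Y^{-1})(1+q^{-1}Y^{-1})=q^{-4}Y^{-2}(1+qY)(1+q^3Y)$, so that the two dilogarithm factors cancel against $(1+qY)(1+q^3Y)$ and what survives is $q^4\,YXZ=q^4q^{-4}\,XYZ=XYZ$. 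Hence $(\rho_{x_1y_1z_1}\circ\mathcal L)([XYZ])=\rho_{x_1y_1z_1}([XYZ])=x_1y_1z_1\,\Id$, giving $x_2y_2z_2=x_1y_1z_1$; as a consistency check the four invariants satisfy $h^n=abc$.

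\textbf{Making sense, irreducibility, and the $\mathcal R$ case.} The only denominators in $\mathcal L$ sit in $\mathcal L(Z)$, so it suffices that $\rho_{x_1y_1z_1}$ sends $1+qY^{-1}$ and $1+q^3Y^{-1}$ to invertible operators (the full left/right-fraction condition of \S\ref{subsect:CheFock} then follows, or can be quoted from \cite{BonLiu}). Here $\rho_{x_1y_1z_1}(Y^{-1})$ is a weighted cyclic shift with characteristic polynomial $t^n-y_1^{-n}$, so its eigenvalues are the $n$-th roots of $y_1^{-n}$ and $\det\rho_{x_1y_1z_1}(1+q^jY^{-1})=\prod_{\eta\in\mu_n}(1+y_1^{-1}\eta)=1+y_1^{-n}$, which is nonzero precisely under the hypothesis $y_1^n\neq-1$. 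Once $\rho_{x_1y_1z_1}(Y)$ and these two operators are invertible, the image of $\rho_{x_1y_1z_1}\circ\mathcal L$ contains $\rho_{x_1y_1z_1}(X),\rho_{x_1y_1z_1}(Y),\rho_{x_1y_1z_1}(Z)$, hence all of $\End(\C^n)$, so it is irreducible and therefore isomorphic to the standard representation with the computed invariants. The statement for $\mathcal R$ is proved verbatim, with $Z$ playing the role of the pivot variable $Y$ and the hypothesis $z_1^n\neq-1$ entering where $y_1^n\neq-1$ did. The main point requiring care is the $q$-power bookkeeping in the identity $\mathcal L(XYZ)=XYZ$, together with the (easy but essential) dilogarithm cancellation: this is exactly what upgrades the equality of $n$-th powers to the \emph{exact} equality $x_2y_2z_2=x_1y_1z_1$. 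Everything else is routine linear algebra in the quantum torus.
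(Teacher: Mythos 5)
Your proof is correct. The paper does not verify Proposition~\ref{prop:LeftRight} directly: it defers to \cite[Lemma~27]{BonLiu}, the general compatibility of representations with Chekhov--Fock coordinate changes (already recalled as Proposition~\ref{prop:CheFockRepsCompatibleIdealTriangChange}), specialized to the one-puncture torus. Your argument is a genuinely different and entirely self-contained route: a direct computation in the quantum torus $\T^q$. The three $n$-th power invariants follow from the telescoping identity $(A(Y)X)^n = \bigl(\prod_{k=0}^{n-1} X^k A(Y) X^{-k}\bigr) X^n$, the bijectivity of $k\mapsto 4k+j$ on $\Z/n$ (using $\gcd(4,n)=1$), and the cyclotomic evaluation $\prod_{\zeta\in\mu_n}(1+\zeta Y)=1+Y^n$, which is valid for $n$ odd; the fourth invariant follows from the exact identity $\mathcal L(XYZ)=XYZ$ driven by the cancellation $(1+q^{-3}Y^{-1})(1+q^{-1}Y^{-1}) = q^{-4}Y^{-2}(1+qY)(1+q^3Y)$; and the ``makes sense'' and irreducibility checks are handled correctly, since $\det\rho_{x_1y_1z_1}(1+q^jY^{-1}) = 1+y_1^{-n}\neq 0$ under the hypothesis, and the image of $\rho_{x_1y_1z_1}\circ\mathcal L$ generates the operators $\rho_{x_1y_1z_1}(X),\rho_{x_1y_1z_1}(Y),\rho_{x_1y_1z_1}(Z)$, hence all of $\End(\C^n)$. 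You rightly note that irreducibility must be established before invoking Proposition~\ref{prop:StandardRep}(2)--(3). What your approach buys is explicitness and independence from the machinery of \cite{BonLiu}; what the paper's citation buys is brevity and immediate generalization to arbitrary surfaces as in \S\ref{sect:ComputeIntertwinerGeneralSurf}.
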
 

\begin{proof}
This computation of the invariants of $\rho_{x_1y_1z_1} \circ \mathcal L$ and $\rho_{x_1y_1z_1} \circ \mathcal R$ can be found in \cite[Lemma~27]{BonLiu}. We already encountered this property in Proposition~\ref{prop:CheFockRepsCompatibleIdealTriangChange}. 
\end{proof}

We now explicitly determine the isomorphisms whose existence is abstractly predicted by Proposition~\ref{prop:LeftRight}. 

We first define, for $u$, $v\in \C$ with $v^n = 1 + u^n \neq 0$,  the quantity
\begin{align*}
D^q(u) &= \prod_{i=1}^n \QDL^q(u, v \vbar 2i) =  \prod_{j=1}^n \QDL^q(u, v \vbar  j)\\ 
&= v^{- \frac12 n(n+1)} \prod_{k=1}^{n} (1+ u q^{-2k})^{n-k+1} = (1+u^n)^{- \frac{n+1}2} \prod_{k=1}^{n} (1+ u q^{-2k})^{n-k+1}
\end{align*}
which depends only on $u$, $q$ and $n$. 

The \emph{left} and \emph{right intertwiners} are the linear isomorphisms $L_{uv}$ and $R_{uv} \colon \C^n \to \C^n$, depending on $u$, $v\in \C$ with $v^n = 1 + u^n \neq 0$, defined by 
$$
L_{uv} (w_j) = \frac {\QDL^q(u,v\vbar 2j) } {\left| D^q(u) \right|^{\frac1n}  \sqrt n }\sum_{i=1}^n { q^{-i^2 +j^2+4ij+i-j} }  w_i
$$
and
$$
R_{uv} (w_j) = \frac {\QDL^q(u,v \vbar 2j) } {\left| D^q(u) \right|^{\frac1n}  \sqrt n }\sum_{i=1}^n { q^{i^2 +3j^2-4ij+i-j} }  w_i
$$
for the standard basis $\{ w_1, w_2, \dots, w_n\}$ of $\C^n$. 

The normalization by the factor $\left| D^q(u) \right|^{\frac1n}  \sqrt n $ was introduced to achieve the following property. 

\begin{lem}
\label{lem:LeftRightIntertwinerDeterminant}
The determinants $\det L_{uv}$ and $\det R_{uv}$ of the above linear isomorphisms have modulus $1$. 
\end{lem}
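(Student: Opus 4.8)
The plan is to compute $|\det L_{uv}|$ and $|\det R_{uv}|$ directly from the defining formulas, by factoring each matrix as a product of three matrices whose determinants are individually controlled. Writing $L_{uv}$ in the standard basis, its $(i,j)$-entry is
$$
(L_{uv})_{ij} = \frac{\QDL^q(u,v\vbar 2j)}{|D^q(u)|^{\frac1n}\sqrt n}\, q^{-i^2+j^2+4ij+i-j}.
$$
The key observation is that this factors as $(L_{uv})_{ij} = \alpha_i\, M_{ij}\, \beta_j$, where $\alpha_i = q^{-i^2+i}$ depends only on the row index $i$, where $\beta_j = |D^q(u)|^{-\frac1n} n^{-\frac12}\,\QDL^q(u,v\vbar 2j)\, q^{j^2-j}$ depends only on the column index $j$, and where $M_{ij} = q^{4ij}$ is a Vandermonde-type matrix in the $n$-th root of unity $q^4$. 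Thus $L_{uv} = D_\alpha\, M\, D_\beta$ with $D_\alpha$, $D_\beta$ diagonal, and $\det L_{uv} = (\det D_\alpha)(\det M)(\det D_\beta)$.

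The first step is to handle the two diagonal factors. The factor $\det D_\alpha = \prod_{i=1}^n q^{-i^2+i}$ is a root of unity, hence has modulus $1$. The factor $\det D_\beta = |D^q(u)|^{-1} n^{-n/2}\Big(\prod_{j=1}^n q^{j^2-j}\Big)\prod_{j=1}^n \QDL^q(u,v\vbar 2j)$, and here the point of the normalization enters: by the definition $D^q(u) = \prod_{j=1}^n \QDL^q(u,v\vbar 2j)$, the product $\prod_j \QDL^q(u,v\vbar 2j)$ equals $D^q(u)$ exactly, so $\big|\prod_j\QDL^q(u,v\vbar 2j)\big| = |D^q(u)|$ and the modulus of $\det D_\beta$ collapses to $n^{-n/2}$ times a root of unity. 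The second step is to compute $|\det M|$ for $M = (q^{4ij})_{i,j=1}^n$. Since $q$ is a primitive $n$-th root of unity and $n$ is odd, $q^4$ is also a primitive $n$-th root of unity, so $M$ is (a rearrangement of) the $n\times n$ discrete Fourier matrix; one shows $M\overline M^{\mathsf T} = n\,I$ by the standard orthogonality-of-characters computation $\sum_{k=1}^n q^{4k(i-j)} = n\,\delta_{ij}$. Hence $|\det M|^2 = |\det(M\overline M^{\mathsf T})| = n^n$, i.e. $|\det M| = n^{n/2}$. Combining, $|\det L_{uv}| = 1\cdot n^{n/2}\cdot n^{-n/2} = 1$, which is exactly the assertion. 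The computation for $R_{uv}$ is identical in structure: its $(i,j)$-entry factors as $\alpha'_i M'_{ij}\beta'_j$ with $\alpha'_i = q^{i^2+i}$, $\beta'_j = |D^q(u)|^{-1/n}n^{-1/2}\QDL^q(u,v\vbar 2j)q^{3j^2-j}$, and $M'_{ij} = q^{-4ij}$, again a DFT matrix up to relabeling, so the same bookkeeping gives $|\det R_{uv}| = 1$.

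The main obstacle — really the only nonroutine point — is justifying that $|\det M| = n^{n/2}$ cleanly, i.e. verifying that $q^4$ is primitive of order $n$ (which uses $\gcd(4,n)=1$, true since $n$ is odd) and that the resulting matrix is genuinely unitary up to scale; everything else is collecting roots of unity and invoking the definition of $D^q(u)$. I would be slightly careful about index conventions (indices run modulo $n$, the sums are over a complete residue system, and the "diagonal" entries $\alpha_i$ contain terms like $q^{-i^2}$ that are well-defined mod $n$ only because... in fact $q^{-i^2}$ is \emph{not} periodic mod $n$ in general, but this is harmless: $L_{uv}$ is defined with a fixed choice of representatives $i,j\in\{1,\dots,n\}$, so no periodicity of $\alpha_i$, $\beta_j$ is needed, only periodicity of $\QDL^q$ which is Lemma~\ref{lem:DiscreteQuantumDilogPeriodic}). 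With that caveat noted, the argument is short and self-contained.
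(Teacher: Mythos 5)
Your proof is correct and follows essentially the same route as the paper: factor the matrix as diagonal $\times$ DFT-type $\times$ diagonal, let the normalization by $\left|D^q(u)\right|^{1/n}$ cancel the dilogarithm product, and observe that the DFT-type factor has determinant of modulus $1$. The only cosmetic difference is in the last step — you establish unitarity via $M\overline{M}^{\mathsf T}=nI$, whereas the paper notes $A^4=\mathrm{Id}$ so $\det A$ is a fourth root of unity; both are standard facts about the same matrix and carry the same weight.
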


\begin{proof}
 By definition of $D^q(u)$ and remembering that $\sum_{i=1}^n i^2 = \frac16 n(n+1)(2n+1)$,  
 $$\det R_{uv} = \frac{D^q(u)}{\left| D^q(u) \right|} \,q^{\frac23n(n+1)(2n+1)} \det A$$
 where $A$ is the matrix whose $ij$--entry is $A_{ij}= \frac 1{\sqrt n} q^{-4ij}$. 
 
 The matrix $A$ is a variation of the well-known discrete Fourier transform matrix. In particular, one easily computes that $A^4$ is the identity matrix. It follows that $\det A$ is a 4--root of unity, and in particular has modulus 1.  This proves that $\left| \det R_{uv} \right|=1$. 
 
A similar argument shows that $\left| \det L_{uv} \right|=1$. 
\end{proof}

We now have the following explicit version of Proposition~\ref{prop:LeftRight}. 

\begin{lem}
\label{lem:LeftRightIntertwiners}
Let  $x_1$, $y_1$ and $z_1 \in \C^*$ be given.
\begin{enumerate}
\item If $y_1^n \neq -1$, choose $u_2$,  $v_2\in \C$ such that $u_2=q y_1$ and $v_2^n = 1 + u_2^n$, and set 
\begin{align*}
 x_2 &=y_1^{-1}
&
y_2 &= v_2^2 x_1
&
 z_2 &= v_2^{-2} y_1^2 z_1.
\end{align*}
Then, for every $W\in \T^q$,
$$
( \rho_{x_1 y_1 z_1} \circ \mathcal L) (W)  = L_{u_2v_2} \circ  \rho_{x_2 y_2 z_2} (W)\circ  L_{u_2v_2}^{-1} .
$$

\item If $z_1^n \neq -1$, choose $u_3$,  $v_3\in \C$ such that $u_3= qz_1$ and $v_3^n = 1 + u_3^n$, and set 
\begin{align*}
x_3&=z_1^{-1} 
&
 y_3 &= v_3^2 y_1
 &
 z_3 &= v_3^{-2} z_1^2 x_1.
\end{align*}
Then, for every $W\in \T^q$, 
$$
( \rho_{x_1 y_1 z_1} \circ \mathcal R) (W)  = R_{u_3v_3} \circ  \rho_{x_3 y_3 z_3} (W)\circ  R_{u_3v_3}^{-1} .
$$
\end{enumerate}

\end{lem}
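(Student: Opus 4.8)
The plan is to verify the two intertwining identities directly on the three generators $X$, $Y$, $Z$ of $\T^q$. This suffices, since both sides of each asserted identity are algebra homomorphisms $\T^q\to\End(\C^n)$: the right-hand side $W\mapsto L_{u_2v_2}\circ\rho_{x_2y_2z_2}(W)\circ L_{u_2v_2}^{-1}$ is a conjugate of a representation, hence a representation, while the left-hand side $W\mapsto(\rho_{x_1y_1z_1}\circ\mathcal L)(W)$ makes sense and is an algebra homomorphism by Proposition~\ref{prop:LeftRight}. Observe also that the scalar normalization $1/(|D^q(u_2)|^{1/n}\sqrt n)$ appearing in the definition of $L_{u_2v_2}$ is irrelevant here, since rescaling $L_{u_2v_2}$ by a nonzero scalar does not change the conjugation $L_{u_2v_2}\circ(-)\circ L_{u_2v_2}^{-1}$; that normalization serves only to yield the determinant statement of Lemma~\ref{lem:LeftRightIntertwinerDeterminant}. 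Finally, Proposition~\ref{prop:LeftRight} together with the irreducibility of $\rho_{x_1y_1z_1}\circ\mathcal L$ (Proposition~\ref{prop:StandardRep}(1)) and Schur's lemma already guarantees \emph{a priori} that an intertwiner exists and is unique up to scalar; so the genuine content of the statement is that the \emph{explicitly defined} $L_{u_2v_2}$, with the stated $x_2$, $y_2$, $z_2$, is one of these intertwiners.

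First I would dispose of the preliminaries. The hypothesis $y_1^n\neq-1$ gives $v_2^n=1+u_2^n=1+q^ny_1^n=1+y_1^n\neq0$ (using $q^n=1$, as $q$ is a primitive $n$--root of unity with $n$ odd), so $\QDL^q(u_2,v_2\vbar\,\cdot\,)$, $D^q(u_2)$, and hence $L_{u_2v_2}$, are all well defined, and $\rho_{x_1y_1z_1}\circ\mathcal L$ makes sense by Proposition~\ref{prop:LeftRight}. A short computation, again using $q^n=1$ and $v_2^n=1+y_1^n$, checks that $x_2=y_1^{-1}$, $y_2=v_2^2x_1$, $z_2=v_2^{-2}y_1^2z_1$ satisfy the four relations of Proposition~\ref{prop:LeftRight}: indeed $x_2^n=y_1^{-n}$, $y_2^n=v_2^{2n}x_1^n=(1+y_1^n)^2x_1^n$, $z_2^n=v_2^{-2n}y_1^{2n}z_1^n=(1+y_1^{-n})^{-2}z_1^n$, and $x_2y_2z_2=y_1^{-1}x_1y_1^2z_1=x_1y_1z_1$. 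Thus $\rho_{x_2y_2z_2}$ is exactly a standard representation isomorphic to $\rho_{x_1y_1z_1}\circ\mathcal L$, and it remains only to check that $L_{u_2v_2}$ realizes this isomorphism.

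The heart of the argument is then a bookkeeping computation with the explicit matrix entries. Write $L_{u_2v_2}(w_j)=\sum_{i=1}^n c_j\,q^{-i^2+j^2+4ij+i-j}\,w_i$ with $c_j$ the scalar $\QDL^q(u_2,v_2\vbar2j)/(|D^q(u_2)|^{1/n}\sqrt n)$, and recall $\rho_{xyz}(X)w_i=xq^{4i}w_i$, $\rho_{xyz}(Y)w_i=yq^{-2i}w_{i+1}$, $\rho_{xyz}(Z)w_i=zq^{-2i}w_{i-1}$. For $W=X$ one has $\mathcal L(X)=Y^{-1}$, so $(\rho_{x_1y_1z_1}\circ\mathcal L)(X)w_i=y_1^{-1}q^{2i-2}w_{i-1}$; applying this to $L_{u_2v_2}(w_j)$, shifting the summation index by one, and comparing the resulting quadratic $q$--exponents with those of $L_{u_2v_2}(\rho_{x_2y_2z_2}(X)w_j)=x_2q^{4j}L_{u_2v_2}(w_j)$, the identity collapses to the single scalar equation $x_2=y_1^{-1}$, which holds by construction. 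For $W=Y$ we use $\mathcal L(Y)=(1+qY)(1+q^3Y)X$: applying $\rho_{x_1y_1z_1}$ of this element to $L_{u_2v_2}(w_j)$ shifts indices and produces, in front of each basis vector, $c_j$ times two linear factors of the form $1+(\text{power of }q)\,y_1$. The key algebraic input is the recursion $\QDL^q(u,v\vbar i)=v^{-1}(1+uq^{-2i})\,\QDL^q(u,v\vbar i-1)$, read directly off the definition of $\QDL^q$; applying it twice with $u=u_2=qy_1$ converts $\QDL^q(u_2,v_2\vbar2j)$ times those two factors into a power of $v_2$ times $\QDL^q(u_2,v_2\vbar\,\cdot\,)$ at the shifted index, which is precisely the coefficient needed to match $L_{u_2v_2}(\rho_{x_2y_2z_2}(Y)w_j)=y_2q^{-2j}L_{u_2v_2}(w_{j+1})$, given $y_2=v_2^2x_1$; the leftover powers of $q$ cancel after reindexing. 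The case $W=Z$ is similar, with $\mathcal L(Z)=(1+qY^{-1})^{-1}(1+q^3Y^{-1})^{-1}Z$, so that the \emph{inverse} of the $\QDL^q$--recursion produces the factor $v_2^{-2}$ matching $z_2=v_2^{-2}y_1^2z_1$ (well-definedness of these inverse factors under $\rho_{x_1y_1z_1}$ being again ensured by $y_1^n\neq-1$). Part~(2) is entirely parallel, with $\mathcal R$, $R_{u_3v_3}$, the parameters $u_3=qz_1$, $x_3=z_1^{-1}$, $y_3=v_3^2y_1$, $z_3=v_3^{-2}z_1^2x_1$, and the exponent $q^{i^2+3j^2-4ij+i-j}$ replacing their left-hand counterparts.

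The main obstacle is purely computational: one must track the quadratic-in-$q$ exponents (of the type $-i^2+j^2+4ij$) through the index shifts $i\mapsto i\pm1$ and $j\mapsto j\pm1$ and match them against the linear exponents $q^{4i}$, $q^{-2i}$ coming from the standard representations, where a single misplaced term would spoil the identity. There is nothing conceptually delicate once the $\QDL^q$--recursion and the reduction to generators are in hand; the Schur-lemma observation above is only a consistency check on the final answer and is not logically needed for the proof.
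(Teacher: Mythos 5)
Your proposal is correct and takes essentially the same route the paper intends: reduce to the three generators $X$, $Y$, $Z$, invoke the recursion $\QDL^q(u,v\vbar i)=v^{-1}(1+uq^{-2i})\QDL^q(u,v\vbar i-1)$ for the dilogarithm factor, and track the quadratic $q$--exponents through the index shifts. The paper's proof is one line declaring this a tedious but elementary computation (with $L_{uv}$, $R_{uv}$ designed precisely so it works out), and your outline, including the preliminary verification that $(x_2,y_2,z_2)$ match the invariants of Proposition~\ref{prop:LeftRight} and the exact collapse of the $X$ case to $x_2=y_1^{-1}$, fills in that computation faithfully.
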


\begin{proof}
 This a tedious, but elementary, computation. In fact,  $L_{uv}$ and $R_{uv}$ were  precisely designed to realize the isomorphisms of Proposition~\ref{prop:LeftRight}. 
\end{proof}

 \subsection{The twist intertwiner}
\label{subsect:TwistIntertwiner}

If $x_1^n=x_2^n$, $y_1^n=y_2^n$, $z_1^n=z_2^n$ and  $x_1y_1z_1=x_2y_2z_2$, Proposition~\ref{prop:StandardRep} asserts that the standard representations $\rho_{x_1y_1z_1}$ and $\rho_{x_2y_2z_2}$ are isomorphic.   We determine the corresponding isomorphism $\C^n \to \C^n$. 

The formulas are a little simpler if we use the $n$--root of unity $q^4$, which is primitive since $n$ is odd. The property that  $x_1^n=x_2^n$, $y_1^n=y_2^n$, $z_1^n=z_2^n$ and  $x_1y_1z_1=x_2y_2z_2$  is then equivalent to the existence of $l_0$, $m_0$, $n_0\in \Z$  such that $x_2 = q^{4l_0} x_1$, $y_2 =q^{4m_0} y_1$, $z_2=q^{4n_0} z_1$ and  $l_0+m_0+n_0=0 \mod n$. 

Define $T_{l_0m_0n_0} \colon \C^n \to \C^n$ by the property that, for the standard basis $\{ w_1, w_2, \dots, w_n\}$ of~$\C^n$,
$$
T_{l_0m_0n_0}(w_j) =  q^{2j(n_0 -m_0)} w_{j+l_0} = \sum_{i=1}^n q^{2j(n_0-m_0)} \delta_{i, j+l_0} w_i 
$$
for the Kronecker symbol $\delta_{i,j} \in \{0,1\}$. 

\begin{lem}
\label{lem:TwistIntertwiner}
If $x_2 = q^{4l_0} x_1$, $y_2=q^{4m_0} y_1$ and $z_2=q^{4n_0} z_1$ for some $l_0$, $m_0$, $n_0\in \Z$ with $l_0 + m_0 + n_0=0$, then
$$
\rho_{x_1y_1z_1}(W) =T_{l_0m_0n_0} \circ  \rho_{x_2y_2z_2} (W) \circ T_{l_0m_0n_0}^{-1}.
$$
for every $W\in \T^q$. In addition, $\det T_{l_0m_0n_0}  =  1$. 
\end{lem}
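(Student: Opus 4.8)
The plan is to verify the intertwining relation generator by generator, and then compute the determinant directly. First I would set up notation: with $x_2 = q^{4l_0}x_1$, $y_2 = q^{4m_0}y_1$, $z_2 = q^{4n_0}z_1$ and $l_0+m_0+n_0 \equiv 0 \pmod n$, I want to check that $T_{l_0m_0n_0}\circ \rho_{x_2y_2z_2}(W) = \rho_{x_1y_1z_1}(W)\circ T_{l_0m_0n_0}$ for $W\in\{X,Y,Z\}$; since these generate $\T^q$, this suffices. Write $T = T_{l_0m_0n_0}$, so $T(w_j) = q^{2j(n_0-m_0)}w_{j+l_0}$.

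The core computation is then three essentially identical checks. For $X$: on one side, $\rho_{x_2y_2z_2}(X)(w_j) = x_2 q^{4j}w_j$, so $T\circ\rho_{x_2y_2z_2}(X)(w_j) = x_2 q^{4j}q^{2j(n_0-m_0)}w_{j+l_0}$; on the other side, $\rho_{x_1y_1z_1}(X)\circ T(w_j) = q^{2j(n_0-m_0)}\rho_{x_1y_1z_1}(X)(w_{j+l_0}) = q^{2j(n_0-m_0)}x_1 q^{4(j+l_0)}w_{j+l_0}$, and these agree precisely because $x_2 = q^{4l_0}x_1$. For $Y$: $\rho_{x_2y_2z_2}(Y)(w_j) = y_2 q^{-2j}w_{j+1}$, so the left side gives $y_2 q^{-2j}q^{2(j+1)(n_0-m_0)}w_{j+1+l_0}$, while the right side gives $q^{2j(n_0-m_0)}y_1 q^{-2(j+l_0)}w_{j+1+l_0}$; matching the scalars requires $y_2 q^{-2j}q^{2(j+1)(n_0-m_0)} = y_1 q^{2j(n_0-m_0)}q^{-2(j+l_0)}$, i.e. $y_2 q^{2(n_0-m_0)} = y_1 q^{-2l_0}$, which follows from $y_2 = q^{4m_0}y_1$ together with $l_0+m_0+n_0\equiv 0$ (so that $-2l_0 = 2m_0+2n_0 \pmod n$, hence $q^{-2l_0} = q^{2m_0}q^{2n_0}$ since $q$ has order $n$; wait, one should be careful here since the exponents are modulo $n$ but $q$ has order $n$, so this is legitimate). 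Indeed $y_2 q^{2(n_0-m_0)} = q^{4m_0}q^{2n_0-2m_0}y_1 = q^{2m_0+2n_0}y_1 = q^{-2l_0}y_1$ as needed. The check for $Z$ is the mirror image, using $\rho(Z)(w_j) = z q^{-2j}w_{j-1}$ and $z_2 = q^{4n_0}z_1$, and reduces to the same congruence $l_0+m_0+n_0\equiv 0\pmod n$.

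For the determinant claim, observe that $T$ is the composition of two commuting-up-to-scalar operators: the cyclic shift $S\colon w_j \mapsto w_{j+l_0}$ (indices mod $n$) and the diagonal operator $D\colon w_j \mapsto q^{2j(n_0-m_0)}w_j$. One has $\det S = \pm 1$ (it is a permutation matrix, a power of the $n$-cycle, so its determinant is $(-1)^{(n-1)\cdot(\text{something})}$; since $n$ is odd the $n$-cycle is an even permutation and $\det S = 1$), and $\det D = q^{2(n_0-m_0)\sum_{j=1}^n j} = q^{2(n_0-m_0)\cdot \frac{n(n+1)}2} = q^{(n_0-m_0)n(n+1)}$, which is $1$ since $q^n = 1$. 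Hence $\det T = \det D \cdot \det S = 1$. I would spell out the $\det S = 1$ point: a full $n$-cycle is a product of $n-1$ transpositions, and $n-1$ is even, so the cycle is even; any power of it is therefore even as well, giving $\det S = 1$.

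The main obstacle, such as it is, is purely bookkeeping: keeping the index shifts and the $q$-exponents straight, and making sure the congruence $l_0+m_0+n_0\equiv 0\pmod n$ is invoked at exactly the right moment (it is what makes the $Y$ and $Z$ checks close, and it is also what guarantees consistency of the three conditions $x_2=q^{4l_0}x_1$ etc. with $x_1y_1z_1 = x_2y_2z_2$). There is no conceptual difficulty; the statement is the explicit incarnation of the uniqueness-up-to-scalar in Proposition~\ref{prop:StandardRep}(3), with the scalar pinned down to $1$ by the particular choice of $T_{l_0m_0n_0}$. I would present the three generator checks compactly (perhaps only $X$ and $Y$ in full, with $Z$ noted as symmetric) and then the two-line determinant computation.
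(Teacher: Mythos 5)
Your proof is correct and is exactly the elementary computation the paper alludes to with "this is again an elementary computation": verify the intertwining relation on the three generators $X$, $Y$, $Z$, and compute $\det T_{l_0m_0n_0}$ by factoring it as a diagonal operator times a cyclic shift (the diagonal contributes $q^{(n_0-m_0)n(n+1)}=1$, the shift is an even permutation since $n$ is odd). There is no genuine difference of approach to report.
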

\begin{proof}
 This is again an elementary computation. 
\end{proof}

\subsection{The intertwiner $\Lambda_{\phi, r}^q$ as a product of elementary intertwiners}
\label{subsect:IntertwinerCompElementaryIntertwiner} 

We  begin our computation of the intertwiner $ \Lambda_{\phi, r}^q \colon V \to V$ of Proposition~\ref{prop:KauffmanIntertwinerDefined}. Because of Theorem~\ref{thm:CheFockAndKauffmanIntertwiners}, we will actually compute the intertwiner $\bar \Lambda_{\phi, \bar r}^q \colon\bar V \to \bar V$ of Proposition~\ref{prop:QuantumTeichmullerIntertwinerDefined}. 

Our data consists of a $\PSL$--character $[\bar r ] \in \XP(S_{1,1})$ that is associated to an edge weight system $a= a^{(0)}$, $ a^{(1)}$, \dots, $ a^{(k_0-1)}$, $ a^{(k_0)}= a\in \left(\C^* \right)^e$ for an ideal triangulation sweep $\tau = \tau^{(0)}$, $ \tau^{(1)}$, \dots, $ \tau^{(k_0-1)}$, $ \tau^{(k_0)}=\phi(\tau)$. In particular, $[\bar r]$ is $\phi$--invariant. We are also given a puncture weight $h_v$, which is an  $n$--root of the square of the product of the three edge weights of $a^{(k)}$ and does not depend on $k$. 

We first take advantage of the small size of the surface $S_{1,1}$ to simplify the data, as well as the notation.

\begin{lem}
\label{lem:OrganizePhi}
 We can choose the isomorphism $H_1(S_{1,1}) \cong \Z^2$ (and the corresponding identification $\pi_0\, \Diff^+(S_{1,1}) \cong \mathrm{SL}_2(\Z)$) in such a way that:
 \begin{enumerate}
 \item The diffeomorphism $\phi$ can be written as a composition
  $$
 \phi = \phi_1\circ \phi_2 \circ \dots \circ  \phi_{k_0-1} \circ \phi_{k_0}\circ  J^\epsilon
 $$ 
 where each $\phi_k$ is one of the diffeomorphisms $L= \left( 
\begin{smallmatrix}
1&1\\0&1
\end{smallmatrix}
\right)$,  $R= \left( 
\begin{smallmatrix}
1&0\\1&1
\end{smallmatrix}
\right)$ of  \S {\upshape\ref{subsect:ChekhovFockPunctTorus}}, where $J= \left( 
\begin{smallmatrix}
-1&0\\0&-1
\end{smallmatrix}
\right)$, and where $\epsilon \in \{0,1\}$. 

\item Let $\tau_0$ be the ideal triangulation whose edges $e_0$, $f_0$, $g_0$ are respectively disjoint from closed curves representing the homology classes $(1,1)$, $(0,1)$, $(1,0 )$ in $H_1(S_{1,1}) \cong \Z^2$, and define an ideal triangulation $
\tau_k = \phi_1\circ \phi_2\circ \dots\circ \phi_k(\tau_0)
$  for each $k=1$, $2$, \dots, $k_0$. Then, the character $[\bar r ] \in \XP(S)$ is associated to an edge weight system $(a_k, b_k, c_k)$ for the edges $e_k$, $f_k$, $g_k$ of $\tau_k$ that satisfies the induction relation that
\begin{align*}
\qquad\qquad a_k &= b_{k-1}^{-1}&
 b_k&=  (1+b_{k-1})^2 a_{k-1}&
c_k &=  (1+b_{k-1})^{-2} b_{k-1}^2 c_{k-1}
\end{align*}
if $\phi_k=L$, and 
\begin{align*}
\qquad\qquad a_k &= c_{k-1}^{-1}&
 b_k&= (1+c_{k-1})^2 b_{k-1}&
c_k &= (1+c_{k-1})^{-2} c_{k-1}^2 a_{k-1}
\end{align*}
if $\phi_k=R$.

\item The first and last edge weight systems $ (a_0, b_0, c_0)$ and $(a_{k_0}, b_{k_0}, c_{k_0}) $ are equal. 
\end{enumerate}
\end{lem}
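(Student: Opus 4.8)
The plan is to treat the three parts in turn; part~(1) is the standard structure theory of $\pi_0\,\mathrm{Diff}^+(S_{1,1})\cong\mathrm{SL}_2(\Z)$, part~(2) is the combinatorial core, and part~(3) is a specialization of Lemma~\ref{lem:PeriodicEdgeWeightSystemsInvariantChar}. For part~(1): a mapping class of $S_{1,1}$ is determined by its action on $H_1(S_{1,1})$, so choosing an isomorphism $H_1(S_{1,1})\cong\Z^2$ is the same as choosing an isomorphism $\pi_0\,\mathrm{Diff}^+(S_{1,1})\cong\mathrm{SL}_2(\Z)$, and changing it amounts to conjugating the matrix $M_\phi$ of $\phi$ in $\mathrm{GL}_2(\Z)$. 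The pseudo-Anosov mapping classes of $S_{1,1}$ are exactly those acting hyperbolically on homology, so $M_\phi$ is hyperbolic, and the classical description of the axis of a hyperbolic element of $\mathrm{SL}_2(\Z)$ in the Farey tessellation (equivalently, periodic continued fractions) shows that such a matrix is $\mathrm{GL}_2(\Z)$--conjugate to $\pm W$ for a nonempty positive word $W=W_1W_2\cdots W_{k_0}$ in $L$ and $R$, with both letters occurring because $\phi$ is pseudo-Anosov and not merely reducible. Since $J=-I$ is central we may rewrite $\pm W=W\,J^{\epsilon}$ with $\epsilon\in\{0,1\}$, and letting $\phi_k$ be the mapping class of $W_k\in\{L,R\}$ yields the stated decomposition.

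For part~(2), I would use the classical dictionary between isotopy classes of ideal triangulations of $S_{1,1}$ and triangles of the Farey tessellation, under which the edge disjoint from the simple closed curve of class $(a,b)$ corresponds to the Farey vertex $a/b$; thus $\tau_0$ is the Farey triangle $\{(1,1),(0,1),(1,0)\}$, with $e_0,f_0,g_0$ its three edges. First I would compute the effect of $L$ and $R$ on $\tau_0$: acting on $H_1\cong\Z^2$ by its matrix, $L$ carries the edge classes $\{(1,1),(0,1),(1,0)\}$ to $\{(2,1),(1,1),(1,0)\}$, so $\tau_1=L(\tau_0)$ keeps the edges $e_0$ and $g_0$, replaces $f_0$ by a new edge of class $(2,1)$, and this new edge is $\phi_1(e_0)=e_1$ while $\phi_1(f_0)=f_1$ is the old $e_0$ and $\phi_1(g_0)=g_1$ is the old $g_0$; hence $\tau_1$ arises from $\tau_0$ by a diagonal exchange at $f_0$ followed by an edge reindexing. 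Symmetrically $R$ gives a diagonal exchange at $g_0$, the new edge of class $(1,2)$ being relabeled $e_1$ and $f_1,g_1$ being the old $f_0,e_0$. Next I would apply the shear-bend coordinate-change formula for a diagonal exchange, in the form appropriate to the one-puncture torus, where the square carrying the exchange has its two pairs of opposite sides identified (see \cite[\S2]{Liu}, \cite[\S8]{BonLiu}, and the analog of~\eqref{eqn:ShearbendEmbeddedDiagEx}): composed with the reindexing above, this is exactly the classical ($q\to1$) specialization of the defining formulas of the isomorphisms $\mathcal L$ and $\mathcal R$ of \S\ref{subsect:LeftRightIntertwiners} (which simultaneously establishes the identification of $\mathcal L,\mathcal R$ with $L,R$), and reading it off gives precisely the stated induction relations between $(a_0,b_0,c_0)$ and $(a_1,b_1,c_1)$ for $\phi_k=L$, respectively $\phi_k=R$. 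Passing to general $k$ is then formal: since $\tau_k=(\phi_1\circ\cdots\circ\phi_{k-1})(\phi_k(\tau_0))$ and $\tau_{k-1}=(\phi_1\circ\cdots\circ\phi_{k-1})(\tau_0)$, and diagonal exchanges, reindexings and shear-bend parameters are all intrinsic, $\tau_k$ is the corresponding diagonal exchange-and-reindexing of $\tau_{k-1}$ at the edge $f_{k-1}$ (if $\phi_k=L$) or $g_{k-1}$ (if $\phi_k=R$), with the same induction relations between $(a_{k-1},b_{k-1},c_{k-1})$ and $(a_k,b_k,c_k)$; one starts from an edge weight system $(a_0,b_0,c_0)$ representing $[\bar r]$ on $\tau_0$, which exists off the proper subvariety where some factor $1+a_i$ vanishes by Lemma~\ref{lem:IdealTriangWeightsAndCharacters} and propagation along a sweep, and all the $(a_k,b_k,c_k)$ then represent this same $[\bar r]$ by \S\ref{subsect:SweepInvariantChar}. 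Finally $\tau_0,\tau_1,\dots,\tau_{k_0}$ is an ideal triangulation sweep, and since $J$ is the hyperelliptic involution, acting trivially on isotopy classes of ideal triangulations together with their edge labelings, we have $J^{\epsilon}(\tau_0)=\tau_0$ and hence $\tau_{k_0}=\phi(\tau_0)$: this is a sweep from $\tau_0$ to $\phi(\tau_0)$, as required.

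For part~(3): we now have a sweep $\tau_0,\dots,\tau_{k_0}=\phi(\tau_0)$ together with edge weight systems $(a_k,b_k,c_k)$ all representing the $\phi$--invariant character $[\bar r]$, and the periodicity $(a_{k_0},b_{k_0},c_{k_0})=(a_0,b_0,c_0)$ follows from the argument proving Lemma~\ref{lem:PeriodicEdgeWeightSystemsInvariantChar} specialized to this sweep. Concretely, $\phi$--invariance gives $A\in\PSL$ with $\bar r\circ\phi_*=A\,\bar r\,A^{-1}$; because $S_{1,1}$ has a single puncture the compatibility condition in that proof is automatic for either choice of fixed point (for $[\bar r]$ in the relevant Zariski-dense open set), so the $\tau_0$--enhancement $\xi$ may be chosen so that $\xi\circ\phi_*=A\circ\xi$, and the cross-ratio computation at $\tau_0$ and at $\tau_{k_0}=\phi(\tau_0)$, exactly as at the end of that proof, forces $a_i^{(k_0)}=a_i^{(0)}$ for every edge. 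This is consistent with the periodic edge weight system supplied with the original data, and could alternatively be deduced from it by the same rigidity.

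The step I expect to be the main obstacle is the middle of part~(2): matching the orientation convention of \S\ref{subsect:ChekhovFockPunctTorus}, that $e,f,g$ appear \emph{clockwise} around both triangles, with the roles of the two pairs $\{i_1,i_3\}$ and $\{i_2,i_4\}$ of identified opposite sides of the square in the one-puncture-torus diagonal-exchange formula, so that the shear-bend transformation comes out to be exactly the classical specialization of $\mathcal L$, respectively $\mathcal R$ --- in particular so that the correct surviving edge receives the factor $(1+a_{i_0})^{2}$ and the other receives $(1+a_{i_0}^{-1})^{-2}$, and so that the reindexing sends the new diagonal to the label $e$. This requires drawing the two-triangle decomposition of $S_{1,1}$ explicitly, and it is where an index or sign slip would silently propagate through everything that follows.
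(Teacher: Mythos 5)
Your proof is correct in spirit but takes a genuinely different route, and the difference is worth recording because the two approaches make different demands on the given data. The paper's proof does \emph{not} invoke the abstract structure theory of $\mathrm{SL}_2(\Z)$ (periodic continued fractions / positive words in $L,R$) at all. It starts from the ideal triangulation sweep $\tau=\tau^{(0)},\dots,\tau^{(k_0)}=\phi(\tau)$ and the periodic edge weight system that were \emph{given} as part of the setup in \S\ref{subsect:IntertwinerCompElementaryIntertwiner}, shortens the sweep so that relabellings occur only between consecutive triangulations, relabels the edges so that $e_k$ is the new diagonal with $e_k,f_k,g_k$ clockwise, defines $e_0=\phi^{-1}(e_{k_0})$ etc., and then \emph{reads off} whether each diagonal exchange is along $f_{k-1}$ (call it $L$) or along $g_{k-1}$ (call it $R$). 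The equation $\phi=\phi_1\circ\cdots\circ\phi_{k_0}\circ J^{\epsilon}$ then falls out from the fact that the stabilizer of an ideal triangulation in the mapping class group is generated by $J$. Your proof instead goes the other way: you produce a positive word in $L,R$ by conjugating the hyperbolic matrix, and then construct a sweep from scratch via the Farey dictionary. What the paper's approach buys is that the periodic edge weight system for the resulting sweep is literally inherited from the given data, so part~(3) is automatic; what yours buys is a cleaner statement of part~(1) that does not depend on a given sweep, but at the price of having to re-derive the edge weight system and its periodicity.

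This brings up the one genuine gap. Your part~(3) appeals to Lemma~\ref{lem:PeriodicEdgeWeightSystemsInvariantChar}, whose conclusion holds only for $[\bar r]$ in a Zariski-open dense subset $U\subset\XP(S)$; nothing in the hypotheses of Lemma~\ref{lem:OrganizePhi} (or the ambient setup) places $[\bar r]$ in $U$. Likewise your part~(2) needs the freshly propagated edge weights $a_k,b_k,c_k$ along the Farey sweep never to hit the excluded value $-1$, which again is a genericity condition not guaranteed a priori. You do flag the escape hatch (``could alternatively be deduced from it by the same rigidity''), and that escape hatch is in fact the paper's entire proof: carry the given periodic edge weight system along the given sweep instead of rebuilding it. As written, though, your argument either needs a genericity hypothesis that is not in the statement, or it needs the transfer from the given sweep/weight-system to the Farey sweep/weight-system spelled out (by splicing in a connecting sweep from $\tau_0$ to the given $\tau$ and its $\phi$-image, and checking the coordinate changes do not degenerate), and the latter essentially collapses to the paper's approach. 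A secondary, more cosmetic, point: your conjugacy-classification argument for part~(1) is specific to hyperbolic elements of $\mathrm{SL}_2(\Z)$, i.e.\ to pseudo-Anosov $\phi$, whereas the paper's sweep-based derivation makes no such assumption (it only needs a nontrivial sweep to exist, and outputs $k_0=0$ otherwise). For the applications in this paper $\phi$ is pseudo-Anosov so this is harmless, but it is a genuine narrowing of scope relative to the statement.
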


\begin{proof} We can simplify the ideal triangulation sweep  $\tau = \tau^{(0)}$, $ \tau^{(1)}$, \dots, $ \tau^{(k_0-1)}$, $ \tau^{(k_0)}=\phi(\tau)$ by arranging that, if $\tau^{(k)}$ differs from $\tau^{(l)}$ by an edge relabelling, then necessarily $l=k$ or $k \pm1$. Indeed, the compatibility condition between edge weight systems corresponds to the case $q=1$ of the Chekhov-Fock coordinate change isomorphisms of \S \ref{subsect:CheFock}, and in particular satisfy the relation  $\Phi^1_{\tau'\tau''} \circ \Phi^1_{\tau''\tau'''} = \Phi^1_{\tau'\tau'''}$. It follows that, if $\tau^{(k)}$ differs from $\tau^{(l)}$ by a permutation of its edges, the edge weight system $a^{(k)}$ and $a^{(l)}$ coincide up to the same permutation. If $|l-k|>1$, we can then shorten the ideal triangulation sweep to progressively eliminate this situation. 

We can similarly arrange that no $\tau^{(k)}$ is equal to an edge relabelling of $\phi(\tau^{(l)})$ for some $l>0$. 

We then temporarily omit the edge labellings in this ideal triangulation sweep, and obtain a sequence of unlabelled ideal triangulations $\tau=\tau_0$, $\tau_1$, \dots, $\tau_{k_0-1}$, $\tau_{k_0}=\phi(\tau)$ such that each $\tau_k$ is obtained from $\tau_{k-1}$ by a diagonal exchange. By construction, each $\tau_k$ corresponds to a single triangulation $\tau^{(l)}$, or to two consecutive $\tau^{(l)}$ and $\tau^{(l+1)}$ differing by an edge relabelling, in the sweep. 

For every $k\geq 1$, we now label the edges of $\tau_k$ as $e_k$, $f_k$, $g_k$ in such a way that  $e_k$ is the edge that is not an edge of $\tau_{k-1}$, namely is the new diagonal is the diagonal exchange from $\tau_{k-1}$ to $\tau_k$; the edges $e_k$, $f_k$ and $g_k$ are then uniquely determined by the property that they occur clockwise in this order around the faces to $\tau_k$. For $k=0$, we set $e_0 = \phi^{-1}(e_{k_0})$, $f_0 = \phi^{-1}(f_{k_0})$, $g_0 = \phi^{-1}(g_{k_0})$. 

By construction, each ideal triangulation $\tau_k$ is equal, up to edge relabelling, to a triangulation $\tau^{(k')}$ of the original ideal triangulation sweep. The edge weight system $a^{(k')}$ of this ideal triangulation sweep then gives an edge weight system $(a_k, b_k, c_k)$ for $\tau_k$, where the weights $a_k$, $b_k$ and $c_k \in \C^*$ are respectively associated to the edges $e_k$, $f_k$, $g_k$ of $\tau_k$. Since $\tau_k$ is obtained from $\tau_{k-1}$ by a diagonal exchange followed with an edge relabelling, each $(a_k, b_k, c_k)$ is obtained from $(a_{k-1}, b_{k-1}, c_{k-1})$ by formulas similar to those we encountered in \S \ref{subsect:SweepInvariantChar}, except that we are now in the case of the one-puncture torus. These precise formulas can, for instance, be found in  \cite[\S 2]{Liu} or \cite[\S 8]{BonLiu}; see also below. 

We now choose the identification $H_1(S_{1,1}) \cong \Z^2$ so that the edges $e_0$, $f_0$, $g_0$ are  respectively disjoint from closed curves representing the homology classes $(1,1)$, $(0,1)$, $(1,0 )$. 

\begin{figure}[htbp]
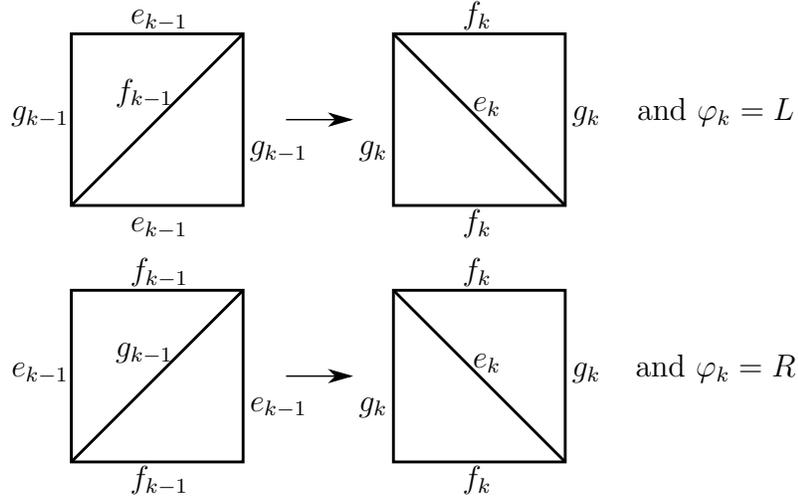

\vskip 10pt
\SetLabels
(.15 * .61) $f_{k-1}$\\
( .18 * 1.05 ) $e_{k-1}$  \\
( .18 * -.14 ) $e_{k-1}$  \\
( -.06 * .5 ) $g_{k-1}$  \\
( .42 * .3 ) $g_{k-1}$  \\
(.84 * .55) $e_k$\\
( .82 * 1.05 ) $f_k$  \\
( .82 * -.14 ) $f_k$  \\
( .61 * .3 ) $g_k$  \\
( 1.04 * .5 ) $g_k$  \\
(1.3*.5) and $\phi_k=L$\\
\endSetLabels
\centerline{\AffixLabels{\includegraphics[width=.4\textwidth]{DiagEx.eps}}\hskip 3cm}
\vskip 30pt
\SetLabels
(.15 * .61) $g_{k-1}$\\
( .18 * 1.05 ) $f_{k-1}$  \\
( .18 * -.14 ) $f_{k-1}$  \\
( -.06 * .5 ) $e_{k-1}$  \\
( .42 * .3 ) $e_{k-1}$  \\
(.84 * .55) $e_k$\\
( .82 * 1.05 ) $f_k$  \\
( .82 * -.14 ) $f_k$  \\
( .61 * .3 ) $g_k$  \\
( 1.04 * .5 ) $g_k$  \\
(1.3*.5) and $\phi_k=R$\\
\endSetLabels
\centerline{\AffixLabels{\includegraphics[width=.4\textwidth]{DiagEx.eps}}\hskip 3cm}
\vskip 10pt

\caption{The two types of diagonal exchanges}
\label{fig:CasesLandR}
\end{figure}

For each $k\geq 1$, the fact that $\tau_k$ is different from $\tau_{k-2}$ (and from $\phi^{-1}(\tau_{k_0-1})$ for $k=1$) shows that $e_k \neq e_{k-1}$. Therefore, there are only two possibilities: 
\begin{enumerate}
 \item  the diagonal exchange occurs along the edge $f_{k-1}$, in which case $f_k = e_{k-1}$ and $g_k = g_{k-1}$ and  the edge weight systems $(a_k, b_k, c_k)$ and $(a_{k-1}, b_{k-1}, c_{k-1})$  are related by the formulas
 \begin{align*}
\qquad\qquad a_k &= b_{k-1}^{-1}&
 b_k&=  (1+b_{k-1})^2 a_{k-1}&
c_k &=  (1+b_{k-1})^{-2} b_{k-1}^2 c_{k-1};
\end{align*}
  in this case we set $\phi_k=L =\left( 
\begin{smallmatrix}
1&1\\0&1
\end{smallmatrix}
\right)$;

 \item or the diagonal exchange occurs along the edge $g_{k-1}$, in which case $f_k = f_{k-1}$, $g_k= e_{k-1}$ and the relations
 \begin{align*}
\qquad\qquad a_k &= c_{k-1}^{-1}&
 b_k&= (1+c_{k-1})^2 b_{k-1}&
c_k &= (1+c_{k-1})^{-2} c_{k-1}^2 a_{k-1}
\end{align*}
hold; in this case we set $\phi_k=R=\left( 
\begin{smallmatrix}
1&0\\1&1
\end{smallmatrix}
\right)$.

\end{enumerate}

By induction, it follows from the construction that $ \tau_k = \phi_1 \circ \phi_2 \circ \dots \circ \phi_k(\tau_0) $ for every $k$. (Note that the order of the $\phi_i$ is not necessarily the one that might have been anticipated.) 

In particular, $\phi(\tau_0) = \tau_{k_0} = \phi_1 \circ \phi_2 \circ \dots \circ \phi_{k_0}(\tau_0)$. Since the stabilizer of $\tau_0$ in the mapping class group $\pi_0 \, \Diff^+(S_{1,1})$ is generated by $J= \left( 
\begin{smallmatrix}
-1&0\\0&-1
\end{smallmatrix}
\right)$, it follows that
$$
\phi =  \phi_1 \circ \phi_2 \circ \dots \circ \phi_{k_0} \circ J^\epsilon
$$
for some $\epsilon\in \{0,1\}$. This concludes the proof. 
\end{proof}

Proposition~\ref{prop:RepsCheFock} associates to the edge weight system $(a_k, b_k, c_k)$ and to a puncture weight $h_v$ with $h_v^n = a_k^2 b_k^2 c_k^2$  an irreducible representation $\rho_k \colon \T_{\tau_k}^q(S) \to \End(V)$. Our computation will require us to be more specific. 

First of all, instead of the puncture invariant $h_v$, it is more convenient to consider its square root $h$ uniquely determined by the properties that
\begin{align*}
 h^2&= h_v && h^n = a_0b_0c_0.
\end{align*}

We begin with an arbitrary triple $(x_0, y_0, z_0)$ with  $x_0^n = a_0$, $y_0^n = b_0$, $z_0^n = c_0$ and $x_0y_0z_0=h$. Then, to  take advantage of Lemma~\ref{lem:LeftRightIntertwiners}, we inductively define $(x_k, y_k, z_k)$ in the  following way.

If $\phi_k=L$, we set $u_k = q y_{k-1}$ and we  pick an arbitrary $v_k \in \C$ such that $v_k^n = 1 + u_k^n $. Then,
\begin{align}
\label{eqn:DefineX_kY_kZ_kLeft}
x_{k} &= y_{k-1}^{-1}
&
y_{k} &=v_k^2  x_{k-1}
&
z_{k} &=v_k^{-2} y_{k-1}^2 z_{k-1}.
\end{align}

If $\phi_k=R$, we set $u_k = q z_{k-1}$ and we again pick an arbitrary $v_k \in \C$ such that $v_k^n = 1 + u_k^n $. Then,
\begin{align}
\label{eqn:DefineX_kY_kZ_kRight}
x_{k} &= z_{k-1}^{-1}
&
y_{k} &=v_k^2 y_{k-1}
&
z_{k} &= v_k^{-2} z_{k-1}^2 x_{k-1}.
\end{align}

Note that $(x_k^n, y_k^n, z_k^n) = (a_k, b_k, c_k)$, and that $ u_k = q x_k^{-1} $ and $v_k^n = 1+a_k^{-1}$ in both cases. Also, the product $x_ky_kz_k$ is independent of $k$, and is consequently equal to the preferred square root $h$ of the puncture weight $h_v$.

Although $(a_{k_0}, b_{k_0}, c_{k_0}) = (a_0, b_0, c_0)$, there is no reason for $(x_{k_0}, y_{k_0}, z_{k_0}) $ to be equal to $ (x_0, y_0, z_0) $. We just know that $(x_{k_0}^n, y_{k_0}^n, z_{k_0}^n) = (x_0^n, y_0^n, z_0^n) $ and $x_{k_0} y_{k_0} z_{k_0} = x_0 y_0 z_0 $. As a consequence, there exists $l_0$, $m_0$, $n_0\in \Z$ with $l_0 + m_0 + n_0 =0$ such that
\begin{align*}
x_{0} &= q^{4l_0} x_{k_0}
&
y_{0} &= q^{4m_0} y_{k_0}
&
z_{0} &= q^{4n_0} z_{k_0}.
\end{align*}

\begin{prop}
\label{prop:IntertwinerCompElementaryIntertwiner}
Under the hypotheses of Theorem~{\upshape\ref{thm:CheFockAndKauffmanIntertwiners}}, the intertwiner $\Lambda_{\phi, r}^q$ of Proposition~{\upshape\ref{prop:KauffmanIntertwinerDefined}} coincides, up to conjugation and rescaling by a factor with modulus $1$, with the composition
$$
\Lambda  =  \Lambda_1\circ    \Lambda_2 \circ\dots   \circ\Lambda_{k_0} \circ T_{l_0m_0n_0}
$$
where
$$
\Lambda_k =
\begin{cases}
L_{u_kv_k} &\text{ if } \phi_k=L\\
R_{u_kv_k} &\text{ if } \phi_k=R
\end{cases}
$$
for the elementary intertwining operators $L_{uv}$, $R_{uv}$ and $T_{l_0m_0n_0} \colon \C^n \to \C^n $ of \S {\upshape\ref{subsect:LeftRightIntertwiners}} and \S{\upshape\ref{subsect:TwistIntertwiner}}, and where the numbers $u_k$, $v_k \in \C^*$ and $k_0$, $l_0$, $m_0\in \Z$ are defined as above.  
\end{prop}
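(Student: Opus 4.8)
The plan is to build the Chekhov–Fock intertwiner $\bar\Lambda_{\phi,\bar r}^q$ as an explicit composition and identify it with the stated product, after which Theorem~\ref{thm:CheFockAndKauffmanIntertwiners} immediately transfers the result to $\Lambda_{\phi,r}^q$. First I would set up the chain of representations along the sweep: starting from the standard representation $\rho_0 = \rho_{x_0y_0z_0}$ of $\T^q_{\tau_0}(S_{1,1})$ (transported via the canonical isomorphism $\theta_{\tau_0}$ of \S\ref{subsect:ChekhovFockPunctTorus}), each diagonal-exchange step $\tau_{k-1}\to\tau_k$ is realized on the algebra level by the Chekhov–Fock coordinate change $\Phi^q_{\tau_{k-1}\tau_k}$, which, in the one-puncture-torus normalization where the new diagonal is labelled $e_k$, is exactly the abstract isomorphism $\mathcal L$ or $\mathcal R$ of \S\ref{subsect:LeftRightIntertwiners} (this is the content of the formulas in Lemma~\ref{lem:OrganizePhi}(2), which match the classical specializations of $\mathcal L,\mathcal R$). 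Lemma~\ref{lem:LeftRightIntertwiners} then provides, at each step, an explicit linear isomorphism $\Lambda_k\in\{L_{u_kv_k},R_{u_kv_k}\}$ conjugating $\rho_{k}$ on $\T^q_{\tau_k}$ to $\rho_{k-1}\circ\Phi^q_{\tau_{k-1}\tau_k}$ on $\T^q_{\tau_{k-1}}$, with the parameters $(x_k,y_k,z_k)$, $u_k=qx_k^{-1}$, $v_k$ defined precisely as in \eqref{eqn:DefineX_kY_kZ_kLeft}--\eqref{eqn:DefineX_kY_kZ_kRight}. Composing these over $k=1,\dots,k_0$ gives an explicit isomorphism $L_{u_1v_1}\circ\cdots\circ L/R_{u_{k_0}v_{k_0}}$ between $\rho_{k_0}$ and $\rho_0\circ\Phi^q_{\tau_0\phi(\tau_0)}$.

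Next I would handle the discrepancy between $\tau_{k_0}$ and $\phi(\tau_0)$ and between $(x_{k_0},y_{k_0},z_{k_0})$ and $(x_0,y_0,z_0)$. Since $\tau_{k_0}=\phi(\tau_0)$ as ideal triangulations with matching edge labels (by construction of the $e_k,f_k,g_k$ and the choice $e_0=\phi^{-1}(e_{k_0})$ etc.), the diffeomorphism-induced isomorphism $\Psi^q_{\phi(\tau_0)\tau_0}\colon\T^q_{\tau_0}\to\T^q_{\phi(\tau_0)}$ is, through the canonical identifications $\theta_{\tau_0},\theta_{\tau_{k_0}}$ with the abstract $\T^q$, simply the identity on generators up to the label matching; its only effect is to relabel, and the possible twist $J^\epsilon$ in $\phi=\phi_1\cdots\phi_{k_0}J^\epsilon$ acts trivially on $\tau_0$ and on edge weights, hence trivially on the representation. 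The remaining mismatch is that $\rho_{k_0}=\rho_{x_{k_0}y_{k_0}z_{k_0}}$ and $\rho_0=\rho_{x_0y_0z_0}$ have equal $n$-th powers of all parameters and equal product $h$; by Proposition~\ref{prop:StandardRep}(3) they are isomorphic, and Lemma~\ref{lem:TwistIntertwiner} gives the explicit isomorphism $T_{l_0m_0n_0}$ where $l_0,m_0,n_0$ are exactly the integers with $x_0=q^{4l_0}x_{k_0}$, $y_0=q^{4m_0}y_{k_0}$, $z_0=q^{4n_0}z_{k_0}$, $l_0+m_0+n_0\equiv0\pmod n$ fixed just before the statement. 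Pre-composing the chain with $T_{l_0m_0n_0}$ therefore produces an intertwiner $\Lambda=\Lambda_1\circ\cdots\circ\Lambda_{k_0}\circ T_{l_0m_0n_0}$ satisfying exactly the defining relation $(\bar\rho\circ\Phi^q_{\tau\phi(\tau)}\circ\Psi^q_{\phi(\tau)\tau})(X)=\Lambda\circ\bar\rho(X)\circ\Lambda^{-1}$ of $\bar\Lambda_{\phi,\bar r}^q$, where $\bar\rho=\rho_0$.

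Then I would check normalization: by Lemma~\ref{lem:LeftRightIntertwinerDeterminant} each $L_{u_kv_k},R_{u_kv_k}$ has determinant of modulus $1$, and by Lemma~\ref{lem:TwistIntertwiner} $\det T_{l_0m_0n_0}=1$, so $|\det\Lambda|=1$; this is precisely the normalization imposed on $\bar\Lambda_{\phi,\bar r}^q$ in Proposition~\ref{prop:QuantumTeichmullerIntertwinerDefined}. By Schur's lemma (irreducibility of $\rho_0$, Proposition~\ref{prop:StandardRep}(1)), $\Lambda$ and $\bar\Lambda_{\phi,\bar r}^q$ then agree up to a scalar of modulus $1$; up to conjugation and such a scalar they are the same, which is the asserted conclusion for $\bar\Lambda_{\phi,\bar r}^q$. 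Finally, invoking Theorem~\ref{thm:CheFockAndKauffmanIntertwiners} under its hypotheses (no nontrivial sign-reversal symmetry, matching projected character and the relation $p_v^2=h_v+h_v^{-1}+2$), $\Lambda_{\phi,r}^q$ is conjugate to $\bar\Lambda_{\phi,\bar r}^q$ up to an $n$-th root of unity, hence to $\Lambda$ up to conjugation and rescaling by a factor of modulus $1$, as claimed.

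The main obstacle I expect is bookkeeping rather than conceptual: one must verify that, in the one-puncture-torus normalization where $S_{1,1}$ has a single triangulation $\tau_0$ up to the mapping class group, the \emph{order} of composition of the $\Phi^q$'s (and hence of the $\Lambda_k$'s) matches the order $\phi_1\circ\phi_2\circ\cdots\circ\phi_{k_0}$ rather than its reverse — this is flagged as subtle in the proof of Lemma~\ref{lem:OrganizePhi} — and that the abstract isomorphisms $\mathcal L,\mathcal R$ really do coincide with $\Phi^q_{\tau_{k-1}\tau_k}$ when the two faces of $\tau_k$ are glued into a one-holed torus (the ``extreme case'' formulas of \cite{Liu,BonLiu} with $i_1=i_3$, $i_2=i_4$), so that Proposition~\ref{prop:LeftRight} and Lemma~\ref{lem:LeftRightIntertwiners} are the correct explicit incarnations of Proposition~\ref{prop:CheFockRepsCompatibleIdealTriangChange} in this setting. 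Once these identifications are pinned down, the argument is a direct concatenation of Lemmas~\ref{lem:LeftRightIntertwiners} and \ref{lem:TwistIntertwiner} with Theorem~\ref{thm:CheFockAndKauffmanIntertwiners}.
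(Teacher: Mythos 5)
Your proposal is correct and follows essentially the same route as the paper: chain the elementary intertwiners $\Lambda_k$ across the sweep using Lemma~\ref{lem:LeftRightIntertwiners}, absorb the edge-relabelling via $\Psi^q_{\phi(\tau_0)\tau_0}=\theta_{\tau_{k_0}}\circ\theta_{\tau_0}^{-1}$ and the residual parameter mismatch via $T_{l_0m_0n_0}$ from Lemma~\ref{lem:TwistIntertwiner}, verify $|\det\Lambda|=1$, invoke Schur to pin $\Lambda$ down modulo a unit scalar, and transfer to $\Lambda^q_{\phi,r}$ by Theorem~\ref{thm:CheFockAndKauffmanIntertwiners}. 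Your explicit note that the stabilizer element $J^\epsilon$ fixes $\tau_0$ and its edge labelling (hence contributes nothing to $\Psi^q_{\phi(\tau_0)\tau_0}$) is a welcome clarification that the paper's proof leaves implicit; the one small slip is that $\rho_{k-1}\circ\Phi^q_{\tau_{k-1}\tau_k}$ is a representation of $\T^q_{\tau_k}$, not of $\T^q_{\tau_{k-1}}$, but this does not affect the argument.
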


\begin{proof}
 For each $k$, the edge weight system $(x_k, y_k, z_k)$ defines a standard representation $\rho_{x_ky_kz_k} \colon \T^q \to \End(\C^n)$ as in \S \ref{subsect:StandardReps}, and therefore a representation $\rho_k = \rho_{x_ky_kz_k} \circ \theta_{\tau_k}^{-1} \colon \T_{\tau_k}^q(S) \to \End(\C^n)$ by composition with the canonical isomorphism between $\T^q$ and the Chekhov-Fock algebra of the ideal triangulation $\tau_k$. 
 
Recall that $\tau_k$ is obtained from $\tau_{k-1}$ by a diagonal exchange and an edge relabelling. The formulas of \cite{Liu} for the Chekhov-Fock coordinate change isomorphisms in the special case of the one-puncture torus then show that 
$$
\theta_{\tau_{k-1}}^{-1} \circ \Phi_{\tau_{k-1}\tau_k}^q \circ \theta_{\tau_k} \colon \T^q \to \T^q
$$
coincides with the isomorphism $\mathcal L$ of \S \ref{subsect:LeftRightIntertwiners} if the diagonal exchange is performed along the edge $g_{k-1}$, and with $\mathcal R$ if  the diagonal exchange is performed along the edge $f_{k-1}$. Equivalently, this isomorphism $\theta_{\tau_{k-1}}^{-1} \circ \Phi_{\tau_{k-1}\tau_k}^q \circ \theta_{\tau_k} $ is equal to $\mathcal L$ if $\phi_k=L$, and to $\mathcal R$ if $\phi_k=R$. 
 
 Lemma~\ref{lem:LeftRightIntertwiners} then shows that 
 $$
 \rho_{k-1} \circ \Phi_{\tau_{k-1}\tau_k}^q (W) = \Lambda_k \circ  \rho_k(W) \circ \Lambda_k^{-1}
 $$
 for every $W \in \T_{\tau_k}^q(S_{1,1})$. 
 
 Similarly, let $\Psi_{\phi, \tau_0}^q \colon \T_{\tau_0}^q(S_{1,1}) \to \T_{\tau_{k_0}}^q(S_{1,1})$ be the isomorphism induced by $\phi$, sending the generator $ \T_{\tau_0}^q(S_{1,1})$ corresponding to an edge of $\tau_0$ to the generator of $\T_{\tau_{k_0}}^q(S_{1,1})$ corresponding to the image of that edge under $\phi$. With our conventions, $\Psi_{\phi, \tau_0}^q $ is just $\theta_{\tau_{k_0}} \circ \theta_{\tau_0}^{-1}$. Lemma~\ref{lem:TwistIntertwiner} then shows that 
 $$
 \rho_{k_0} \circ \Psi_{\phi, \tau_0}^q  (W) = T_{l_0m_0n_0} \circ \rho_0(W) \circ T_{l_0m_0n_0} ^{-1}. 
 $$
 
 Combining these properties, we see that 
 $$
 \rho_0 \circ \Phi_{\tau_{k_0} \tau_0}^q \circ \Psi_{\phi, \tau_0}^q (W) = \Lambda \circ \rho_0(W) \circ \Lambda^{-1}
 $$
 for the isomorphism $\Lambda \colon \C^n \to \C^n$ defined in the statement. 
 
 This is the property satisfied by the intertwiner $\bar \Lambda_{\phi, \bar r}^q$ of Proposition~\ref{prop:QuantumTeichmullerIntertwinerDefined}. The determinant of $\Lambda$ has modulus 1 by Lemmas~\ref{lem:LeftRightIntertwinerDeterminant} and \ref{lem:TwistIntertwiner}. 
 Proposition~\ref{prop:QuantumTeichmullerIntertwinerDefined} then shows that 
 $\bar \Lambda_{\phi, \bar r}^q$ and $\Lambda$ coincide up to conjugation and rescaling by a modulus 1 scalar. 
 
 Theorem~\ref{thm:CheFockAndKauffmanIntertwiners} then shows that the three intertwiners $\Lambda$, $\bar \Lambda_{\phi, \bar r}^q$ and $\Lambda_{\phi, r}^q$ coincide up to conjugation and modulus 1 rescaling. 
\end{proof}

Note that there was some freedom in this computation, as we could arbitrarily choose $n$--roots $v_k$ such that $v_k^n = 1+ u_k^n$. These choices are balanced by the correction factors $l_0$, $m_0$, $n_0  \in \Z$, in the sense that different choices for the $v_k$ will in general lead to different values for these correction factors.

We are of course interested in applying Proposition~\ref{prop:IntertwinerCompElementaryIntertwiner} in the context of Conjecture~\ref{con:MainConjecture}. We now connect the two points of view. 

 In the hypotheses of that conjecture, we were given a character $[r] \in \XX(S_{1,1})$ and a puncture weight $\theta_v \in \C$ such that $  \Tr r(\alpha_v) = -\E^{\theta_v} - \E^{- \theta_v}$ when  $\alpha_v$ is a small loop going once around the puncture. In the above construction we were given,  in addition to the periodic edge weight system $(a_0, b_0, c_0)$, $(a_1, b_1, c_1)$, \dots, $(a_{k_0}, b_{k_0}, c_{k_0}) = (a_0, b_0, c_0)$  defining the character $[\bar r] \in \XP(S_{1,1})$,  a global weight $h \in \C^*$ that is an $n$--root of the products $a_kb_kc_k$ (which are independent of $k$). 

\begin{lem}
\label{lem:ConnectViewpoints}
With the above data, the hypotheses of Theorem~{\upshape\ref{thm:CheFockAndKauffmanIntertwiners} }hold (and we can apply Proposition~{\upshape\ref{prop:IntertwinerCompElementaryIntertwiner}}) if and only if   $[ r] \in \XX(S_{1,1})$ lifts  $[\bar r] \in \XP(S_{1,1})$ and $h= \E^{\pm\frac1n \theta_v}$. 
\end{lem}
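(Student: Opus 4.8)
The plan is to unwind the two sets of hypotheses and match them up term by term. The hypotheses of Theorem~\ref{thm:CheFockAndKauffmanIntertwiners} require (i) that $[\bar r]\in\XP(S_{1,1})$ be the image of $[r]\in\XX(S_{1,1})$ under the canonical projection $\XX(S_{1,1})\to\XP(S_{1,1})$, (ii) that $p_v^2=h_v+h_v^{-1}+2$ for the puncture $v$, where $h_v=h^2$ is the Chekhov--Fock puncture weight and $p_v$ is the skein puncture weight attached to $[r]$, and (iii) that $[r]$ has no nontrivial sign-reversal symmetry. Condition (i) is literally the statement that $[r]$ lifts $[\bar r]$, so half the lemma is tautological once we translate notation. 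For the one-puncture torus, condition (iii) is automatic: a nontrivial sign-reversal symmetry would require $\epsilon\in H^1(S_{1,1};\Z/2)$ nonzero with $\Tr r(\alpha)=0$ for all $\alpha$ with $\epsilon([\alpha])\neq 0$, but $H_1(S_{1,1};\Z/2)\cong(\Z/2)^2$ and the peripheral class $[\alpha_v]$ is trivial in $H_1$, so such an $\epsilon$ would force $\Tr r$ to vanish on a substantial part of $\pi_1$; since $[r]$ is irreducible (hence in the smooth part, so that Theorem~\ref{thm:SkeinRepUnique} applies) this cannot happen — alternatively, one cites \cite[\S5.1]{BonWon4}, where the sign-reversal locus is shown to have high codimension and is explicitly empty here because the puncture class is null-homologous mod $2$. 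So the real content is condition (ii).

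The heart of the argument is therefore the equivalence
$$
p_v^2=h_v+h_v^{-1}+2 \quad\Longleftrightarrow\quad h=\E^{\pm\frac1n\theta_v},
$$
under the standing constraints $T_n(p_v)=-\Tr r(\alpha_v)=\E^{\theta_v}+\E^{-\theta_v}$ and $h^n=a_0b_0c_0$. First I would invoke Lemma~\ref{lem:PunctureEigenvalueShearbendParam} together with Remark~\ref{rem:PunctureEigenvalueShearbendParam}: since $[r]$ lifts $[\bar r]$, the product $a_0b_0c_0$ of the edge weights around $v$ (here each of the three edges $e_0$, $f_0$, $g_0$ of $\tau_0$ meets the unique puncture twice, so the relevant product is $a_0^2 b_0^2 c_0^2$ in the ``counting with multiplicity'' convention, hence $h_v^n=(a_0b_0c_0)^2=\lambda_v^2$ after taking $h$ with $h^n=a_0b_0c_0$) equals $\lambda_v^2$ for the distinguished eigenvalue $\lambda_v$ of $r(\alpha_v)\in\SL$. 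Thus $h_v=h^2$ and $h_v^n=\lambda_v^2$, so $h$ is an $n$-th root of $\lambda_v$. On the skein side, the elementary Chebyshev fact already recalled in \S\ref{subsect:VolConjSurfaceDiffeos} and in the proof of Theorem~\ref{thm:CheFockAndKauffmanIntertwiners} (see \cite[Lem.~17]{BonWon4}) says that the solutions of $T_n(x)=-\Tr r(\alpha_v)=\E^{\theta_v}+\E^{-\theta_v}$ — equivalently, writing $\Tr r(\alpha_v)=-\E^{\theta_v}-\E^{-\theta_v}$ — are exactly $p_v=-k_v-k_v^{-1}$ as $k_v$ ranges over the $n$-th roots of $\E^{\theta_v}$. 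Note $\lambda_v+\lambda_v^{-1}=-\Tr r(\alpha_v)=\E^{\theta_v}+\E^{-\theta_v}$ forces $\{\lambda_v,\lambda_v^{-1}\}=\{-\E^{\theta_v},-\E^{-\theta_v}\}$, so $-\E^{\theta_v}$ is one of the two allowed values of $\lambda_v$; the choice of $\theta_v$ up to sign corresponds to the choice between $\lambda_v$ and $\lambda_v^{-1}$, and does not affect $p_v$.

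The final step is the algebraic manipulation carried out (for a general surface) inside the proof of Theorem~\ref{thm:CheFockAndKauffmanIntertwiners}: the equation $p_v^2=h_v+h_v^{-1}+2=(h+h^{-1})^2$, i.e.\ $p_v=\pm(h+h^{-1})$, combined with $p_v=-k_v-k_v^{-1}$ for some $n$-th root $k_v$ of $\lambda_v$, is equivalent to the existence of an $n$-th root $k_v$ of $\lambda_v$ with $h=\pm k_v^{\pm1}$, i.e.\ (absorbing signs, using that $n$ is odd so $-1$ is an $n$-th power) with $h^n=\lambda_v$ or $h^n=\lambda_v^{-1}$; translating back through $\lambda_v=-\E^{\pm\theta_v}$ and again using $(-1)^n=-1$ with $n$ odd, this says precisely $h^n=-\E^{\theta_v}$ up to the sign/inversion ambiguity, which after taking $n$-th roots and using $h^n=a_0b_0c_0=\lambda_v$ pins down $h=\E^{\pm\frac1n\theta_v}$ (the $n$-th root being determined because $h$ was chosen with $h^n=a_0b_0c_0$, not merely $h^n=\lambda_v^{\pm1}$). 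I would spell out the two slightly exceptional cases $\lambda_v=\pm1$ and $\lambda_v=\pm\I$, exactly as flagged parenthetically in the proof of Theorem~\ref{thm:CheFockAndKauffmanIntertwiners}, where the square-root juggling needs a separate check. I expect the main obstacle to be bookkeeping: keeping the various sign conventions straight — the minus sign in $T_n(p_v)=-\Tr r$, the minus sign in the crossratio being $-a_i$ versus $a_i$, the relation $\Tr r(\alpha_v)=-\E^{\theta_v}-\E^{-\theta_v}$ versus $+\E^{\theta_v}+\E^{-\theta_v}$ that appears inconsistently in the statements of Conjecture~\ref{con:MainConjecture} and \S\ref{sect:Experimental} — and making sure the ``counting an edge twice'' convention in Lemma~\ref{lem:PunctureEigenvalueShearbendParam} is applied correctly for $S_{1,1}$, where all three edges are doubly incident to the single puncture.
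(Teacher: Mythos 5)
Your overall plan is sound and the essential reduction — that conditions (i) and (iii) are largely bookkeeping and the real content is translating the puncture-weight constraint (ii) — matches the paper. But there are two concrete issues.

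First, your dismissal of the sign-reversal condition is wrong. Irreducibility does \emph{not} rule out nontrivial sign-reversal symmetries on $S_{1,1}$: take $r(a)=\bigl(\begin{smallmatrix} \I&0\\0&-\I\end{smallmatrix}\bigr)$, $r(b)=\bigl(\begin{smallmatrix}0&1\\-1&0\end{smallmatrix}\bigr)$; this quaternion representation is irreducible, yet $\Tr r(\alpha)=0$ for every $\alpha$ with $[\alpha]\neq 0$ in $H_1(S_{1,1};\Z/2)$, so it is fixed by \emph{every} nonzero $\epsilon\in H^1(S_{1,1};\Z/2)$. The ``high codimension'' locus in \cite[\S5.1]{BonWon4} is Zariski-closed but not empty, and the peripheral class being null-homologous plays no role. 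The paper simply does not address condition (iii) in the proof of this lemma; it is a standing hypothesis carried along from the ambient setup, not something to be rederived.

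Second, for condition (ii) the paper takes a shorter and cleaner route than the one you sketch. Instead of invoking Lemma~\ref{lem:PunctureEigenvalueShearbendParam} and the eigenvalue $\lambda_v$, it appeals directly to the explicit trace identity for the one-puncture torus, $\Tr r(\alpha_v)=-a_0b_0c_0-(a_0b_0c_0)^{-1}$ (cited from \cite[\S8.4]{BonBook}), and matches it against the conjecture's normalization $\Tr r(\alpha_v)=-\E^{\theta_v}-\E^{-\theta_v}$ to get $\E^{\theta_v}=(a_0b_0c_0)^{\pm1}$. Combined with $h_v=h^2$ (hence $p_v^2=h_v+h_v^{-1}+2\Leftrightarrow p_v=\pm(h+h^{-1})$), $h^n=a_0b_0c_0$, and the conjecture's choice $p_v=\E^{\frac1n\theta_v}+\E^{-\frac1n\theta_v}$, this pins down $h=\E^{\pm\frac1n\theta_v}$ and in fact forces the sign $p_v=+(h+h^{-1})$. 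Your route through $\lambda_v$ also contains a sign slip you should be wary of: the solutions of $T_n(x)=-\Tr r(\alpha_v)=\E^{\theta_v}+\E^{-\theta_v}$ are $x=y+y^{-1}$ as $y$ ranges over $n$-th roots of $\E^{\theta_v}$ (this is the form stated in \S\ref{subsect:VolConjSurfaceDiffeos}); the form $x=-k_v-k_v^{-1}$ corresponds to $k_v^n=\lambda_v=-a_0b_0c_0$, not $k_v^n=\E^{\theta_v}$, and conflating the two is exactly the kind of bookkeeping error you flag at the end. Your observation that Conjecture~\ref{con:MainConjecture} and \S\ref{sect:Experimental} differ by a sign in the constraint on $\theta_v$ is correct and worth noting, but the proof of the lemma should stick to the Conjecture's convention.
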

\begin{proof} The puncture weight $h_v$ of Proposition~\ref{prop:RepsCheFock} is related to $h$ by the property that $h_v=h^2$. Then the relation $p_v^2 = h_v + h_v^{-1} +2$ of Theorem~\ref{thm:CheFockAndKauffmanIntertwiners} holds if and only if $p_v = \pm (h + h^{-1})$. 

The sign $\pm$ is actually restricted by the geometry. Indeed, a simple computation (see for instance \cite[\S 8.4]{BonBook}) shows that $\Tr r(\alpha_v) = - a_0b_0c_0 -(a_0b_0c_0)^{-1} $.
Since $  \Tr r(\alpha_v) = -\E^{\theta_v} - \E^{- \theta_v}$ in the setup of Conjecture~\ref{con:MainConjecture}, it follows that $\E^{\theta_v}= (a_0 b_0 c_0)^{\pm1}$. Because of the hypothesis $p_v = \E^{\frac1n \theta_v} + \E^{-\frac1n\theta_v}$ in Conjecture~\ref{con:MainConjecture} and because $h^n = a_0b_0c_0$,  the  condition $p_v = \pm ( h+h^{-1} )$  is therefore equivalent to $h= \E^{\pm \frac1n \theta_v}$ (and $p_v = + ( h+h^{-1} )$).
\end{proof}

\subsection{Computing the correction factors $(l_0, m_0, n_0)$}
\label{subsect:CorrectionFactors}

We  need a practical way to compute the correction terms $(l_0, m_0, n_0) \in \Z^3$ that occurred in the previous section, and this in a way that is systematic in $n$ so that we can better estimate asymptotics in the framework of Conjecture~\ref{con:MainConjecture}. The general idea is that, whenever we need to choose an $n$--root for a quantity $w\in \Z^*$, we will take it of the form $\exp \frac1n W$ for a fixed ``logarithm'' $W\in \C$, namely a number independent of $n$ such that $\exp W=w$. We will try to systematically denote these logarithms by the capital letter corresponding to the lower-case original data. 

Conjecture~\ref{con:MainConjecture} is unchanged if we replace $\theta_v$ by $-\theta_v$ in its hypothesis. By Lemma~\ref{lem:ConnectViewpoints}, we can therefore arrange that $h = \E^{\frac1n \theta_v}$. 

Since $\E^{\theta_v}=h^n=a_0b_0c_0$, we can now  choose ``logarithms'' $A_0$, $B_0$, $C_0 \in \C$ for $a_0$, $b_0$, $c_0$, such that
\begin{align*}
 a_0 &= \exp A_0
 &
 b_0 &= \exp B_0
 &
 c_0 &= \exp C_0
\end{align*}
and $ A_0 +  B_0 +  C_0 = \theta_v$.

After these initial choices, we inductively define $A_k$, $B_k$, $C_k \in \C$ as follows. At each step of the induction, we arbitrarily choose a  ``logarithm'' $V_k \in \C$ such that 
$$\exp V_k = 1+a_k^{-1}. $$
We could systematically take $V_k = \log \left( 1+a_k^{-1} \right)$ for whatever (discontinuous) complex logarithm function $\log$ we prefer, but in \cite{BWY3} it will be convenient to make these $V_k$ depend continuously on the original data. We consequently allow here some flexibility in the choice of the $V_k$.  

We then set
\begin{align*}
A_k &=  -B_{k-1}
\\
B_k&=  2 V_k + A_{k-1}
\\
C_k &= -2 V_k  +2B_{k-1} + C_{k-1}
\end{align*}
if $\phi_k=L$, and 
\begin{align*}
A_{k} &=  -C_{k-1}
\\
B_{k} &=  2 V_k + B_{k-1}
\\
C_{k} &= -2 V_k +2 C_{k-1} +A_{k-1}.
\end{align*}
if $\phi_k=R$.

The construction is designed so that, if for each $n$ we define 
\begin{align}
 u_k &=
\textstyle
 q \exp \left(  - \frac1n A_k  \right)
&
 v_k &= 
 \textstyle \exp  \frac1n V_k 
\end{align}
\begin{align}
\label{eqn:DefineX_kY_kZ_kLambda}
 x_k &= \textstyle \exp \frac1n A_k
 &
 y_k &= \textstyle \exp \frac1n B_k
 &
 z_k &=\textstyle \exp \frac1n C_k
\end{align}
these $u_k$, $v_k$, $x_k$, $z_k$, $z_k$ satisfy the required relations (\ref{eqn:DefineX_kY_kZ_kLeft}--\ref{eqn:DefineX_kY_kZ_kRight}). 

In particular, $x_k^n = a_k$, $y_k^n = b_k$, $z_k^n = c_k$ and $x_k y_k z_k = h = \exp\frac1n \theta_v$ for every $k$. 

Since  $(a_{k_0}, b_{k_0}, c_{k_0})=(a_0, b_0, c_0)$ it follows that, for the corresponding logarithms,  there exists $\widehat l_0$, $\widehat m_0$, $\widehat n_0 \in \Z$ such that 
\begin{align*}
A_0 &=  A_{k_0} + 2\pi \I\, \widehat l_0
 \\
B_0 &= B_{k_0}  + 2\pi \I\, \widehat m_0
 \\
C_0 &=  C_{k_0} + 2\pi \I\, \widehat n_0.
\end{align*}

By construction, the quantity $A_k + B_k + C_k$ is independent of $k$. It follows that
$$
\widehat  l_0  + \widehat  m_0 +\widehat  n_0 =0. 
$$

\begin{lem}
\label{lem:CorrectionFactors}
If $q=\E^{\frac{2\pi\I}n}$ and for these choices of $u_k$, $v_k$, $x_k$, $y_k$, $z_k$, we can take in the formula of Proposition~{\upshape\ref{prop:IntertwinerCompElementaryIntertwiner}} the correction factors $l_0$, $m_0$, $n_0 \in \Z$ to be equal to
\begin{align*}
l_0 &=  \widehat l_0 \textstyle \frac{(n-1)^2}4 
&
m_0 &=  \widehat m_0 \textstyle \frac{(n-1)^2}4 
&
n_0 &= \widehat n_0 \textstyle \frac{(n-1)^2}4 
\end{align*}
where these quantities are defined as above. 
\end{lem}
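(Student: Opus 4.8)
The plan is to peel off the two layers of notation. Recall that in \S\ref{subsect:IntertwinerCompElementaryIntertwiner} the correction factors $(l_0,m_0,n_0)\in\Z^3$ were characterized by the requirements $x_0=q^{4l_0}x_{k_0}$, $y_0=q^{4m_0}y_{k_0}$, $z_0=q^{4n_0}z_{k_0}$ together with $l_0+m_0+n_0=0$; any integer triple satisfying these is an admissible choice in Proposition~\ref{prop:IntertwinerCompElementaryIntertwiner}, because by Lemma~\ref{lem:TwistIntertwiner} the operator $T_{l_0m_0n_0}$, and hence $\Lambda$, depends on $(l_0,m_0,n_0)$ only through the residues of $l_0$ and of $n_0-m_0$ modulo $n$ (here $q^n=1$). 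So the task is just to check that the explicit triple in the statement has these two properties, when the $u_k,v_k,x_k,y_k,z_k$ are the ones built from the logarithms $A_k,B_k,C_k$ of \S\ref{subsect:CorrectionFactors}.

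First I would note --- as was already observed when those logarithms were introduced --- that the inductive formulas for $A_k,B_k,C_k$ are exactly the ``logarithmic'' shadows of (\ref{eqn:DefineX_kY_kZ_kLeft})--(\ref{eqn:DefineX_kY_kZ_kRight}): applying $w\mapsto\exp\tfrac1n w$ shows directly that $u_k=q\exp(-\tfrac1n A_k)$, $v_k=\exp(\tfrac1n V_k)$ and $x_k=\exp(\tfrac1n A_k)$, $y_k=\exp(\tfrac1n B_k)$, $z_k=\exp(\tfrac1n C_k)$ satisfy all the relations of \S\ref{subsect:IntertwinerCompElementaryIntertwiner}. In particular $x_k^n=a_k$, $y_k^n=b_k$, $z_k^n=c_k$, $x_ky_kz_k=h$, and, using $q^n=1$, one gets $u_k^n=\exp(-A_k)=a_k^{-1}$, so that $v_k^n=\exp(V_k)=1+a_k^{-1}=1+u_k^n$. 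Thus the construction of Proposition~\ref{prop:IntertwinerCompElementaryIntertwiner} genuinely applies to these data, and only the value of $(l_0,m_0,n_0)$ remains to be pinned down.

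Next I would convert the identity $A_0=A_{k_0}+2\pi\I\,\widehat l_0$ into information about $x_0$ and $x_{k_0}$: dividing by $n$ and exponentiating, using $q=\E^{\frac{2\pi\I}{n}}$,
\[
x_0=\exp\!\big(\tfrac1n A_0\big)=\exp\!\big(\tfrac1n A_{k_0}\big)\exp\!\big(\tfrac{2\pi\I}{n}\widehat l_0\big)=q^{\widehat l_0}\,x_{k_0},
\]
and likewise $y_0=q^{\widehat m_0}y_{k_0}$, $z_0=q^{\widehat n_0}z_{k_0}$. Hence it suffices to produce an integer $l_0$ with $q^{4l_0}=q^{\widehat l_0}$, i.e. $4l_0\equiv\widehat l_0\pmod n$, and similarly for $m_0$ and $n_0$. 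This is where the hypothesis that $n$ is odd is used: then $\tfrac{(n-1)^2}{4}$ is an integer, and since $(n-1)^2=n^2-2n+1\equiv1\pmod n$ the choice $l_0:=\widehat l_0\,\tfrac{(n-1)^2}{4}$ gives $4l_0=\widehat l_0(n-1)^2\equiv\widehat l_0\pmod n$, as wanted; the same works verbatim for $m_0$ and $n_0$. Finally $l_0+m_0+n_0=(\widehat l_0+\widehat m_0+\widehat n_0)\tfrac{(n-1)^2}{4}=0$, since $\widehat l_0+\widehat m_0+\widehat n_0=0$ (the sum $A_k+B_k+C_k$ being independent of $k$, as recorded just before the statement). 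This exhibits the triple of the Lemma as an admissible choice of correction factors, which is all that is asserted.

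There is no real obstacle: the argument is bookkeeping. The only point worth flagging is to keep the ``true'' correction integers $(\widehat l_0,\widehat m_0,\widehat n_0)$, arising from the $2\pi\I$-ambiguity of the chosen logarithms, distinct from the integers $(l_0,m_0,n_0)$ feeding into $T_{l_0m_0n_0}$; the two are related by multiplication by the inverse of $4$ modulo $n$, which is precisely what the explicit factor $\tfrac{(n-1)^2}{4}$ encodes. One may also double-check, straight from Lemma~\ref{lem:TwistIntertwiner} and $q^n=1$, that replacing $(l_0,m_0,n_0)$ by any triple congruent to it componentwise modulo $n$ and still summing to $0$ leaves $T_{l_0m_0n_0}$ unchanged, so the particular representative produced above is as good as any other.
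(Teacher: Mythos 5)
Your proof is correct and follows essentially the same route as the paper's: one converts the $2\pi\I$-ambiguity in the chosen logarithms $A_k,B_k,C_k$ into the relation $x_0 = q^{\widehat l_0} x_{k_0}$ (and likewise for $y,z$) and then observes that $q^{(n-1)^2}=q$, so $q^{4l_0}=q^{\widehat l_0(n-1)^2}=q^{\widehat l_0}$. The paper's own argument is essentially the single displayed computation $x_0=\exp\frac1n A_0=\exp(\frac1n A_{k_0}+\frac{2\pi\I}n\widehat l_0)=q^{\widehat l_0}x_{k_0}=q^{4l_0}x_{k_0}$; your version spells out the same steps, adds the check that $l_0+m_0+n_0=0$ (which the paper leaves implicit, having established $\widehat l_0+\widehat m_0+\widehat n_0=0$ just before), and records the remark that $T_{l_0m_0n_0}$ depends on $(l_0,m_0,n_0)$ only through residues mod $n$. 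None of these extras change the substance; this is a more verbose rendering of the same bookkeeping.
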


\begin{proof} By construction,  
\begin{align*}
 x_0 &=  \textstyle \exp\frac1nA_0
 =  \textstyle \exp \left( \frac1n A_{k_0} +  \frac{2\pi\I}n\, \widehat l_0 \right)
 = q^{\widehat l_0} x_{k_0} = q^{4 l_0} x_{k_0}
\end{align*}
since $q^{(n-1)^2} = q$. Similarly, $y_0 =  q^{4 m_0} y_{k_0}$ and  $z_0 =  q^{4 n_0} z_{k_0}$. 

This is exactly what we needed to apply Proposition~{\upshape\ref{prop:IntertwinerCompElementaryIntertwiner}}. 
\end{proof}

\begin{rem}
 If  we take the different value $q=\E^{\frac{2k\pi\I}n}$ for some $k \in \Z$ coprime with $n$, then in the formulas of Lemma~\ref{lem:CorrectionFactors} the terms $\frac{(n-1)^2}4$ will get replaced by the inverse of $4k$ modulo~$n$. 
\end{rem}

\subsection{Explicit formulas}
\label{subsect:ExplicitComputation}
By Proposition~\ref{prop:IntertwinerCompElementaryIntertwiner}, the intertwiner of Proposition~\ref{prop:KauffmanIntertwinerDefined} can be taken as 
$$
\Lambda_{\phi, r}^q =  \Lambda_1   \Lambda_2 \dots   \Lambda_{k_0}  T_{l_0m_0n_0}
$$
for the isomorphisms $\Lambda_k$ and $T_{l_0m_0n_0}$ defined there. 
As a consequence, its trace is equal to 
$$
\Tr \Lambda_{\phi, r}^q  = 
\sum_{i_0, \, i_1, \dots, \, i_{k_0}=1}^n
(\Lambda_1)_{i_0i_1}   (\Lambda_2)_{i_1i_2} \dots   (\Lambda_{k_0})_{i_{k_0-1}i_{k_0}}  (T_{l_0m_0n_0})_{i_{k_0}i_0}
$$
where $(\Lambda_k)_{ij}$ and $(T_{l_0m_0n_0})_{ij}$ are the entries of the matrices of the elementary intertwiners $\Lambda_k$, $T_{l_0m_0n_0} \colon \C^n \to \C^n$. 

In particular, for the discrete quantum dilogarithm function of \S \ref{subsect:DiscreteQuantumDilog}, 
$$
(\Lambda_k)_{ij} = (L_{u_k v_k})_{ij} =  \frac {\QDL^q(u_k, v_k \vbar  2j) } {\left| D^q(u_k) \right|^{\frac1n}  \sqrt n } \,  q^{-i^2 +j^2+4ij +i-j}  
$$
if the matrix $\phi_k$ is equal to $L$, and
$$
(\Lambda_k)_{ij} = (R_{u_kv_k})_{ij} = \frac {\QDL^q(u_k, v_k \vbar  2j) } { \left| D^q(u_k) \right|^{\frac1n}  \sqrt n } \, q^{i^2 +3j^2-4ij+i-j} 
$$
if $\phi_k = R$. We can combine both cases by saying that
$$
(\Lambda_k)_{ij} = \frac {\QDL^q(u_k, v_k \vbar  2j) } {\left| D^q(u_k) \right|^{\frac1n}  \sqrt n } \, q^{ \epsilon_k(i^2 + j^2 -4ij) + 2j^2 +i-j} 
$$
with
$$
 \epsilon_k = 
\begin{cases}
-1 &\text{if } \phi_k=L\\
+1&\text{if } \phi_k=R.
\end{cases}
$$

Also,
$$
(T_{l_0m_0n_0})_{ij} = q^{2j (n_0 -m_0 )} \delta_{i, j+l_0}
$$

Therefore,
\begin{align*}
\Tr \Lambda_{\phi, r}^q
&=  A_n'\sum_{i_0,\, i_1, \dots,\, i_{k_0}=1}^n
\prod_{k=1}^{k_0}  \QDL^q(u_k, v_k \vbar  2i_k)   \\
& \qquad\qquad\qquad\qquad\qquad q^{ \sum_{k=1}^{k_0} \epsilon_k(i_{k-1}^2 + i_k^2 -4i_{k-1}i_k) +  2 \sum_{k=1}^{k_0} i_k^2 + i_0-i_{k_0} } \\
&  \qquad\qquad\qquad\qquad\qquad\qquad\qquad\qquad  q^{2i_0 (n_0 -m_0 )} \delta_{i_{k_0}, i_0+l_0}
\\
&=A_n\sum_{i_1,\, i_2, \dots,\, i_{k_0}=1}^n
\prod_{k=1}^{k_0}  \QDL^q(u_k, v_k \vbar  2i_k) \\
& \qquad\qquad\qquad\qquad q^{\epsilon_1(i_{k_0}^2 + i_1^2 -4i_{k_0} i_1) + \sum_{k=2}^{k_0} \epsilon_k(i_{k-1}^2 + i_k^2 -4i_{k-1}i_k) +  2 \sum_{k=1}^{k_0} i_k^2} \\
&  \qquad\qquad\qquad\qquad\qquad\qquad\qquad\qquad \qquad q^{ 4\epsilon_1 n_0 i_1 +2i_{k_0} (-\epsilon_1 kl_0 -m_0 +n_0)}
\end{align*}
with
$$
A_n = \frac
{ q^{l_0(\epsilon_1 l_0 + 2 m_0 -2 n_0-1) }}
{n^{\frac {k_0}2}\prod_{k=1}^{k_0} \left| D^q(u_k) \right|^{\frac1n} }
$$
(and $A_n' = A_n  q^{-l_0(\epsilon_1 l_0 + 2 m_0 -2 n_0-1) }$). If we set $\epsilon_{k_0+1}=\epsilon_1$, this can also be rewritten as
\begin{align*}
\Tr \Lambda_{\phi, r}^q
&=A_n\sum_{i_1,\, i_2, \dots,\, i_{k_0}=1}^n
\prod_{k=1}^{k_0}  \QDL^q(u_k, v_k \vbar  2i_k) \\
& \qquad\qquad\qquad\qquad\qquad\quad q^{ \sum_{k=1}^{k_0} i_k^2(\epsilon_k+ \epsilon_{k+1}+2)  -4  \sum_{k=1}^{k_0-1} \epsilon_{k+1} i_ki_{k+1}  -4 \epsilon_1 i_{k_0} i_1 }\\
&  \qquad\qquad\qquad\qquad\qquad\qquad\qquad\qquad\qquad  q^{ 4\epsilon_1 l_0 i_1 +2i_{k_0} (-\epsilon_1 l_0 -m_0 +n_0)}. 
\end{align*}

If we count indices modulo $k_0$, so that $i_{k_0+1} = i_1$, the third line can be written in a more compact way by using
$$
 -4  \sum_{k=1}^{k_0-1} \epsilon_{k+1} i_ki_{k+1}  -4 \epsilon_1 i_{k_0} i_1 
 =
  -4  \sum_{k=1}^{k_0} \epsilon_{k+1} i_ki_{k+1} 
$$

Finally, remember from Lemma~\ref{lem:CorrectionFactors} that we can take $l_0 = \widehat l_0 \frac{(n-1)^2}4$,  $m_0 = \widehat m_0 \frac{(n-1)^2}4$ and  $n_0 = \widehat n_0 \frac{(n-1)^2}4$ for integers $\widehat l_0$, $\widehat m_0$, $\widehat n_0 \in \Z$ that are independent of $n$, and such that $\widehat l_0 + \widehat m_0 + \widehat n_0=0$. Then the last term of the above expression is equal to
\begin{align*}
q^{ 4\epsilon_1 l_0 i_1 +2i_{k_0} (-\epsilon_1 l_0 -m_0 +n_0)}
&= q^{ \epsilon_1 \widehat l_0 i_1  + \frac{-\epsilon_1 \widehat l_0 -\widehat m_0 +\widehat n_0}2 i_{k_0} }.
\end{align*}
Note that $-\epsilon_1 \widehat l_0 -\widehat m_0 +\widehat n_0$ is even since $\widehat l_0 + \widehat m_0 + \widehat n_0=0$.

We summarize this computation in the following statement. 

\begin{prop}
Let $\phi \colon S_{1,1} \to S_{1,1}$ be an orientation preserving diffeomorphism of the one-puncture torus. Let $[r]\in \XX(S_{1,1})$ be a $\phi$--invariant character associated to a sequence of edge weight systems $(a_0, b_0, c_0)$, $(a_1, b_1, c_1)$, \dots, $(a_{k_0-1}, b_{k_0-1}, c_{k_0-1})$,$(a_{k_0}, b_{k_0}, c_{k_0})=(a_0, b_0, c_0)$ as in Lemma~{\upshape\ref{lem:OrganizePhi}}. Finally, let $\theta_v\in \C$ be such that $\E^{\theta_v}= a_0 b_0 c_0$ and, for an odd integer $n$,  let $\Lambda_{\phi, r}^q$ be the intertwiner associated by Proposition~{\upshape\ref{prop:KauffmanIntertwinerDefined}} to the data of $\phi$, $[r]$, $q=\E^{\frac{2\pi\I}n}$ and $p_v =  \E^{\frac1n \theta_v}  +\E^{-\frac1n \theta_v} $.

Then, up to multiplication by a scalar with modulus $1$, the trace of $\Lambda_{\phi, r}^q$ is equal to
\begin{equation*}
\begin{aligned}
\Tr \Lambda_{\phi, r}^q
&=\frac1
{n^{\frac {k_0}2}\prod_{k=1}^{k_0} \left| D^q(u_k) \right|^{\frac1n} }
\\
&\qquad\qquad
\sum_{i_1,\, i_2, \dots,\, i_{k_0}=1}^n
\prod_{k=1}^{k_0}  \QDL^q(u_k, v_k \vbar  2i_k) \\
&\qquad\qquad \qquad\qquad q^{ \sum_{k=1}^{k_0} i_k^2(\epsilon_k+ \epsilon_{k+1}+2)  -4  \sum_{k=1}^{k_0} \epsilon_{k+1} i_ki_{k+1}  }\\
&  \qquad\qquad\qquad\qquad\qquad\qquad   q^{ \epsilon_1 \widehat l_0 i_1  + \frac{-\epsilon_1 \widehat l_0 -\widehat m_0 +\widehat n_0}2 i_{k_0} }
\end{aligned}
\end{equation*}
where the quantities $u_k$, $v_k$, $\epsilon_k$, $\widehat l_0$, $\widehat m_0$, $\widehat n_0$ are defined in {\upshape\S \ref{subsect:IntertwinerCompElementaryIntertwiner}} and {\upshape\S \ref{subsect:CorrectionFactors}} and where, for $u$, $v\in \C$ with $v^n = 1+u^n$, 
$$
 \QDL^q(u,v \vbar  i) = v^{-i} \prod_{j=1}^i (1+ u q^{-2j})
$$
and \pushQED{\qed}
\begin{equation*}
D^q(u)  =  \prod_{i=1}^n \QDL^q(u, v \vbar  i)  = (1+u^n)^{- \frac{n+1}2} \prod_{j=1}^{n} (1+ u q^{-2j})^{n-j+1}.
\qedhere
\end{equation*} 
\end{prop}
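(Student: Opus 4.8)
The plan is to simply assemble the ingredients prepared in the previous subsections; the final statement is a bookkeeping summary rather than a new result. By Proposition~\ref{prop:IntertwinerCompElementaryIntertwiner} (whose hypotheses are supplied by Lemma~\ref{lem:ConnectViewpoints}, once the puncture data is normalized so that $h=\E^{\pm\frac1n\theta_v}$), the intertwiner $\Lambda_{\phi,r}^q$ agrees, up to conjugation and multiplication by a scalar of modulus $1$, with the explicit product $\Lambda_1\Lambda_2\cdots\Lambda_{k_0}T_{l_0m_0n_0}$ of elementary intertwiners from \S\ref{subsect:LeftRightIntertwiners} and \S\ref{subsect:TwistIntertwiner}. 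Since conjugation leaves the trace unchanged and rescaling multiplies it by the same modulus-$1$ scalar, it suffices to compute $\Tr(\Lambda_1\cdots\Lambda_{k_0}T_{l_0m_0n_0})$. Expanding the matrix product over $\C^n$ gives
\[
\Tr\Lambda_{\phi,r}^q=\sum_{i_0,i_1,\dots,i_{k_0}=1}^{n}(\Lambda_1)_{i_0i_1}(\Lambda_2)_{i_1i_2}\cdots(\Lambda_{k_0})_{i_{k_0-1}i_{k_0}}(T_{l_0m_0n_0})_{i_{k_0}i_0}.
\]

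Next I would substitute the explicit matrix entries. Introducing $\epsilon_k=-1$ when $\phi_k=L$ and $\epsilon_k=+1$ when $\phi_k=R$ merges the formulas for $L_{u_kv_k}$ and $R_{u_kv_k}$ into the single expression $(\Lambda_k)_{ij}=\frac{\QDL^q(u_k,v_k\vbar 2j)}{|D^q(u_k)|^{1/n}\sqrt n}\,q^{\epsilon_k(i^2+j^2-4ij)+2j^2+i-j}$, while $(T_{l_0m_0n_0})_{ij}=q^{2j(n_0-m_0)}\delta_{i,j+l_0}$. The Kronecker delta forces $i_{k_0}=i_0+l_0$; using it to eliminate $i_0$ (i.e. substituting $i_0=i_{k_0}-l_0$) reduces the sum to one over $i_1,\dots,i_{k_0}$ of $\prod_{k=1}^{k_0}\QDL^q(u_k,v_k\vbar 2i_k)$ times a single power of $q$ and the real positive prefactor $n^{-k_0/2}\prod_k|D^q(u_k)|^{-1/n}$, which is kept.

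The core of the argument is then to collect the exponents of $q$. The quadratic part, after the substitution $i_0=i_{k_0}-l_0$, is $\sum_{k=1}^{k_0}\epsilon_k(i_{k-1}^2+i_k^2-4i_{k-1}i_k)+2\sum_{k=1}^{k_0}i_k^2$; a cyclic re-indexing with $\epsilon_{k_0+1}=\epsilon_1$ and $i_{k_0+1}=i_1$ rewrites it as $\sum_{k=1}^{k_0}i_k^2(\epsilon_k+\epsilon_{k+1}+2)-4\sum_{k=1}^{k_0}\epsilon_{k+1}i_ki_{k+1}$. The linear part, arising from $i_0-i_{k_0}+2i_0(n_0-m_0)$ together with the cross terms produced by $i_0=i_{k_0}-l_0$, contributes exponents in $i_1$ and $i_{k_0}$ with coefficients $4\epsilon_1l_0$ and $2(-\epsilon_1l_0-m_0+n_0)$, plus a constant term $q^{l_0(\epsilon_1l_0+2m_0-2n_0-1)}$ of modulus $1$ that is absorbed into the scalar ambiguity. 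Finally, Lemma~\ref{lem:CorrectionFactors} allows us to take $l_0=\widehat l_0\tfrac{(n-1)^2}4$, $m_0=\widehat m_0\tfrac{(n-1)^2}4$, $n_0=\widehat n_0\tfrac{(n-1)^2}4$; applying $q^{(n-1)^2}=q$ (valid since $q^n=1$) turns $4\epsilon_1l_0$ into $\epsilon_1\widehat l_0$ and $2(-\epsilon_1l_0-m_0+n_0)$ into $-\epsilon_1\widehat l_0-\widehat m_0+\widehat n_0$, which is even because $\widehat l_0+\widehat m_0+\widehat n_0=0$. Collecting everything gives the displayed formula, with the $\QDL^q$ and $D^q$ expansions following directly from their definitions in \S\ref{subsect:DiscreteQuantumDilog} and \S\ref{subsect:LeftRightIntertwiners}.

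I expect the only genuine difficulty to be this $q$-exponent bookkeeping: keeping the substitution $i_0\mapsto i_{k_0}-l_0$ and the cyclic conventions $\epsilon_{k_0+1}=\epsilon_1$, $i_{k_0+1}=i_1$ consistent, and checking carefully that every scalar factor that is discarded (the constant power of $q$, the determinant corrections from Lemmas~\ref{lem:LeftRightIntertwinerDeterminant} and \ref{lem:TwistIntertwiner}) really does have modulus $1$, so that the equality "up to a scalar of modulus $1$" is honest. Everything else is a direct citation of Proposition~\ref{prop:IntertwinerCompElementaryIntertwiner}, Lemma~\ref{lem:CorrectionFactors}, and the matrix-entry formulas of \S\ref{subsect:LeftRightIntertwiners}--\S\ref{subsect:TwistIntertwiner}.
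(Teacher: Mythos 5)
Your proposal is correct and tracks the paper's own proof line for line: citing Proposition~\ref{prop:IntertwinerCompElementaryIntertwiner} (via Lemma~\ref{lem:ConnectViewpoints} to normalize $h$), expanding the trace of the product, substituting the explicit matrix entries with the $\epsilon_k$ unification, using the Kronecker delta from $T_{l_0m_0n_0}$ to eliminate $i_0$, recombining the quadratic exponent cyclically, and finally applying Lemma~\ref{lem:CorrectionFactors} with $q^{(n-1)^2}=q$. The paper's \S\ref{subsect:ExplicitComputation} carries out exactly this bookkeeping (it even isolates the same modulus-one constant $q^{l_0(\epsilon_1 l_0 + 2m_0 - 2n_0 - 1)}$ that you identify as absorbable), so there is nothing to add or correct.
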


\subsection{A few examples} 
\label{subsect:ExamplesComputation}

The algorithm developed in \S\S \ref{subsect:IntertwinerCompElementaryIntertwiner}--\ref{subsect:ExplicitComputation} has many moving pieces. It is probably useful to illustrate its implementation, and its subtleties, by applying it to a few examples. The following ones all correspond to the diffeomorphism  $\phi = LLR= \left(\begin{smallmatrix} 3&2\\1&1\end{smallmatrix}\right)$, and actually correspond to the three cases that we encountered in the numerical experiments of \S \ref{sect:Experimental}. 

Since the diffeomorphism $\phi$ is equal to $LLR$ in all three cases, we have that $k_0=3$, $\epsilon_1= -1$, $\epsilon_2=-1$ and $\epsilon_3=+1$. Therefore,
\begin{equation}
\label{eqn:ExampleLLR}
\begin{aligned}
\Tr \Lambda_{\phi, r}^q
&=\frac1
{n^{\frac {3}2} \left| D^q(u_1) \right|^{\frac1n}  \left| D^q(u_2) \right|^{\frac1n}  \left| D^q(u_3) \right|^{\frac1n} }
\\
&\qquad
\sum_{i_1,\, i_2, \, i_3 =1}^n
\QDL^q(u_1, v_1 \vbar  2i_1)\QDL^q(u_2, v_2 \vbar  2i_2)\QDL^q(u_3, v_3 \vbar  2i_3) \\
&\qquad\qquad \qquad\qquad\qquad q^{ 2 i_2^2 + 2 i_3^2 + 4 i_1 i_2 -4 i_2 i_3 + 4 i_3 i_1  }
  q^{- \widehat l_0 i_1  + \frac{ \widehat l_0 -\widehat m_0 +\widehat n_0}2 i_{3} }
\end{aligned}
\end{equation}
where the numbers $u_k$, $v_k\in \C$ and the integers $\widehat l_0$, $\widehat m_0$, $\widehat n_0\in \Z$ are determined by the algebraic data and by choices of logarithms. We now explain how these parameters are chosen, according to the case considered. 

First of all, since $\phi = LLR$, the edge weights associated to the corresponding ideal triangulation sweep are such that
\begin{align*}
 a_1 &= b_0^{-1}
 &
 b_1 &= (1+b_0)^2 a_0
 &
 c_1 &= (1+b_0)^{-2}b_0^2 c_0
 \\
  a_2 &= b_1^{-1}
 &
 b_2 &= (1+b_1)^2 a_1
 &
 c_2 &= (1+b_1)^{-2}b_1^2 c_1
 \\
  a_3 &= c_2^{-1}
 &
 b_3 &= (1+c_2)^2 b_2
 &
 c_3 &= (1+c_2)^{-2}c_2^2 a_2
 \\
 a_0&=a_3
 &
 b_0 &= b_3
 &
 c_0 &= c_3.
\end{align*}

In general, we can only rely on numerical solutions to this system of equations, but it turns out that this example is still sufficiently small that we can explicitly find all solutions by an elementary algebraic manipulation. Namely, the initial edge weights $(a_0, b_0, c_0)$ of these solutions are of the form
\begin{align*}
 b_0 &= -\frac{2 a_0+1}{2( a_0 + 1)} \pm \I \frac{\sqrt{8 a_0^2  + 11 a_0 + 4}}{2(a_0 + 1)\sqrt{a_0}} 
 \text{ or }
-\frac{2 a_0+3}{2( a_0 + 1)} \pm \I \frac{\sqrt{3 a_0 + 4}}{2(a_0 + 1)\sqrt{a_0}} 
 \\
 c_0 &=  \frac{(1 + a_0 (1 + b_0)^2)^2}{a_0^3 b_0^2 (1 + b_0)^2 },
\end{align*}
excluding in each case finitely many values for $a_0$ to guarantee that all edge weights $a_k$, $b_k$, $c_k$ are different from 0. In particular, the space of solutions consists of two connected complex curves in $\big( \C^* \big)^3$, which are disjoint by another computation. 

The hyperbolic character  $[\bar r_\hyp] \in \XP(S_{1,1})$ corresponding to the monodromy of the complete hyperbolic metric of the mapping torus $M_\phi$ is associated to the initial edge weight system $(a_0, b_0, c_0) = \left( \frac{-1-\I \sqrt 7}4, \frac{-3 + \I \sqrt 7}2, \frac {5-\I \sqrt 7}8 \right)$, which is in the curve defined by the first solution for $b_0$

\medskip\noindent\textsc{Example 1.} We consider the case where \nolinebreak
\begin{align*}
a_0 &=  -0.75 - 0.1 \,\I
\\
 b_0 &= -\frac{2 a_0+1}{2( a_0 + 1)} - \I \frac{\sqrt{8 a_0^2  + 11 a_0 + 4}}{2(a_0 + 1)\sqrt{a_0}} 
\approx 1.51712 + 1.20930 \,\I
 \\
 c_0 &=  \frac{(1 + a_0 (1 + b_0)^2)^2}{a_0^3 b_0^2 (1 + b_0)^2 }
 \approx  -2.25462 + 0.617377\, \I
\end{align*}
(where $\sqrt z$ is the usual complex square root, with $-\frac\pi2 < \Im \sqrt \leq \frac\pi2$). This is the example considered for the numerical data of Figure~\ref{fig:LLR}, largely selected for aesthetic reasons. It defines a $\phi$--invariant hyperbolic character $[\bar r] \in \XP(S_{1,1})$ which is in the same component of the fixed point set of the action of $\phi$ as the hyperbolic character $[\bar r_\hyp]$. In particular, it lifts to a character $[r] \in \XX(S_{1,1})$, and in facts admits exactly two such lifts since the fixed point set of the action of $\phi$ on $H^1(S_{1,1}; \Z/2)$ is isomorphic to $\Z/2$. 

The first diagram of Figure~\ref{fig:LLR} corresponds to the case where, for the standard complex logarithm function $\log$,  the puncture invariant $\theta_v$ is equal to
$$
\theta_v = \log a_0 + \log b_0 + \log c_0. 
$$
Then, if we follow the algorithm of \S \ref{subsect:CorrectionFactors}, we can choose
\begin{align*}
 A_0 &= \log a_0 
 &
 B_0 &= \log b_0 
 &
 C_0 &= \log c_0 
  \\
 & \approx -0.278871 - 3.00904 \,\I
 &
 & \approx 0.662750 + 0.672970 \,\I
 &
 & \approx 0.849133 + 2.87432 \,\I
 \\
 A_1 &= - B_0
 &
 B_1 &= 2 \log(1+a_1^{-1}) + A_0
 &
 C_1 &= - 2 \log(1+a_1^{-1}) + 2 B_0 + C_0
 \\
 A_2 &= - B_1
 &
 B_2 &= 2 \log(1+a_2^{-1}) + A_1
 &
 C_2 &= - 2 \log(1+a_2^{-1}) + 2 B_1 + C_1
 \\
 A_3 &= - C_2
 &
 B_3 &= 2 \log(1+a_3^{-1}) + B_2
 &
 C_3 &= - 2 \log(1+a_3^{-1}) + 2 C_2 + A_2
 \\
  & \approx -0.278871 - 3.00904 \,\I
 &
 & \approx 0.662750 + 0.672970 \,\I
 &
 & \approx 0.849133 + 2.87432 \,\I
\end{align*}
by systematically selecting $V_k = \log(1+a_k^{-1})$. 

Since we abstractly know that 
\begin{align*}
A_0 &=  A_3 + 2\pi \I\, \widehat l_0
 &
B_0 &= B_3  + 2\pi \I\, \widehat m_0
 &
C_0 &=  C_3 + 2\pi \I\, \widehat n_0.
\end{align*}
for some integers $\widehat l_0$, $\widehat m_0$, $\widehat n_0 \in \Z$, we numerically see that $\widehat l_0 = \widehat m_0 = \widehat n_0=0$. 

Therefore, in this example, the trace of the intertwiner $\Lambda_{\phi, r}^q$ is obtained by setting
\begin{align*}
 u_1 &= \exp   \frac{2\pi\I- A_1}n  
 &
 u_2 &=\exp   \frac{2\pi\I- A_2}n 
 &
 u_3 &=\exp \frac{2\pi\I- A_3}n 
 \\
 v_1 &= \exp \frac{\log(1+ a_1^{-1})}n
 &
 v_2 &= \exp \frac{\log(1+ a_2^{-1})}n
 &
 v_3 &= \exp \frac{\log(1+ a_3^{-1})}n
 \\
 \widehat l_0 &=0
 &
 \widehat m_0 &=0
 &
 \widehat n_0 &=0
\end{align*}
in Equation~(\ref{eqn:ExampleLLR}).

\medskip\noindent\textsc{Example 2.} In the previous example, we can  replace the puncture weight $\theta_v$ by 
$$
\theta'_v = \theta_v + 2 \pi \I \eta
$$
for some integer $\eta \in \Z$, while keeping the same periodic edge weight system $(a_0, b_0, c_0)$, $(a_1, b_1, c_1)$, $(a_2, b_2, c_2)$, $(a_3, b_3, c_3)=(a_0, b_0, c_0)$ for the ideal triangulation sweep. For instance, the second diagram of Figure~\ref{fig:LLR} corresponds to the case where $\eta=1$.  
  
Then, we can replace the logarithms $A_0$, $B_0$, $C_0$ of the previous case with 
\begin{align*}
 A_0' &= A_0 + 2 \pi\I \eta
 &
 B_0' &= B_0
 &
 C_0' &= C_0 
\end{align*}
in order to satisfy the relation $A_0'+B_0'+C_0' = \theta_v'$. The algorithm of \S \ref{subsect:CorrectionFactors} then gives
\begin{align*}
  A_1' &= - B_0' 
 &
 B_1' &= 2 \log(1+a_1^{-1}) + A_0' 
 &
 C_1' &= - 2 \log(1+a_1^{-1}) + 2 B_0' + C_0' 
 \\
   &=  A_1
 &
&=B_1 + 2 \pi\I \eta
 &
 &= C_1
 \\
 A_2' &= - B_1' 
 &
 B_2' &= 2 \log(1+a_2^{-1}) + A_1' 
 &
 C_2' &= - 2 \log(1+a_2^{-1}) + 2 B_1' + C_1' 
  \\
&= A_2 -  2 \pi\I \eta
 &
&=  B_2
 &
 &=  C_2 + 4 \pi\I \eta
 \\
 A_3' &= - C_2' 
 &
 B_3' &= 2 \log(1+a_3^{-1}) + B_2' 
 &
 C_3' &= - 2 \log(1+a_3^{-1}) + 2 C_2' + A_2' 
 \\
  &= A_3- 4\pi\I \eta
 &
&= B_3
 &
 &= C_3 + 6\pi\I\eta. 
\end{align*}

It follows that in this more general example the parameters $u_k$, $v_k$, $ \widehat l_0$, $ \widehat m_0$ and  $ \widehat n_0$ 
get replaced by
\begin{align*}
 u_1' &= \exp   \frac{2\pi\I- A_1'}n  = u_1
 &
 u_2' &=\exp   \frac{2\pi\I- A_2'}n = q^{-\eta} u_2
 &
 u_3' &=\exp \frac{2\pi\I- A_3'}n = q^{2\eta} u_3 
 \\
 v_1' &= \exp \frac{\log(1+ a_1^{-1})}n = v_1
 &
 v_2' &= \exp \frac{\log(1+ a_2^{-1})}n = v_2
 &
 v_3' &= \exp \frac{\log(1+ a_3^{-1})}n = v_3
 \\
 \widehat l_0' &=3 \eta
 &
 \widehat m_0' &=0
 &
 \widehat n_0' &=-3\eta. 
\end{align*}

\medskip\noindent\textsc{Example 3.} This time, we keep $\phi=LLR$ and the corresponding ideal triangulation sweep, but we choose a periodic edge weight system for the sweep whose initial edge weight system $(a_0'', b_0'', c_0'')$ is given by
\begin{align*}
a_0'' &=  1+  \,\I
\\
 b_0'' &= -\frac{2 a_0''+3}{2( a_0'' + 1)} - \I \frac{\sqrt{3 a_0'' + 4}}{2(a_0'' + 1)\sqrt{a_0''}} 
\approx -1.51564 - 0.311861 \,\I
 \\
 c_0'' &=  \frac{(1 + a_0'' (1 + b_0'')^2)^2}{a_0^{\prime\prime3} b_0^{\prime\prime2} (1 + b_0'')^2 }
 \approx  -0.367012 - 0.130244 \,\I
\end{align*}
A subtlety here is that it can be shown that the character $[\bar r ] \in \XP(S_{1,1})$ defined by this edge weight system does not lift to a $\phi$--invariant $\SL$--character $[r] \in \XX(S_{1,1})$. What is given by the formula (\ref{eqn:ExampleLLR}) is therefore, in this case, the trace of the intertwiner $\bar \Lambda_{\phi, \bar r}^q$ associated by Proposition~\ref{prop:QuantumTeichmullerIntertwinerDefined} to the puncture invariant $h_v = \E^{\frac2n \theta_v}$, where $\theta_v$ is chosen so that $\E^{\theta_v}=a_0'' b_0'' c_0''$. 

	This is the example illustrated in Figure~\ref{fig:LLRother}, for $\theta_v = \log a_0'' + \log b_0'' + \log c_0''$. Then, if we start with 
\begin{align*}
 A_0'' &= \log a_0 ''
 &
 B_0'' &= \log b_0'' 
 &
 C_0'' &= \log c_0'' 
  \\
 & \approx 0.346574 + 0.785398 \,\I
 &
 & \approx 0.436571 - 2.93866 \,\I
 &
 & \approx -0.943051 - 2.80058 \,\I,
\end{align*}
the same computations as above now lead us to
\begin{align*}
  A_3'' &\approx 0.346574 + 13.3518 \,\I
 &
 B_3'' &\approx 0.436571 + 3.34452 \,\I
 &
 C_3'' &\approx -0.943051 - 21.6501 \,\I,
\end{align*}
and the correction factors are therefore
\begin{align*}
 \widehat l_0'' &= -2
 &
 \widehat m_0'' &= -1
 &
 \widehat n_0'' &= + 3
\end{align*}

At this point, it may be hard to understand why the algebraic data of this example is so different from that of Examples 1 and 2 that, as observed in \S \ref{sect:Experimental}, it leads to fundamentally distinct asymptotic  behavior for the trace  of the associated intertwiner $\Lambda_{\phi, \bar r}^q$. The cancellations that occur in this case, and not in the two other examples, will be explained in \cite{BWY3}, where a specific criterion is developed. In the particular case of the diffeomorphism $\phi = LLR$,  this criterion is based on the parity of $\frac{\widehat l_0 - \widehat m_0 + \widehat n_0}2$.

\section{The case of a general surface}
\label{sect:ComputeIntertwinerGeneralSurf}

In addition to the fact that they are used in \cite{BWY2, BWY3}, our computations in the case of the one-puncture torus were meant as a demonstration project to show the effectiveness of the methods of \S \ref{sect:ComputeIntertwinerWithCheFock} to compute the intertwiner $\Lambda_{\phi, r}^q$ of Proposition~\ref{prop:KauffmanIntertwinerDefined}. The same methods can be implemented for a general surface, but at the expense of greatly increased computational complexity. We briefly describe how. 

Our data is an orientation-preserving diffeomorphism $\phi \colon S \to S$ of a surface $S$ with genus $g$ and $p$ punctures. We are also given a $\phi$--invariant character $[r]\in \XX(S)$  associated, as in \S \ref{subsect:SweepInvariantChar}, to an edge weight system for an ideal triangulation sweep, and $\phi$--invariant puncture weights as in Theorem~\ref{thm:ClassicalShadow}. We want to compute the  intertwiner $\Lambda_{\phi, r}^q$ associated by Proposition~\ref{prop:KauffmanIntertwinerDefined} to this data, by relying on Theorem~\ref{thm:CheFockAndKauffmanIntertwiners} and computing instead the intertwiner $\bar \Lambda_{\phi, r}^q$ associated  to suitably compatible data by Proposition~\ref{prop:QuantumTeichmullerIntertwinerDefined}. 

Our strategy requires some preparation for each surface $S$, after which it is easily implemented for each diffeomorphism $\phi \colon S \to S$ and each ideal triangulation sweep $\tau = \tau^{(0)}$, $ \tau^{(1)}$, \dots, $ \tau^{(k_0-1)}$, $ \tau^{(k_0)}=\phi(\tau)$. We divide this implementation into several steps.

For this, we fix a surface $S$ with genus $g$ and $p$ punctures. In particular, every ideal triangulation of $S$ has $e= 6g+3p-6$ edges. 

\medskip
\noindent\textsc{Step 1.} The first observation is that, up to orientation-preserving diffeomorphism, the surface $S$ admits only finitely many ideal triangulations $\tau$, which consequently gives us finitely many Chekhov-Fock algebras $\T_\tau^q(S)$. For each such ideal triangulation $\tau$, one needs a practical implementation of Proposition~\ref{prop:RepsCheFock}, associating a representation $\rho \colon \T_\tau^q(S) \to \End(\C^{n(3g+p-3)})$ to the data of  edge weights $a_i \in \C^* $  and compatible puncture weights $h_v \in \C^*$ for $\tau$. 

In practice, this requires the effective block diagonalization over the integers of the antisymmetric bilinear form $\Z^e \times \Z^e \to \Z$ defined by the matrix whose entries are the coefficients $\sigma_{ij} \in \{ -2, -1, 0, 1, 2\}$ occurring in the $q$--commutativity relations $X_iX_j = q^{2ij} X_jX_i$ defining the Chekhov-Fock algebra $\T_\tau^q$. Proposition~5 of \cite{BonLiu} predicts that there will be 
 $g $ blocks 
$\left(
\begin{smallmatrix}
0&-2\\2&0
\end{smallmatrix}
 \right)$,
 $ 2g+p-3$ blocks 
$\left(
\begin{smallmatrix}
0&-1\\1&0
\end{smallmatrix}
 \right)$
 and
 $p $ blocks 
$\left(
\begin{smallmatrix}
0
\end{smallmatrix}
 \right)$.
 In practice, this block diagonalization is easily obtained by the usual Gram-Schmidt methods. 
 
 Once the $q$--commutativity relations are block diagonalized, the practical implementation of Proposition~\ref{prop:RepsCheFock} involves a choice of $n$--roots $a_i^{\frac1n}$ for the edge weights $a_i \in \C^*$ constrained by the puncture weights $h_v$ in the sense that $h_v = a_{i_1}^{\frac1n} a_{i_2}^{\frac1n} \dots a_{i_k}^{\frac1n}$ whenever the puncture $v$ is adjacent to the edges $\gamma_{i_1}$, $\gamma_{i_2}$, \dots, $\gamma_{i_k}$. The construction associates to the $n$--roots $a_i^{\frac1n}$ a ``standard'' representation $\rho \colon \T_\tau^q(S) \to \End(\C^{n(3g+p-3)})$. 
 
 In our discussion of the case of the one-puncture torus, this step essentially corresponds to Proposition~\ref{prop:StandardRep}. In the general case as in this special case, the image $\rho(X_i) \in  \End(\C^{n(3g+p-3)})$ of each generator $X_i$ of $\T_\tau^q(S)$ is obtained by multiplying a fixed matrix (depending only on $q$ and on the combinatorics of $\tau$) by a certain monomial in the $n$--roots $a_i^{\frac1n}$. 
 
  \medskip
\noindent\textsc{Step 2.} The previous step associates a representation $\rho \colon \T_\tau^q(S) \to \End(\C^{n(3g+p-3)})$ to each suitable choice of $n$--roots $a_i^{\frac1n}$. Another choice of $n$--roots will be of the form $q^{m_i}a_i^{\frac1n}$ for  integers $m_i \in \Z$, and will define a representation $\rho' \colon \T_\tau^q(S) \to \End(\C^{n(3g+p-3)})$ that is isomorphic to $\rho$ by an isomorphism $L_{m_1m_2\dots m_e} \colon \C^{n(3g+p-3)} \to \C^{n(3g+p-3)}$. Namely, 
$$\rho(W) = L_{m_1m_2\dots m_e} \circ \rho'(W) \circ L_{m_1m_2\dots m_e}^{-1} \in\End(\C^{n(3g+p-3)})$$ 
for every $W \in \T_\tau^q(S)$. 

Once we are given the block diagonalization of the previous step, the isomorphism $ L_{m_1m_2\dots m_e} $ can be easily computed (up to scalar multiplication) and looks very much like the twist isomorphism $T_{l_0m_0n_0}$ of \S \ref{subsect:TwistIntertwiner}. In particular, when $S$ is the one-puncture torus, the isomorphism $ L_{m_1 m_2m_3} $ can be chosen equal to $ T_{m_1' m_2'm_3'} $ with $m_i = 4 m_i' \negthickspace \mod n$. 

Choose a normalization  such that $\left| \det  L_{m_1m_2\dots m_e} \right|=1$. 
 
 \medskip
\noindent\textsc{Step 3.} For a given ideal triangulation $\tau$, there are finitely many ideal triangulations $\tau'$ that are obtained from $\tau$ by a diagonal exchange or an edge relabelling. We saw in \S \ref{subsect:SweepInvariantChar} that, at least under a genericity condition guaranteeing that we do not try to divide by 0 in the case of a diagonal exchange, an edge weight system $a \in (\C^*\big)^e$ for $\tau$ determines an edge weight  system $a' \in (\C^*\big)^e$ for $\tau'$ that is associated to the same character in $\XP(S)$. The gist of Proposition~\ref{prop:CheFockRepsCompatibleIdealTriangChange}, namely Lemma~27 of \cite{BonLiu}, asserts the following: If $\rho \colon \T_\tau^q(S) \to \End(\C^{n(3g+p-3)})$ and $\rho' \colon \T_{\tau'}^q(S) \to \End(\C^{n(3g+p-3)})$ are two standard representations respectively associated to systems of $n$--roots $a_i^{\frac1n}$ and $a_i^{\prime\frac1n}$ compatible with the same puncture invariants $h_v = h_v'$, then, for the Chekhov-Fock coordinate change $ \Phi_{\tau\tau'}^q  \colon \T_{\tau'}^q(S) \to \T_\tau^q(S)$, the representation $\rho \circ \Phi_{\tau\tau'}^q \colon \T_{\tau'}^q(S) \to \End(\C^{n(3g+p-3)})$ makes sense and is isomorphic to $\rho'$ by an isomorphism $\Lambda \colon \C^{n(3g+p-3)} \to \C^{n(3g+p-3)}$. More precisely, 
$$
\rho \circ \Phi_{\tau\tau'}^q (W) = \Lambda \circ \rho'(W') \circ \Lambda^{-1}  \in\End(\C^{n(3g+p-3)}) 
$$
for every $W' \in \T_{\tau'}^q(S)$. Note that, once we know it exists, the isomorphism $\Lambda$ can be explicitly determined by solving the finitely many linear equations $\Lambda \circ \big(\rho \circ \Phi_{\tau\tau'}^q (X_i') \big)= \Lambda \circ \rho' (X_i')$ given by the generators $X_i'$ of $ \T_{\tau'}^q(S)$. 

In this Step 3, we first select a rule that, given a choice of $n$--roots $a_i^{\frac1n}$ for an edge weight system $a \in \big( \C^* \big)^e$ for $\tau$, specifies $n$--roots $a_i^{\prime\frac1n}$ for the associated edge weight system $a' \in \big( \C^* \big)^e$ for $\tau'$, in such a way that these $n$--roots define the same puncture weights $h_v = h_v'$ in the sense that
$$
a_{i_1}^{\frac1n} a_{i_2}^{\frac1n} \dots a_{i_k}^{\frac1n}
=
a_{i_1'}^{\prime\frac1n} a_{i_2'}^{\prime\frac1n} \dots a_{i_{k'}'}^{\prime\frac1n}
\ (=h_v = h_v')
$$
whenever the puncture $v$ is adjacent to the edges $\gamma_{i_1}$, $ \gamma_{i_2}$,  \dots, $\gamma_{i_k} $ of $\tau $ and to the edges $\gamma_{i_1'}'$, $ \gamma_{i_2'}'$, \dots, $\gamma_{i_{k'}'}' $ of $\tau'$. This rule may not necessarily be continuous. 

Then, once the rule is fixed, $\rho \colon \T_\tau^q(S) \to \End(\C^{n(3g+p-3)})$ is the standard representation associated to the $n$--roots  $a_i^{\frac1n}$, and $\rho' \colon \T_{\tau'}^q(S) \to \End(\C^{n(3g+p-3)})$ is the standard representation associated to the $n$--roots $a_i^{\prime\frac1n}$, we select an isomorphism $\Lambda \colon \C^{n(3g+p-3)} \to \C^{n(3g+p-3)}$ such that $\rho \circ \Phi_{\tau\tau'}^q (W) = \Lambda \circ \rho'(W') \circ \Lambda^{-1}  $ for every $W' \in \T_{\tau'}^q(S)$ and normalized so that $\left| \det \Lambda \right|$. This $\Lambda$ will depend on the $n$--roots  $a_i^{\prime\frac1n}$.

Up to diffeomorphism, there are only finitely many pairs $(\tau, \tau')$ where the ideal triangulation $\tau'$ is obtained from $\tau$ by a diagonal exchange or by an edge relabelling. This Step~3 therefore provides finitely many linear isomorphisms $\Lambda \colon \C^{n(3g+p-3)} \to \C^{n(3g+p-3)}$, considered as functions of the $n$--roots $a_i^{\frac1n}$. 

In the case of the one-puncture torus, this Step~3 corresponds to Lemma~\ref{lem:LeftRightIntertwiners}. 

  \medskip
\noindent\textsc{Step 4.} The first three steps were preparatory, and depended only on the surface $S$. We are now ready to tackle our general goal, with the data of an orientation-preserving diffeomorphism $\phi \colon S \to S$ and a $\phi$--invariant character $[r]\in \XX(S)$  associated to a periodic edge weight system $a^{(0)}$, $a^{(1)}$, \dots,  $a^{(k_0)} \in \left( \C^* \right)^{e}$  for the ideal triangulation sweep $\tau = \tau^{(0)}$, $ \tau^{(1)}$, \dots, $ \tau^{(k_0-1)}$, $ \tau^{(k_0)}=\phi(\tau)$, and $\phi$--invariant puncture weights $h_v$ such that 
$$
h_v^n = a_{i_1}^{(0)} a_{i_2}^{(0)} \dots a_{i_l}^{(0)} 
$$
if the puncture $v$ is adjacent to the edges $\gamma_{i_1}$, $ \gamma_{i_2}$,  \dots, $\gamma_{i_k} $ of $\tau $. 

With this data, start with an arbitrary choice of $n$--roots $\left( a_i^{(0)} \right)^{\frac1n}$ for the edge weights of $\tau = \tau^{(0)}$. Then, for every $k=1$, $2$, \dots, $k_0$, the rule selected in Step~3 determines preferred $n$--roots $\left( a_i^{(k)} \right)^{\frac1n}$ for the edge weights of $\tau^{(k)}$. These $n$--roots specify a standard representation $\rho_k \colon \T_{\tau^{(k)}}^q(S) \to \End( \C^{n(3g+p-3)} )$ as in Step~1, and Step~3 provides a linear isomorphism $\Lambda \colon \C^{n(3g+p-3)} \to \C^{n(3g+p-3)}$ between the representations $\rho_{k-1} \circ \Phi^q_{\tau^{(k-1)} \tau^{(k)}}$ and $\rho_k \colon \T_{\tau^{(k)}}^q(S) \to \End( \C^{n(3g+p-3)} )$. 

Since the edge weight system is periodic, each edge weight $ a_i^{(k_0)} $ is equal to $ a_i^{(0)} $. However, because the $n$--roots were selected by repeated application of Step~3, we only know that there exists integers $m_i \in \Z$ such that $\left( a_i^{(0)} \right)^{\frac1n} = q^{m_i} \left( a_i^{(k_0)} \right)^{\frac1n}$. Step~2 associates an isomorphism $L_{m_1m_2\dots m_e} \colon \C^{n(3g+p-3)} \to \C^{n(3g+p-3)}$ to these integers. 

Finally, the argument that we used in the proof of Proposition~\ref{prop:IntertwinerCompElementaryIntertwiner} shows that the isomorphism $\Lambda_{\phi, r}^q$ associated to the data by Proposition~\ref{prop:KauffmanIntertwinerDefined} is equal to
$$
\Lambda_{\phi, r}^q = \Lambda_1 \circ \Lambda_2 \circ \dots \circ \Lambda_{k_0} \circ  L_{m_1m_2\dots m_e} 
$$
which concludes our computation.

\medskip
In its full generality, the method above involves the computation of a lot of data. In practice, it is often possible to reduce  this data by using various combinatorial arguments. 

For instance, in the case of the one-puncture torus, there exist two (labelled) ideal triangulations up to orientation-preserving diffeomorphism, but we were able to restrict attention to one type. Also, by combining each diagonal exchange with an edge relabelling, we only had to consider two moves, the ``left'' move and the ``right'' move. 

A very similar simplification can be used for the four-puncture sphere, if we restrict attention to ideal triangulations of tetrahedral type, where the edges can be labelled as $e$, $f$, $g$, $e'$, $f'$, $g'$ in such a way that $e$, $f$, $g$ occur clockwise around a face and $e'$, $f'$, $g'$ are respectively disjoint from $e$, $f$, $g$. (In this case, there are many more possible ideal triangulations.) In this setup, we can arrange that the diagonal exchanges occur along pairs of disjoint edges which, combined with suitable edge relabelling, again leads to the consideration of only a ``left'' and ``right'' move.

%%%%%%%%%%%%%%%%%%%%%%%%%%%

\bibliographystyle{amsalpha}
\bibliography{BWY1}
 
\end{document}